\documentclass[preprint,1p]{elsarticle}

\usepackage{amsfonts}
\usepackage{amsmath}
\usepackage{empheq} 
\usepackage{amsthm}
\usepackage[utf8]{inputenc}
\usepackage[english]{babel}
\usepackage{epstopdf}
\usepackage{bmpsize}
\usepackage{epsfig}
\usepackage{hyperref}
\usepackage[english]{varioref}
\usepackage{amssymb}
\usepackage{multicol}
\usepackage{dcolumn}
\usepackage{geometry}
\usepackage{fancyhdr}
\usepackage[mathcal]{eucal}
\usepackage{mathrsfs}
\usepackage{color, colortbl}
\usepackage{microtype}
\usepackage{longtable}
\usepackage[toc,page]{appendix}
\usepackage{bm}
\usepackage{pifont}
\usepackage{fleqn}
\usepackage{graphicx}
\usepackage{txfonts}

\allowdisplaybreaks
\DisableLigatures{encoding = *, family = * }

\numberwithin{equation}{section}
\pagestyle{myheadings}

\newcommand{\norm}[2]{{\left\|#1\right\|}_{#2}}

\newcommand{\intr}[1]{\underset{#1}{\int}}
\newcommand{\Do}[1]{D_{#1}}

\newcommand{\RR}{\mathbb{R}}

\newtheorem{theorem}{Theorem}[section]
\newtheorem{proposition}{Proposition}[section]
\newtheorem{lemma}{Lemma}[section]

\title{Null controllability for a heat equation with a singular inverse-square potential involving the distance to the boundary function}

\author[rvt]{Umberto Biccari \fnref{fn1}}
\ead{ubiccari@bcamath.org - u.biccari@gmail.com}

\author[rvt1]{Enrique Zuazua \fnref{fn2}}
\ead{enrique.zuazua@uam.es}

\fntext[fn1]{The work of this author was supported by the Grant FA9550-14-1-0214 of the EOARD-AFOSR, the MTM2014-52347 and SEV-2013-0323 Grants of the MINECO and the BERC 2014-2017 program of the Basque Government}
\fntext[fn2]{The work of this author was supported by the Grants FA9550-14-1-0214 of the EOARD-AFOSR, FA9550-15-1-0027 of AFOSR and MTM2014-52347 of the MINECO,  and  by a Humboldt Research Award at the University of Erlangen-N\"urnberg.}
\address[rvt]{BCAM - Basque Center for Applied Mathematics, Alameda de Mazarredo 14, 48009, Bilbao, Basque Country, Spain - ubiccari@bcamath.org - u.biccari@gmail.com} 
\address[rvt1]{Universidad Autonoma de Madrid - Departamento de Matemáticas, Campus de Cantoblanco, 28049, Madrid, Spain - enrique.zuazua@uam.es}

\bibliographystyle{abbrv}
\begin{document}

\begin{abstract}
This article is devoted to the analysis of control properties for a heat equation with singular potential $\mu/\delta^2$, defined on a bounded $C^2$ domain $\Omega\subset\RR^N$, where $\delta$ is the distance to the boundary function. More precisely, we show that for any $\mu\leq 1/4$ the system is exactly null controllable using a distributed control located in any open subset of $\Omega$, while for $\mu>1/4$ there is no way of preventing the solutions of the equation from blowing-up. The result is obtained applying a new Carleman estimate. 
\end{abstract}

\begin{keyword}
Heat equation\sep singular potential\sep null controllability\sep Carleman estimates
\\
\MSC[2010] 35K05, 93B05, 93B07
\end{keyword}

\maketitle

\section{Introduction and main results}\label{introduction}
Let $T>0$ and set $Q:=\Omega\times(0,T)$, where $\Omega\subset\RR^N$, $N\geq 3$, is a bounded and $C^2$ domain, and let $\Gamma:=\partial\Omega$. Moreover, let $\delta(x):=\textrm{dist}(x,\partial\Omega)$ be the distance to the boundary function. We are interested in proving the exact null controllability for a heat equation with singular inverse-square potential of the type $-\mu/\delta^2$, that is, given the operator 
\begin{align}\label{singular op}
\mathcal{A}=\mathcal{A}(\mu):=-\Delta-\frac{\mu}{\delta^2}\mathcal{I},\;\;\;\mu\in\RR,
\end{align} 
where $\mathcal{I}$ indicates the identical operator, we are going to consider the following parabolic equation
\begin{align}\label{heat hardy nh}
	\renewcommand*{\arraystretch}{1.3}
	\left\{\begin{array}{ll}
		\displaystyle u_t-\Delta u-\frac{\mu}{\delta^2}u=f, & (x,t)\in Q
		\\ u=0, & (x,t)\in\Gamma\times(0,T)
		\\ u(x,0)=u_0(x), & x\in\Omega,
	\end{array}\right.
\end{align}
with the intent of proving that it is possible to choose the control function $f$ in an appropriate functional space $X$ such that the corresponding solution of \eqref{heat hardy nh} satisfies 
\begin{align}\label{control result}
	u(x,T)=0,\;\;\textrm{ for all }x\in\Omega.
\end{align}
In particular, the main result of this paper will be the following.
\begin{theorem}\label{control thm}
	Let $\Omega\subset\RR^N$ be a bounded $C^2$ domain and assume $\mu\leq 1/4$. Given any non-empty open set $\omega\subset\Omega$, for any time $T>0$ and any initial datum $u_0\in L^2(\Omega)$, there exists a control function $f\in L^2(\omega\times(0,T))$ such that the solution of \eqref{heat hardy nh} satisfies \eqref{control result}.
\end{theorem}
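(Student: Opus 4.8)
The plan is to combine the standard duality characterization of null controllability with a Carleman inequality tailored to the boundary singularity. \emph{Step 1: reduction to observability.} Since $\mathcal{A}$ is self-adjoint, the adjoint system associated with the controlled equation \eqref{heat hardy nh} is the homogeneous problem
\[
\left\{\begin{array}{ll}
-v_t-\Delta v-\dfrac{\mu}{\delta^2}\,v=0, & (x,t)\in Q,\\[2mm]
v=0, & (x,t)\in\Gamma\times(0,T),\\[1mm]
v(x,T)=v_T(x), & x\in\Omega,\end{array}\right.
\]
and, by the Hilbert Uniqueness Method, Theorem~\ref{control thm} is equivalent to the observability inequality: there is $C=C(\Omega,\omega,T)>0$ with $\|v(\cdot,0)\|_{L^2(\Omega)}^2\le C\int_0^T\!\int_\omega|v|^2\,dx\,dt$ for every solution $v$ of the adjoint problem. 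For this to be meaningful one first needs well-posedness of the Cauchy problems in $L^2(\Omega)$: since $\Omega$ is $C^2$, the function $\delta$ is Lipschitz on $\overline\Omega$ and $C^2$ in a one-sided neighbourhood of $\Gamma$, with $|\nabla\delta|=1$ there, and the Hardy inequality $\int_\Omega|w|^2/\delta^2\,dx\le 4\int_\Omega|\nabla w|^2\,dx+C_\Omega\int_\Omega|w|^2\,dx$ on $H^1_0(\Omega)$ — with the \emph{sharp} constant $4$, and, in the critical regime, in its logarithmically improved form — shows that for $\mu\le 1/4$ the quadratic form $w\mapsto\int_\Omega(|\nabla w|^2-\mu|w|^2/\delta^2)\,dx$ is sectorial and bounded from below, so that $-\mathcal{A}$ generates an analytic semigroup and the direct and adjoint problems are well posed.

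\emph{Step 2: the Carleman estimate.} The core is a new global Carleman inequality for $Lv:=-v_t-\Delta v-\mu v/\delta^2$. I would use the Fursikov--Imanuvilov weights $\varphi(x,t)=\theta(t)\bigl(e^{\lambda\psi(x)}-e^{2\lambda\|\psi\|_\infty}\bigr)$ and $\xi(x,t)=\theta(t)e^{\lambda\psi(x)}$, $\theta(t)=[t(T-t)]^{-1}$, choosing the auxiliary function $\psi$ so that, besides the usual requirements ($\psi>0$ in $\Omega$, $\psi=0$ on $\Gamma$, $|\nabla\psi|$ bounded away from $0$ off a compact subset of $\omega$), it is comparable to $\delta$ near $\Gamma$ — the $C^2$ regularity of $\Omega$ being exactly what makes such a $\psi$ admissible. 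Setting $w=e^{s\varphi}v$, decomposing the conjugated operator $e^{s\varphi}L(e^{-s\varphi}\,\cdot\,)$ into its self-adjoint and skew-adjoint parts $P_+$ and $P_-$ (the singular potential $-\mu/\delta^2$ falling into the self-adjoint zeroth-order part), expanding $\|P_+w+P_-w\|_{L^2(Q)}^2$ and integrating by parts, one must show that — beyond the classical positive terms — the cross products involving $-\mu w/\delta^2$ can be handled so as to yield, schematically, an estimate of the form
\[
\int_Q e^{2s\varphi}\Bigl((s\xi)^{-1}(|v_t|^2+|\Delta v|^2)+s\lambda^2\xi|\nabla v|^2+s^3\lambda^4\xi^3|v|^2+s\lambda^2\xi\,\tfrac{|v|^2}{\delta^2}\Bigr)\,dx\,dt\ \le\ C\Bigl(\int_Q e^{2s\varphi}|Lv|^2\,dx\,dt+\int_{\omega\times(0,T)}e^{2s\varphi}s^3\lambda^4\xi^3|v|^2\,dx\,dt\Bigr)
\]
for $\lambda\ge\lambda_0$ and $s\ge s_0(\lambda)$. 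The new feature is the extra term $\int_Q e^{2s\varphi}s\lambda^2\xi|v|^2/\delta^2$ on the left: it has to be produced by a careful treatment of the cross products of $-\mu w/\delta^2$ with the first- and second-order terms, combined with a weighted Hardy inequality, and the whole procedure goes through precisely when $\mu\le 1/4$ — this is where the sharp Hardy constant $4$ enters and pins down the critical value.

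\emph{Step 3: boundary terms and conclusion.} Two points require care. First, because the potential blows up at $\Gamma$, the integrations by parts cannot be carried out on $Q$; one works on $Q_\varepsilon:=\{x\in\Omega:\delta(x)>\varepsilon\}\times(0,T)$ and lets $\varepsilon\to0$. The delicate step is that the singular interior integrals and the surface integrals over $\{\delta=\varepsilon\}$ both behave like $\varepsilon^{2\gamma-2}$ — with $\gamma=\tfrac12(1+\sqrt{1-4\mu})\in[\tfrac12,1)$ for $0<\mu\le\tfrac14$ the exponent governing the behaviour $v\sim\delta^\gamma$ of finite-energy solutions near $\Gamma$ — and must be shown to compensate exactly, leaving a finite estimate with the right signs; in the critical case $\mu=1/4$ this forces the use of the logarithmically improved Hardy inequality. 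Second, once the Carleman estimate is available, the observability inequality follows in the standard way: restrict the estimate to $\Omega\times(T/4,3T/4)$, where $e^{2s\varphi}$, $\xi$ and $\theta$ are bounded above and below and $Lv=0$; keep only the $s^3\lambda^4\xi^3|v|^2$ term on the left; use the dissipativity of the semigroup to bound $\|v(\cdot,0)\|_{L^2(\Omega)}$ by $\|v(\cdot,t)\|_{L^2(\Omega)}$ for $t\in(T/4,3T/4)$; and integrate in $t$. Feeding this back into the HUM duality produces the control $f\in L^2(\omega\times(0,T))$ and proves Theorem~\ref{control thm}. I expect the genuine obstacle to be Steps 2 and 3, i.e.\ the Carleman estimate itself, and within it the simultaneous control of the singular interior terms and of the surface terms on $\{\delta=\varepsilon\}$ — it is precisely there, and in the absorption of the singular zeroth-order term, that the restriction $\mu\le 1/4$ is unavoidable.
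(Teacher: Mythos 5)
Your overall architecture (HUM reduction to observability, a global Carleman estimate, dissipativity in time, back to the control via minimization of a quadratic functional) coincides with the paper's, and your Steps 1 and the final duality argument are fine. The genuine gap is in Step 2, which is the heart of the matter and where the specific route you propose would fail. You take the classical Fursikov--Imanuvilov weight $\varphi=\theta(e^{\lambda\psi}-e^{2\lambda\|\psi\|_\infty})$ with $\psi=0$ on $\Gamma$ and $\psi\sim\delta$ near $\Gamma$. With such a weight the conjugation produces, among the cross products with the singular potential, a term proportional to $(\nabla\delta\cdot\nabla\varphi)\,w^2/\delta^3$; since $\nabla\varphi\sim\lambda\theta e^{\lambda\psi}\nabla\psi$ with $|\nabla\psi|\sim 1$ up to the boundary, this is an uncontrollable $\delta^{-3}$ singularity that no Hardy-type inequality (which only reaches $\delta^{-2}$) can absorb. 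This is precisely the obstruction discussed in Section \ref{carleman sec} of the paper, and it is why the weight must be modified to $\sigma=\theta\,(C_{\lambda}-\delta^2\psi-(\delta/r_0)^{\lambda}\phi)$ as in \eqref{sigma}, with $\psi$ \emph{constant and equal to $1$ on $\Gamma$} (see \eqref{psi_prop}), so that $\nabla\sigma$ vanishes to order $\delta$ (respectively $\delta^{\lambda-1}$) at the boundary and the dangerous terms become $O(\delta^{-2})$ or better. Your choice $\theta=[t(T-t)]^{-1}$ is also insufficient: the paper needs the exponent $k=3$ to absorb the time-derivative contributions (Lemma \ref{I_r lemma} and the remark at the end of its proof). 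So the weight you specify is exactly the one the argument is built to avoid, and the claim that the cross products ``can be handled'' is the entire content of the proof, not a step you have supplied.

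Two further points. First, your treatment of the boundary layer (truncating at $\{\delta=\varepsilon\}$ and claiming that interior and surface contributions of order $\varepsilon^{2\gamma-2}$ compensate exactly) is speculative and not what happens here: with the weight \eqref{sigma} the boundary term vanishes identically because $\partial_n\sigma=0$ on $\Gamma$ (Lemma \ref{I_bd lemma}), and the lack of $H^2$ regularity is handled by regularizing the potential to $\mu/(\delta+1/n)^2$ and passing to the limit, not by excising a neighbourhood of $\Gamma$. Second, the Carleman estimate one actually obtains carries a local gradient observation term $\int_{\omega_0\times(0,T)}\theta(\delta/r_0)^{\lambda}\phi\, e^{-2R\sigma}|\nabla v|^2$ on the right-hand side, which must then be absorbed by a Caccioppoli inequality (Lemma \ref{caccioppoli lemma}) before the observability inequality \eqref{observ ineq} follows; your schematic estimate, with only the local $s^3\lambda^4\xi^3|v|^2$ term, skips this step. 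You correctly identify where the difficulty lies, but the concrete ingredients you propose for overcoming it are the ones that do not work.
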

\indent The upper bound for the coefficient $\mu$ is related to a generalisation of the classical Hardy-Poincar\'e presented in \cite{brezis1997hardy} and plays a fundamental role in our analysis. Indeed, in \cite{cabre1999existence} is shown that, for $\mu>1/4$, \eqref{heat hardy nh} admits no positive weak solution for any $u_0$ positive and $f=0$. Moreover, there is instantaneous and complete blow-up of approximate solutions. 
\\ 
\\
\indent As it is by now classical, for proving Theorem \ref{control thm} we will apply the Hilbert Uniqueness Method (HUM, \cite{lions1988controlabilite}); hence the controllability property will be equivalent to the observability of the adjoint system associated to \eqref{heat hardy nh}, namely 
\begin{align}\label{heat hardy adj}
	\renewcommand*{\arraystretch}{1.3}	
	\left\{\begin{array}{ll}
		\displaystyle v_t+\Delta v+\frac{\mu}{\delta^2}v=0, & (x,t)\in Q
		\\ v=0, & (x,t)\in\Gamma\times (0,T)
		\\ v(x,T)=v_T(x), & x\in\Omega.
	\end{array}\right.
\end{align}
\indent More in details, for any $\mu\leq 1/4$ we are going to prove that there exists a positive constant $C_T$ such that, for all $v_T\in L^2(\Omega)$, the solution of \eqref{heat hardy adj} satisfies 
\begin{align}\label{observ ineq}
	\intr{\Omega} v(x,0)^2\,dx\leq C_T\intr{\omega\times(0,T)}v(x,t)^2\,dxdt.
\end{align} 
\\
\indent The inequality above, in turn, will be obtained as a consequence of a Carleman estimate for the solution of \eqref{heat hardy adj}, which is derived taking inspiration from the works \cite{cazacu2014controllability} and \cite{ervedoza2008control}.
\\
\indent Furthermore, the bound $\mu\leq 1/4$ is sharp for our controllability result, as we are going to show later in this work.
\\
\\
\indent Singular inverse-square potentials arise in quantum cosmology (\cite{berestycki1997existence}), in electron capture problems (\cite{giri2008electron}), but also in the linearisation of reaction-diffusion problems involving the heat equation with supercritical reaction term (\cite{azorero1998hardy}); also for these reasons, evolution problems involving this kind of potentials have been intensively studied in the last decades. 
\\
\indent In the pioneering work of 1984 \cite{baras1984heat}, Baras and Goldstein considered a heat equation in a bounded domain $\Omega\subset\RR^N$, for $N\geq 3$, with potential $-\mu/|x|^2$ and positive initial data, and proved that the Cauchy problem is well posed in the case $\mu\leq\mu^*:=(N-2)^2/4$, while it has no solution if $\mu>\mu^*$. We remind here that $\mu^*$ is the critical value for the constant in the Hardy inequality, guaranteeing that, for any $u\in H_0^1(\Omega)$, it holds 
\begin{align}\label{hardy_classical}
	\int_{\Omega}|\nabla u|^2\,dx\geq\mu^*\int_{\Omega}\frac{u^2}{|x|^2}\,dx.
\end{align}
$\indent$ The result by Baras and Goldstein was, in our knowledge, the first on the topic and it has later been improved by Vazquez and Zuazua in \cite{vazquez2000hardy}. There the authors present a complete description of the functional framework in which it is possible to obtain well-posedness for the singular heat equation they analyse; in particular, they prove that when $\mu<\mu^*$ the corresponding operator generates a coercive quadratic form form in $H_0^1(\Omega)$ and this allows to show well-posedness in the classical variational setting. On the contrary, when $\mu=\mu^*$, the space $H_0^1(\Omega)$ has to be slightly enlarged, due to the logarithmic singularity of the solutions at $x=0$. 
\\
\indent Also the question of whether it is possible to control heat equations involving singular inverse-square potentials has already been addressed in the past, and there is nowadays an extended literature on this topic. 
\\
\indent Among other works, we remind here the one by Ervedoza, \cite{ervedoza2008control}, and the one by Vancostenoble and Zuazua, \cite{vancostenoble2008null}. In both, the authors consider the case of an equation defined on a smooth domain containing the origin and prove exact null controllability choosing a control region inside of the domain, away from the singularity point $x=0$. 
\\
\indent In particular, in \cite{vancostenoble2008null} the null controllability result is obtained choosing a control region containing an annular set around the singularity and using appropriate cut-off functions in order to split the problem in two:
\begin{itemize}
	\item[•] in a region of the domain away from the singularity, in which it is possible to employ classical Carleman estimates;
	\item[•] in the remaining part of the domain, a ball centred in the singularity, in which the authors can apply polar coordinates and reduce themselves to a one-dimensional equation, which is easier to handle.
\end{itemize} 
$\indent$ In \cite{ervedoza2008control}, instead, the author generalises the result by Vancostenoble and Zuazua, proving controllability from any open subset of $\Omega$ that does not contains the singularity. This result is obtained deriving a new Carleman estimate, involving a weight that permits to avoid the splitting argument introduced in is \cite{vancostenoble2008null}.
\\
\indent Finally, it is worth to mention also the work \cite{cazacu2014controllability}, by Cazacu. In this paper, it is treated the case of a potential with singularity located on the boundary of the domain and it is proved again null controllability with an internal control. Also this result follows from a new Carleman estimate that is derived using the same kind of weight function proposed by Ervedoza, but with some suitable modifications that permit to deal with the case of boundary singularities. Moreover, the author shows that the presence of the singularity on the boundary of the domain allows to slightly enlarge the critical value for the constant $\mu$, up to $\mu^*:=N^2/4$. 
\\
\\
\indent In this article we analyse the case of a potential with singularity distributed all over the boundary. To the best of our knowledge, this is a problem that has never been treated in precedence, although it is a natural generalisation of the results of the works presented above.
\\
\\
\indent This paper is organized as follows: in Section \ref{h-p_ineq} we present the classical Hardy-Poincar\'e inequality introduced by Brezis and Marcus in \citep{brezis1997hardy}, which will then be applied for obtaining well-posedness of the equation we consider; we also give some extensions of this inequality, needed for obtaining the Carleman estimate. These results are then employed for obtaining the well-posedness of our equation, applying classical semi-group theory. In Section \ref{carleman sec} we present the Carleman estimate, showing what are the main differences between our result and previous ones obtained, for instance, in \cite{ervedoza2008control}, \cite{vancostenoble2008null} and, later, in \cite{cazacu2014controllability}. In Section \ref{obs_ineq_proof} we derive the observability inequality \eqref{observ ineq} and we apply it in the proof of Theorem \ref{control thm}. In Section \ref{stabilisation} we prove that the bound $1/4$ for the Hardy constant $\mu$ is sharp for control, showing the impossibility of preventing the solutions of the equation from blowing-up in the case of supercritical potentials. The Carleman estimates is proved in Section \ref{carleman_proof}. Section \ref{open pb} is dedicated to some interesting open problems related to our results. Finally, we conclude our article with an appendix  in which we prove several technical Lemmas that are fundamental in our analysis. 
\section{Hardy-Poincaré inequalities and well-posedness}\label{h-p_ineq}
When dealing with equations involving singular inverse-square potentials, it is by now classical that of great importance is an Hardy-type inequality. Inequalities of this kind have been proved to hold also in the more general case of for the potential $\mu/\delta^2$ (see, for instance \cite{brezis1997hardy},\cite{marcus1998best}); in particular, we have
\begin{proposition}\label{hardy_dist prop}
Let $\Omega\subset\RR^N$ be a bounded $C^2$ domain; then, for any $u\in H_0^1(\Omega)$, and for any $\mu\leq 1/4$, the following inequality holds
\begin{align}\label{hardy_dist}
	\int_{\Omega} |\nabla u|^2\,dx \geq \mu\int_{\Omega} \frac{u^2}{\delta^2}\,dx.
\end{align} 
\end{proposition}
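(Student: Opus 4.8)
The plan is to reduce the statement to the endpoint value $\mu=1/4$ and then obtain that case by a completion-of-squares argument with a weight modelled on $\delta$ near $\Gamma$. First, the reduction: if $\mu\leq 0$ the right-hand side of \eqref{hardy_dist} is non-positive and there is nothing to prove, while for $0<\mu<1/4$ the inequality follows from the case $\mu=1/4$ since $\int_\Omega u^2/\delta^2\,dx\geq 0$ and the left-hand side does not depend on $\mu$. So one only has to prove \eqref{hardy_dist} with $\mu=1/4$. The mechanism I would use is the elementary identity valid for any $F\in\big(W^{1,\infty}_{\mathrm{loc}}(\Omega)\big)^N$ and any $u\in C_c^\infty(\Omega)$: expanding the square and integrating the cross term $2\int_\Omega u\,\nabla u\cdot F\,dx=\int_\Omega\nabla(u^2)\cdot F\,dx=-\int_\Omega u^2\,\mathrm{div}\,F\,dx$ by parts (the boundary term vanishing since $u$ has compact support),
\begin{align*}
0\leq\int_\Omega\big|\nabla u+Fu\big|^2\,dx=\int_\Omega|\nabla u|^2\,dx+\int_\Omega\big(|F|^2-\mathrm{div}\,F\big)u^2\,dx,
\end{align*}
whence $\int_\Omega|\nabla u|^2\,dx\geq\int_\Omega\big(\mathrm{div}\,F-|F|^2\big)u^2\,dx$, which extends to all $u\in H_0^1(\Omega)$ by density (together with Fatou's lemma). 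It then remains to produce $F$ with $\mathrm{div}\,F-|F|^2\geq 1/(4\delta^2)$ on $\Omega$.

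The natural candidate near the boundary is $F=-\tfrac12\,\nabla\delta/\delta$. Since $\Omega$ is a bounded $C^2$ domain, there exists $\rho_0>0$ such that $\delta\in C^2(\overline{\Omega_{\rho_0}})$ on the tubular neighbourhood $\Omega_{\rho_0}:=\{x\in\Omega:\delta(x)<\rho_0\}$, with $|\nabla\delta|\equiv 1$ and $|\Delta\delta|\leq K$ there, $K$ depending only on the principal curvatures of $\Gamma$. A direct computation then gives, on $\Omega_{\rho_0}$,
\begin{align*}
\mathrm{div}\,F-|F|^2=\frac{|\nabla\delta|^2}{4\delta^2}-\frac{\Delta\delta}{2\delta}=\frac{1}{4\delta^2}-\frac{\Delta\delta}{2\delta}.
\end{align*}

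The main obstacle is twofold: the curvature term $-\Delta\delta/(2\delta)$, which is of lower order than $1/(4\delta^2)$ as $\delta\to 0$ but cannot simply be discarded once the constant $1/4$ is meant to be sharp, and the fact that $\delta$ is in general not $C^1$ away from $\Gamma$ (ridge set / cut locus), so the above $F$ is not admissible on all of $\Omega$. I would circumvent this in two steps. First, globalise the weight: replace $\delta$ by some $\beta\in C^2(\overline\Omega)$ that coincides with $\delta$ on $\Omega_{\rho_0/2}$ and is bounded below by a positive constant, and take $F$ built from $\beta$ together with a bounded correction chosen so that $\mathrm{div}\,F-|F|^2\geq 1/(4\delta^2)-C_\Omega$ on $\Omega$; this yields the Brezis--Marcus inequality $\int_\Omega|\nabla u|^2\,dx\geq\tfrac14\int_\Omega u^2/\delta^2\,dx-C_\Omega\int_\Omega u^2\,dx$. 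Second, remove the zeroth-order term: for $\mu<1/4$ this is immediate by interpolating with the Poincar\'e inequality $\int_\Omega|\nabla u|^2\,dx\geq\lambda_1\int_\Omega u^2\,dx$, whereas for the endpoint $\mu=1/4$ one argues that a minimising sequence for the quotient $\int_\Omega|\nabla u|^2\,dx\big/\int_\Omega u^2/\delta^2\,dx$ must concentrate at a point of $\Gamma$, and a blow-up near that point reduces the quotient to the one-dimensional half-line Hardy quotient, whose infimum is exactly $1/4$. Since only the conclusion enters the rest of the paper, I would invoke the endpoint case directly from \cite{brezis1997hardy} (see also \cite{marcus1998best}) rather than reproduce this last, more delicate, argument.
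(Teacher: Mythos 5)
Your reduction to $\mu=1/4$ and the vector-field identity
\begin{align*}
0\leq\int_\Omega\big|\nabla u+Fu\big|^2\,dx=\int_\Omega|\nabla u|^2\,dx+\int_\Omega\big(|F|^2-\mathrm{div}\,F\big)u^2\,dx
\end{align*}
are correct, and the choice $F=-\nabla\delta/(2\delta)$ near $\Gamma$ is indeed the standard mechanism behind the Brezis--Marcus inequality; your computation of $\mathrm{div}\,F-|F|^2$ is also right. For what it is worth, the paper offers no proof of this proposition at all --- it simply points to \cite{brezis1997hardy} and \cite{marcus1998best} --- so your attempt is already more explicit than the text.

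The genuine gap is in your final step, the removal of the zeroth-order term at the endpoint $\mu=1/4$. The concentration/blow-up argument you sketch only shows that functions concentrating at a boundary point drive the Rayleigh quotient $\int_\Omega|\nabla u|^2\,dx\big/\int_\Omega u^2/\delta^2\,dx$ \emph{down to} $1/4$, i.e.\ it proves the upper bound $\inf\leq 1/4$; it gives no lower bound, because for a non-convex domain a minimising sequence need not concentrate at $\Gamma$. In fact the inequality \eqref{hardy_dist} with constant $1/4$ and no remainder fails for some bounded $C^2$ domains: \cite{marcus1998best} exhibits smooth domains whose best constant is strictly below $1/4$, and \cite{brezis1997hardy} only yields $\int_\Omega|\nabla u|^2\,dx\geq\tfrac14\int_\Omega u^2/\delta^2\,dx+\lambda(\Omega)\int_\Omega u^2\,dx$ with a constant $\lambda(\Omega)$ that is guaranteed nonnegative only under a convexity-type hypothesis. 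The sign of the curvature term $-\Delta\delta/(2\delta)$, which you correctly identify as the obstruction, is exactly what is missing: one needs $-\Delta\delta\geq0$ in the distributional sense (weak mean convexity), or convexity of $\Omega$, to discard it while keeping the constant $1/4$. So the citation you fall back on does not close the argument for a general $C^2$ domain; this is really a defect of the proposition as stated (the form with the remainder term, Proposition \ref{hardy 1 prop}, is what the rest of the paper actually uses), but your proposed route to the endpoint would fail as written.
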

Inequality \eqref{hardy_dist} will be applied for obtaining the well-posedness of \eqref{heat hardy nh}, as well as the observability inequality \eqref{observ ineq}. For obtaining the Carleman estimate, instead, we are going to need the following Propositions
\begin{proposition}\label{hardy 1 prop}
Let $\Omega\subset\RR^N$ be a bounded $C^2$ domain. For any $\mu\leq 1/4$ and any $\gamma\in (0,2)$ there exist two positive constants $A_1$ and $A_2$, depending on $\gamma$ and $\Omega$ such that, for any $u\in H_0^1(\Omega)$, the following inequality holds
\begin{align}\label{hardy 1}
	A_1\int_{\Omega}\frac{u^2}{\delta^{\gamma}}\,dx + \mu\int_{\Omega}\frac{u^2}{\delta^2}\,dx \leq \int_{\Omega}|\nabla u|^2\,dx + A_2\int_{\Omega} u^2\,dx.
\end{align}
\end{proposition}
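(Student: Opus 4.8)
The plan is to reduce to the endpoint $\mu=1/4$ and then establish, near $\Gamma$, a localized improvement of the Hardy inequality by means of the ground-state substitution $u=\delta^{1/2}v$. First, since $\int_\Omega u^2/\delta^2\geq 0$, proving \eqref{hardy 1} for $\mu=1/4$ automatically yields it for every $\mu\leq 1/4$ with the \emph{same} constants $A_1,A_2$; hence I fix $\mu=1/4$ and, by density of $C_c^\infty(\Omega)$ in $H_0^1(\Omega)$, argue for $u\in C_c^\infty(\Omega)$. Using that $\Omega$ is of class $C^2$, choose $\sigma_0\in(0,1]$ small enough that on the collar $\Omega_s:=\{x\in\Omega:\delta(x)<s\}$, $s\le\sigma_0$, the function $\delta$ is $C^2$, $|\nabla\delta|=1$ and $|\Delta\delta|\leq C_\Omega$, and that the normal-coordinate map $\Phi:\Gamma\times(0,\sigma_0)\to\Omega_{\sigma_0}$ is a $C^1$ diffeomorphism with Jacobian bounded above and below by positive constants, whose ratio will be denoted $C_J$.

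Next I localize by a smooth partition of unity $\chi_1^2+\chi_2^2\equiv 1$ on $\overline\Omega$ with $\chi_1\equiv 1$ near $\Gamma$ (say on $\Omega_\sigma$) and $\mathrm{supp}\,\chi_1\subset\Omega_{\sigma_0}$ for some $\sigma<\sigma_0$. The IMS localization formula gives
\begin{align*}
\int_\Omega|\nabla(\chi_1u)|^2+\int_\Omega|\nabla(\chi_2u)|^2=\int_\Omega|\nabla u|^2+\int_\Omega\big(|\nabla\chi_1|^2+|\nabla\chi_2|^2\big)u^2\leq\int_\Omega|\nabla u|^2+C\int_\Omega u^2,
\end{align*}
the cut-off gradient terms being bounded and supported in $\{\delta\geq\sigma\}$; in particular $\int_\Omega|\nabla(\chi_1u)|^2\leq\int_\Omega|\nabla u|^2+C\int_\Omega u^2$. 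On the same region $\{\delta\geq\sigma\}$ both weights $\delta^{-\gamma},\delta^{-2}$ are $\leq\sigma^{-2}$, so the part of the left-hand side of \eqref{hardy 1} supported there is $\leq C\int_\Omega u^2$. Since $\chi_1u=u$ on $\Omega_\sigma$ and $(\chi_1u)^2\leq u^2$ throughout, the whole inequality is reduced to the localized estimate
\begin{align*}
\frac14\int_{\Omega_{\sigma_0}}\frac{w^2}{\delta^2}+A_1\int_{\Omega_{\sigma_0}}\frac{w^2}{\delta^\gamma}\leq\int_{\Omega_{\sigma_0}}|\nabla w|^2,\qquad w:=\chi_1u,
\end{align*}
where $w$ is supported in a compact subset of the collar, away from both $\Gamma$ and $\{\delta=\sigma_0\}$.

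For this I set $w=\delta^{1/2}v$ (licit since $w$ is supported where $\delta$ is bounded below), expand $|\nabla w|^2$, use $|\nabla\delta|=1$, and integrate the cross term by parts to obtain the exact identity
\begin{align*}
\int_{\Omega_{\sigma_0}}|\nabla w|^2-\frac14\int_{\Omega_{\sigma_0}}\frac{w^2}{\delta^2}=\int_{\Omega_{\sigma_0}}\delta\,|\nabla v|^2-\frac12\int_{\Omega_{\sigma_0}}(\Delta\delta)\,v^2,
\end{align*}
which shows that the sharp constant $1/4$ is consumed exactly, leaving the nonnegative remainder $\int\delta|\nabla v|^2$. Passing to the coordinates $\Phi$ and working fibrewise in the normal variable $s=\delta$, I then use two elementary one-dimensional inequalities for $\phi\in C_c^\infty((0,\sigma_0))$, $\sigma_0\leq 1$: the subcritical weighted Hardy inequality $\int_0^{\sigma_0}s\,\phi'^2\,ds\geq\big(\tfrac{2-\gamma}{2}\big)^2\int_0^{\sigma_0}s^{1-\gamma}\phi^2\,ds$ (valid precisely because $\gamma<2$), and the Poincaré-type bound $\int_0^{\sigma_0}\phi^2\,ds\leq\sigma_0\int_0^{\sigma_0}s\,\phi'^2\,ds$. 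Retaining only the normal component of $|\nabla v|^2$ and using the two-sided Jacobian bound, these give $\int_{\Omega_{\sigma_0}}\delta|\nabla v|^2\geq \tfrac{(2-\gamma)^2}{4}C_J^{-1}\int_{\Omega_{\sigma_0}}\delta^{1-\gamma}v^2=\tfrac{(2-\gamma)^2}{4}C_J^{-1}\int_{\Omega_{\sigma_0}}w^2/\delta^\gamma$ and $\int_{\Omega_{\sigma_0}}v^2\leq C_J\sigma_0\int_{\Omega_{\sigma_0}}\delta|\nabla v|^2$.

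The main difficulty is the error term $-\tfrac12\int(\Delta\delta)v^2$: for a non-convex $\Omega$ it need not have a favourable sign, and, the Hardy constant being optimal, it cannot be absorbed into $\tfrac14\int w^2/\delta^2$. The remedy is to shrink the collar: bounding $\big|\tfrac12\int(\Delta\delta)v^2\big|\leq\tfrac{C_\Omega}{2}\int v^2\leq\tfrac{C_\Omega C_J\sigma_0}{2}\int\delta|\nabla v|^2$ and choosing $\sigma_0$ so that $C_\Omega C_J\sigma_0\leq 1$, the identity above yields $\int_{\Omega_{\sigma_0}}|\nabla w|^2-\tfrac14\int_{\Omega_{\sigma_0}}w^2/\delta^2\geq\tfrac12\int_{\Omega_{\sigma_0}}\delta|\nabla v|^2\geq A_1\int_{\Omega_{\sigma_0}}w^2/\delta^\gamma$ with $A_1=\tfrac{(2-\gamma)^2}{8C_J}$ depending only on $\gamma$ and $\Omega$. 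Reassembling — bounding $\int_\Omega u^2/\delta^2$ and $\int_\Omega u^2/\delta^\gamma$ by their restrictions to $\Omega_\sigma$, where $u=w$, plus $C\int_\Omega u^2$, then applying the localized estimate and the IMS bound above — produces \eqref{hardy 1} for $u\in C_c^\infty(\Omega)$ with $\mu=1/4$; the passage to general $u\in H_0^1(\Omega)$ follows from Proposition \ref{hardy_dist prop}, which (since $\mu=1/4>0$) makes $\|u/\delta\|_{L^2(\Omega)}$ — and hence, $\gamma<2$, also $\|u/\delta^{\gamma/2}\|_{L^2(\Omega)}$ — converge along an approximating sequence, after which the case $\mu<1/4$ is free by the opening remark.
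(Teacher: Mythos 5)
Your argument is correct, and it is worth noting that the paper does not actually prove Proposition \ref{hardy 1 prop} at all: it simply asserts that the inequality ``follows immediately from the inequalities with weighted integral presented in [Brezis--Marcus, Section 4]'' and omits the proof. What you have written is essentially a self-contained reconstruction of that Brezis--Marcus-type improved Hardy inequality, and every step checks out: the reduction to $\mu=1/4$ is legitimate because the extra term $\int u^2/\delta^2$ is nonnegative; the IMS identity is exactly the right localization tool (a crude $|\nabla(\chi_1 u)|^2\le 2\chi_1^2|\nabla u|^2+2|\nabla\chi_1|^2u^2$ would destroy the sharp constant $1$ in front of $\int|\nabla u|^2$); the ground-state substitution $w=\delta^{1/2}v$ together with $|\nabla\delta|=1$ produces the stated exact identity; and both one-dimensional inequalities are correct as stated for $\sigma_0\le1$ (the weighted Hardy step uses $s^{3-\gamma}\le s$ on $(0,\sigma_0)$, and $\int_0^{\sigma_0}\log(\sigma_0/s)\,ds=\sigma_0$ gives the Poincar\'e bound). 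Two very minor points you may wish to make explicit: (i) the constants $C_\Omega$ and $C_J$ a priori depend on the collar width, so the choice $C_\Omega C_J\sigma_0\le1$ should be justified by first fixing a collar on which they are finite and then shrinking $\sigma_0$, using that both constants only improve as the collar shrinks; (ii) the regularity $\delta\in C^2$ near $\Gamma$ for a $C^2$ domain is the standard Gilbarg--Trudinger lemma and is what licenses the pointwise bound $|\Delta\delta|\le C_\Omega$ on the collar (the paper itself only records the weaker global bound $|\Delta\delta|\le P/\delta$). Compared with the paper's bare citation, your proof has the advantage of making visible exactly where the constants $A_1$ (from the subcritical one-dimensional Hardy weight, degenerating as $\gamma\to2$) and $A_2$ (from the cut-off and the curvature term $\Delta\delta$) come from.
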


\begin{proposition}\label{hardy 2 prop}
Let $\Omega\subset\RR^N$ be a bounded $C^2$ domain. For any $\mu\leq 1/4$ and any $\gamma\in (0,2)$ there exists a positive constant $A_3$ depending on $\gamma$, $\mu$ and $\Omega$ such that, for any $u\in H_0^1(\Omega)$, the following inequality holds
\begin{align}\label{hardy 2}
	\int_{\Omega} \delta^{2-\gamma}|\nabla u|^2\,dx \leq R_{\Omega}^{2-\gamma} \int_{\Omega}\left(|\nabla u|^2 - \mu\frac{u^2}{\delta^2}\right)\,dx + A_3\int_{\Omega} u^2\,dx.
\end{align}
\end{proposition}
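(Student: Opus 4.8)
The plan is to recast \eqref{hardy 2} as a weighted Hardy--Poincar\'e inequality and then to exploit the ground-state substitution $u=\delta^{1/2}\phi$. Dividing \eqref{hardy 2} by $R_\Omega^{2-\gamma}$ and putting $g:=1-(\delta/R_\Omega)^{2-\gamma}$, which satisfies $0\le g\le 1$ in $\Omega$ and $g\equiv 1$ on $\Gamma$, the claimed estimate is equivalent to
\begin{equation*}
\mu\int_\Omega\frac{u^2}{\delta^2}\,dx\ \le\ \int_\Omega g\,|\nabla u|^2\,dx+\frac{A_3}{R_\Omega^{2-\gamma}}\int_\Omega u^2\,dx .
\end{equation*}
Its left-hand side is nondecreasing in $\mu$ while its right-hand side does not depend on $\mu$, so it suffices to prove it for the critical value $\mu=1/4$; the case $\mu\le 0$ is immediate, since then $g\le 1$ already gives the statement with $A_3=0$.

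Take $\mu=1/4$, and work first with $u\in C_c^\infty(\Omega)$, the general case following by density thanks to Proposition~\ref{hardy_dist prop}. Since $|\nabla\delta|=1$ a.e.\ in $\Omega$, the substitution $u=\delta^{1/2}\phi$ gives $|\nabla u|^2=\delta|\nabla\phi|^2+\tfrac14\delta^{-1}\phi^2+\tfrac12\nabla\delta\cdot\nabla(\phi^2)$; multiplying by $g$, integrating over $\Omega$ and integrating the cross term by parts — which brings in $\Delta\delta$ only in a tubular neighbourhood $\{\delta<\delta_0\}$ of $\Gamma$, where $\delta\in C^2$ and $\Delta\delta\in L^\infty$ — one arrives, after collecting terms and using $\phi^2=\delta^{-1}u^2$, at
\begin{equation*}
\int_\Omega g\,|\nabla u|^2\,dx-\frac14\int_\Omega\frac{u^2}{\delta^2}\,dx
=\int_\Omega g\,\delta\,|\nabla\phi|^2\,dx+\frac{3-2\gamma}{4\,R_\Omega^{2-\gamma}}\int_\Omega\frac{u^2}{\delta^{\gamma}}\,dx-\frac12\int_\Omega g\,(\Delta\delta)\,\phi^2\,dx .
\end{equation*}
Here $\int g\,\delta|\nabla\phi|^2\ge 0$, the second term on the right is nonnegative precisely when $\gamma\le 3/2$, and the last (curvature) term is localized near $\Gamma$.

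It then remains to show that the right-hand side is $\ge -C\int_\Omega u^2$. For the curvature term one uses $\Delta\delta\in L^\infty(\{\delta<\delta_0\})$, Young's inequality and the (improved) Hardy inequalities of Propositions~\ref{hardy_dist prop}--\ref{hardy 1 prop}, with a logarithmic refinement near the critical weight where needed, together with $\inf_{\{\delta<\delta_0\}}g\to 1$ as $\delta_0\to 0$, to bound $-\tfrac12\int g(\Delta\delta)\phi^2$ by $\tfrac12\int_\Omega g\,\delta|\nabla\phi|^2+C\int_\Omega u^2$ once $\delta_0$ is small enough; the contributions of the region away from $\Gamma$, where the negative powers of $\delta$ are bounded, are absorbed into $C\int_\Omega u^2$. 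When $\gamma\le 3/2$ the argument closes at once, giving \eqref{hardy 2} with $A_3=CR_\Omega^{2-\gamma}$. When $\gamma\in(3/2,2)$, the term $\tfrac{3-2\gamma}{4R_\Omega^{2-\gamma}}\int u^2/\delta^{\gamma}$ has the unfavourable sign, and I would handle it by reserving from the start a small fraction $\beta\in(0,1)$ of the quantity $R_\Omega^{2-\gamma}\bigl(\int|\nabla u|^2-\tfrac14\int u^2/\delta^2\bigr)$, bounding $\int u^2/\delta^{\gamma}$ from below on that fraction by Proposition~\ref{hardy 1 prop}, and applying the ground-state identity above only to the complementary fraction.

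The main obstacle is the endpoint $\mu=1/4$, where the plain Hardy inequality leaves no slack: the whole scheme then hinges on extracting genuine positivity from the ground-state remainder $\int g\,\delta|\nabla\phi|^2$ near $\Gamma$ and, for $\gamma>3/2$, from the improvement recorded in Proposition~\ref{hardy 1 prop}. The remaining difficulties are technical — chiefly that $\delta$ is only $C^2$ in a one-sided neighbourhood of $\Gamma$, so every integration by parts producing $\Delta\delta$ must be performed there, while the interior is dealt with using the boundedness of $\delta^{-1},\delta^{-\gamma},\delta^{1-\gamma}$ away from $\Gamma$.
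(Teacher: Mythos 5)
Your strategy --- the ground-state substitution $u=\delta^{1/2}\phi$ applied directly on all of $\Omega$ with the explicit weight $g=1-(\delta/R_{\Omega})^{2-\gamma}$ --- is genuinely different from the paper's proof, which substitutes $v=\phi z$ with an explicit supersolution $\phi$ of $-\Delta\phi\geq\mu\phi/\delta^2+\phi^p$ on the collar $\Omega_{r_0}$ only, and then patches to the whole domain with a cut-off and the bilinear estimate \eqref{weight ineq}. Your reduction to $\mu=1/4$ is valid and your key identity is correct: the coefficient of $R_{\Omega}^{-(2-\gamma)}\int u^2/\delta^{\gamma}$ produced by $-\tfrac14\int(1-g)u^2/\delta^2$ together with $-\tfrac12\int(\nabla g\cdot\nabla\delta)\phi^2$ is indeed $-\tfrac14+\tfrac{2-\gamma}{2}=\tfrac{3-2\gamma}{4}$. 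For $\gamma\leq 3/2$ the scheme closes, modulo the absorption of the curvature term, for which you need the quantitative statement $\int_{\{\delta<\delta_0\}}\phi^2\,dx\leq\varepsilon(\delta_0)\int_{\Omega}\delta|\nabla\phi|^2\,dx+C(\delta_0)\int_{\Omega}u^2\,dx$ with $\varepsilon(\delta_0)\to0$: Proposition \ref{hardy_dist prop} does not give this, and Proposition \ref{hardy 1 prop} only allows absorption into the full deficit $\int|\nabla u|^2-\tfrac14\int u^2/\delta^2$ rather than into $\int g\,\delta|\nabla\phi|^2$, which is not enough since $g<1$. The logarithmically refined Hardy inequality you invoke does the job, but it is an extra ingredient you assert rather than prove.

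The genuine gap is the case $\gamma\in(3/2,2)$. Your fix requires a fraction $\beta$ satisfying simultaneously $\beta A_1(\gamma)\geq\tfrac{2\gamma-3}{4}R_{\Omega}^{-(2-\gamma)}$ (so that the reserved portion of the deficit, estimated via Proposition \ref{hardy 1 prop}, dominates the negative term $\tfrac{3-2\gamma}{4}R_{\Omega}^{-(2-\gamma)}\int u^2/\delta^{\gamma}$) and $\beta\leq\inf_{\Omega}g=1-\big(\sup_{\Omega}\delta/R_{\Omega}\big)^{2-\gamma}$ (so that the modified weight $g-\beta$ stays nonnegative; otherwise $\int(g-\beta)\,\delta|\nabla\phi|^2$ acquires a negative interior contribution of size $\int|\nabla u|^2$, which nothing in your scheme controls). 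These two constraints conflict: as $\gamma\uparrow 2$ the admissible range for $\beta$ shrinks to zero like $(2-\gamma)$, while the required lower bound on $\beta$ tends to $\tfrac{1}{4A_1(\gamma)}$; reconciling them would force $A_1(\gamma)\gtrsim(2-\gamma)^{-1}$, which nothing guarantees and which is not to be expected, since at $\gamma=2$ any positive $A_1$ would improve the sharp Hardy constant. So for $\gamma$ close to $2$ --- precisely the hardest sub-case, the inequality being false at $\gamma=2$ --- the one-sentence plan does not close as stated. You would need either a finer splitting localized near $\Gamma$, where $g$ is close to $1$, or the supersolution-plus-cut-off route the paper takes.
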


\begin{proposition}\label{hardy 3 prop}
Let $\Omega\subset\RR^N$ be a bounded $C^2$ domain. For any $\mu\leq 1/4$ and any $\gamma\in (0,2)$ there exist two positive constants $A_4$ and $A_5$ depending on $\gamma$, $\mu$ and $\Omega$ such that, for any $u\in H_0^1(\Omega)$, the following inequality holds
\begin{align}\label{hardy 3}
	\int_{\Omega}\left(|\nabla u|^2 -\mu\frac{u^2}{\delta^2}\right)\,dx + A_4\int_{\Omega} u^2\,dx \geq A_5\int_{\Omega}\left(\delta^{2-\gamma}|\nabla u|^2 + A_1\frac{u^2}{\delta^{\gamma}}\right)\,dx,
\end{align}
where $A_1$ is the positive constant introduced in Proposition \ref{hardy 1 prop}.
\end{proposition}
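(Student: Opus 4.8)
The plan is to deduce \eqref{hardy 3} directly from Propositions \ref{hardy 1 prop} and \ref{hardy 2 prop}, since the two quantities appearing on its right-hand side, namely $\int_\Omega \delta^{2-\gamma}|\nabla u|^2\,dx$ and $A_1\int_\Omega u^2\delta^{-\gamma}\,dx$, have already been estimated there — each of them up to a lower-order $L^2(\Omega)$ remainder — by the nonnegative \emph{Hardy deficit}
\begin{align*}
D(u):=\int_\Omega\big(|\nabla u|^2-\mu u^2\delta^{-2}\big)\,dx\;\geq\;0\quad(\mu\leq 1/4,\ \text{by Proposition \ref{hardy_dist prop}}),
\end{align*}
which is precisely the leading term on the left-hand side of \eqref{hardy 3}. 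Thus the whole argument reduces to rewriting the two earlier estimates in terms of $D(u)$ and adding them with equal weights.

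Concretely, first I would subtract $\mu\int_\Omega u^2\delta^{-2}\,dx$ from both sides of \eqref{hardy 1}, which gives
\begin{align*}
A_1\int_\Omega\frac{u^2}{\delta^\gamma}\,dx\;\leq\;D(u)+A_2\int_\Omega u^2\,dx.
\end{align*}
Then I would divide \eqref{hardy 2} through by the positive constant $R_\Omega^{2-\gamma}$, obtaining
\begin{align*}
\frac{1}{R_\Omega^{2-\gamma}}\int_\Omega\delta^{2-\gamma}|\nabla u|^2\,dx\;\leq\;D(u)+\frac{A_3}{R_\Omega^{2-\gamma}}\int_\Omega u^2\,dx.
\end{align*}
Adding these two inequalities, dividing the result by $2$, and finally bounding the left-hand side from below by replacing the two coefficients $\tfrac12 R_\Omega^{-(2-\gamma)}$ and $\tfrac12$ with their minimum
\begin{align*}
A_5:=\min\left\{\frac12,\;\frac{1}{2R_\Omega^{2-\gamma}}\right\},
\end{align*}
I would arrive at
\begin{align*}
A_5\int_\Omega\left(\delta^{2-\gamma}|\nabla u|^2+A_1\frac{u^2}{\delta^\gamma}\right)dx\;\leq\;D(u)+A_4\int_\Omega u^2\,dx,
\end{align*}
with $A_4:=\tfrac12\big(A_2+A_3R_\Omega^{-(2-\gamma)}\big)$. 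Recalling the definition of $D(u)$, this is exactly \eqref{hardy 3}, and both $A_4$ and $A_5$ are positive and depend only on $\gamma$, $\mu$ and $\Omega$ (through $A_1,A_2,A_3$ and $R_\Omega$), as required.

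I do not expect a genuine obstacle here: the substantive content is already contained in Propositions \ref{hardy 1 prop} and \ref{hardy 2 prop}. The only two points deserving a line of justification are, first, that the subtraction of $\mu\int_\Omega u^2\delta^{-2}\,dx$ performed above is legitimate, which holds because $\int_\Omega u^2\delta^{-2}\,dx<\infty$ for every $u\in H_0^1(\Omega)$ by \eqref{hardy_dist} applied with $\mu=1/4$; and second, that the constant multiplying $u^2\delta^{-\gamma}$ on the right of \eqref{hardy 3} must be literally the $A_1$ of Proposition \ref{hardy 1 prop}. The latter is the reason for combining the two inequalities with equal weights (i.e. dividing the sum by $2$) rather than rescaling either of them: this keeps the coefficient of $u^2\delta^{-\gamma}$ on the combined left-hand side equal to $\tfrac12 A_1\geq A_5 A_1$, so that $A_1$ can be factored out against $A_5$ exactly as in the statement.
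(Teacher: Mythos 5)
Your proposal is correct and follows exactly the route the paper intends: the paper itself only remarks that \eqref{hardy 3} ``is a direct consequence of the application of'' Propositions \ref{hardy 1 prop} and \ref{hardy 2 prop}, and your combination (rewriting both estimates in terms of the Hardy deficit, adding with equal weights, and taking $A_5=\min\{1/2,\,1/(2R_{\Omega}^{2-\gamma})\}$) is a clean, complete execution of that remark. The two justificatory points you flag — finiteness of $\int_{\Omega}u^2\delta^{-2}\,dx$ via \eqref{hardy_dist}, and preserving the literal constant $A_1$ on the right-hand side — are exactly the right details to check.
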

The proof of \ref{hardy 1} follows immediately from the inequalities with weighted integral presented in \cite[Section 4]{brezis1997hardy} and we are going to omit it here; moreover, \ref{hardy 3} is a direct consequence of the application of \ref{hardy 1} and \ref{hardy 2}. Concerning the proof of Proposition \ref{hardy 2 prop}, instead, we will presented it in appendix B.
\\
\\
\indent We conclude this section analysing existence and uniqueness of solutions for equation \eqref{heat hardy nh}, applying classical semi-group theory; at this purpose, we apply the same argument presented in \cite{cazacu2014controllability}. Therefore, for any fixed $\gamma\in[0,2)$ let us define the set 
\begin{align}\label{set_gamma}
	\mathcal{L}^{\gamma}:=\left\{A>0 \; \textrm{ s.t. } \; \inf_{u\in H_0^1(\Omega)} \frac{\int_{\Omega}\left(|\nabla u|^2 - \mu^* u^2/\delta^2 + Au^2\right)\,dx}{A_1\int_{\Omega}u^2/\delta^{\gamma}\,dx}\geq 1\right\}.
\end{align}
\indent We remind here that $\mu^*$ is the critical Hardy constant and that in our case we have $\mu^*=1/4$. Moreover, the set \eqref{set_gamma} is clearly non empty since it contains the constant $A_2$ in the inequality \eqref{hardy 1}. Now, we define 
\begin{align}\label{A_zero_gamma}
	A_0^{\gamma}:=\inf_{A\in \mathcal{L}^{\gamma}} A
\end{align}
and, for any $\mu\leq\mu^*$, we introduce the functional 
\begin{align*}
	\Phi_{\mu}^{\gamma}(u):= \int_{\Omega} |\nabla u|^2\,dx - \mu\int_{\Omega}\frac{u^2}{\delta^2}\, dx + A_0^{\gamma} \int_{\Omega} u^2\,dx; 
\end{align*}
we remark that this functional is positive for any test function, due to \eqref{hardy 1} and to the particular choice of the constant $A_0^{\gamma}$. 
\\
\indent Next, let us define the Hilbert space $H_{\mu}^{\gamma}$ as the closure of $C_0^{\infty}(\Omega)$ with respect to the norm induced by $\Phi_{\mu}^{\gamma}$; if $\mu\leq \mu^*$ we obtain
\begin{align}\label{norm_equiv}
	\left(1-\frac{\mu^+}{\mu^*}\right)\int_{\Omega} \left(|\nabla u|^2 + A_0^{\gamma} u^2\right)\,dx + \frac{\mu^+}{\mu^*}\int_{\Omega} \frac{u^2}{\delta^{\gamma}}\,dx \leq \norm{u}{H}^2 \leq \left(1+\frac{\mu^-}{\mu^*}\right)\int_{\Omega} \left(|\nabla u|^2 + A_0^{\gamma} u^2\right)\,dx,
\end{align}
where $\mu^+:=\max\{0,\mu\}$ and $\mu^-:=\max\{0,-\mu\}$. 
\\
\indent  From the norm equivalence \eqref{norm_equiv}, in the sub-critical case $\mu<\mu^*$ it follows the identification $H_{\mu}^{\gamma}=H_0^1(\Omega)$; in the critical case $\mu=\mu^*$, instead, this identification does not hold anymore and the space $H_{\mu}^{\gamma}$ is slightly larger than $H_0^1(\Omega).$ For more details on the characterisation of these kind of spaces, we refer to \cite{vazquez2000hardy}.
\\
Let us now consider the unbounded operator $\mathcal{B}_{\mu}^{\gamma}:\mathcal{D}(\mathcal{B}_{\mu}^{\gamma})\subset L^2(\Omega)\to L^2(\Omega)$ defined as
\begin{align}\label{operator B}
	\begin{array}{c}
		\displaystyle\mathcal{D}(\mathcal{B}_{\mu}^{\gamma}):=\left\{u\in H_{\mu}^{\gamma}\;\textrm{ s.t. }\; -\Delta u-\frac{\mu}{\delta^2}u+A_0^{\gamma} u\in L^2(\Omega)\right\}, 
		\\
		\\
		\displaystyle\mathcal{B}_{\mu}^{\gamma}u:=-\Delta u-\frac{\mu}{\delta^2}u+A_0^{\gamma} u,
	\end{array}
\end{align} 
whose norm is given by
\begin{align*}
	\norm{u}{\mathcal{B}_{\mu}^{\gamma}} = \norm{u}{L^2(\Omega)} + \norm{\mathcal{B}_{\mu}^{\gamma} u}{L^2(\Omega)}.
\end{align*}
$\indent$ With the definitions we just gave, by standard semi-group theory we have that for any $\mu\leq\mu^*$ the operator $(\mathcal{B}_{\mu}^{\gamma},\mathcal{D}(\mathcal{B}_{\mu}^{\gamma}))$ generates an analytic semi-group in the pivot space $L^2(\Omega)$ for the equation \eqref{heat hardy nh}. For more details we refer to the Hille-Yosida theory, presented in \cite[Chapter 7]{brezis2010functional}, which can be adapted in the context of the space $H_{\mu}^{\gamma}$ introduced above. 
\\
\indent Therefore, from the construction we just presented we immediately have the following well-posedness result 
\begin{theorem}\label{well-posedness thm }
Given $u_0\in L^2(\Omega)$ and $f\in C([0,T];L^2(\Omega))$, for any $\mu\leq 1/4$ the problem \eqref{heat hardy nh} admits a unique weak solution 
	\begin{align*}
		u\in\mathrm{C}^0([0,T];L^2(\Omega))\cap\mathrm{L}^2((0,T);H_{\mu}^{\gamma}).
	\end{align*}
\end{theorem}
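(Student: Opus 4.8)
The plan is to recast \eqref{heat hardy nh} as an abstract Cauchy problem driven by the analytic semigroup generated by $-\mathcal{B}_{\mu}^{\gamma}$, extract the mild solution, and then upgrade its regularity to $L^2((0,T);H_{\mu}^{\gamma})$ by a direct energy estimate. First I would verify that the operator $\mathcal{B}_{\mu}^{\gamma}$ of \eqref{operator B} is precisely the self-adjoint operator on $L^2(\Omega)$ canonically associated (via Lax--Milgram and Lions' representation theorem) with the symmetric bilinear form
\begin{align*}
	a(u,v):=\intr{\Omega}\nabla u\cdot\nabla v\,dx-\mu\intr{\Omega}\frac{uv}{\delta^2}\,dx+A_0^{\gamma}\intr{\Omega}uv\,dx,\qquad u,v\in H_{\mu}^{\gamma},
\end{align*}
so that $a(u,u)=\Phi_{\mu}^{\gamma}(u)=\norm{u}{H_{\mu}^{\gamma}}^2$. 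By the norm equivalence \eqref{norm_equiv} the form $a$ is continuous and coercive on $H_{\mu}^{\gamma}$, and --- $\Omega$ being bounded --- one also has $a(u,u)\geq c\,\norm{u}{L^2(\Omega)}^2$ for some $c>0$. A symmetric, continuous, coercive form generates a self-adjoint operator that is bounded below, and every such operator generates an analytic semigroup $(e^{-t\mathcal{B}_{\mu}^{\gamma}})_{t\geq0}$ on $L^2(\Omega)$; this is nothing but the Hille--Yosida theory of \cite[Chapter 7]{brezis2010functional} adapted to $H_{\mu}^{\gamma}$, as announced in the discussion preceding the statement.

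Next I would reduce \eqref{heat hardy nh} to $\mathcal{B}_{\mu}^{\gamma}$: writing $\mathcal{A}=\mathcal{B}_{\mu}^{\gamma}-A_0^{\gamma}\mathcal{I}$ and substituting $u(x,t)=e^{A_0^{\gamma} t}v(x,t)$, the problem becomes
\begin{align*}
	v_t+\mathcal{B}_{\mu}^{\gamma} v=e^{-A_0^{\gamma} t}f,\qquad v|_{\Gamma\times(0,T)}=0,\qquad v(\cdot,0)=u_0.
\end{align*}
Since $\mathcal{B}_{\mu}^{\gamma}$ generates an analytic semigroup and $e^{-A_0^{\gamma}\cdot}f\in C([0,T];L^2(\Omega))$, this problem has a unique mild solution $v\in C^0([0,T];L^2(\Omega))$ given by Duhamel's formula $v(t)=e^{-t\mathcal{B}_{\mu}^{\gamma}}u_0+\intr{(0,t)}e^{-(t-s)\mathcal{B}_{\mu}^{\gamma}}e^{-A_0^{\gamma} s}f(s)\,ds$, and undoing the substitution yields the unique $u\in C^0([0,T];L^2(\Omega))$ solving \eqref{heat hardy nh}. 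To obtain the remaining regularity I would multiply the equation in \eqref{heat hardy nh} by $u$ and integrate over $\Omega\times(0,t)$, using
\begin{align*}
	\intr{\Omega}\left(-\Delta u-\frac{\mu}{\delta^2}u\right)u\,dx=\intr{\Omega}\left(|\nabla u|^2-\mu\frac{u^2}{\delta^2}\right)\,dx=\Phi_{\mu}^{\gamma}(u)-A_0^{\gamma}\norm{u}{L^2(\Omega)}^2
\end{align*}
and Young's inequality on $\intr{\Omega}fu\,dx$, to reach $\frac{d}{dt}\norm{u}{L^2(\Omega)}^2+2\Phi_{\mu}^{\gamma}(u)\leq(2A_0^{\gamma}+1)\norm{u}{L^2(\Omega)}^2+\norm{f}{L^2(\Omega)}^2$. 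Since $\Phi_{\mu}^{\gamma}(u)\geq0$, Gronwall's lemma controls $\norm{u}{C^0([0,T];L^2(\Omega))}$ by $\norm{u_0}{L^2(\Omega)}$ and $\norm{f}{L^2(Q)}$; inserting this back and integrating in time gives $\int_0^T\Phi_{\mu}^{\gamma}(u(s))\,ds<\infty$, which by \eqref{norm_equiv} is exactly $u\in L^2((0,T);H_{\mu}^{\gamma})$. Running the same estimate with $u_0=0$ and $f=0$ forces $u\equiv0$, hence uniqueness. (This computation is formal for merely $L^2$ data and would be justified in the standard way, by performing it first on smooth data and passing to the limit, or directly through the spectral resolution of $\mathcal{B}_{\mu}^{\gamma}$.)

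The only genuinely delicate point will be the generation statement of the first step in the critical case $\mu=\mu^*=1/4$: there \eqref{hardy_dist} ceases to be strict, $H_{\mu}^{\gamma}$ is strictly larger than $H_0^1(\Omega)$, and one must check that $a$ remains coercive on that larger space and that the operator it defines still coincides with $\mathcal{B}_{\mu}^{\gamma}$. Both facts hinge on the sharpness of the constant in \eqref{hardy_dist} together with the choice \eqref{A_zero_gamma} of $A_0^{\gamma}$ --- precisely the purpose of the construction already carried out above --- so once this is in place the rest is routine parabolic theory.
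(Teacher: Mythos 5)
Your proposal is correct and follows essentially the same route as the paper, which simply constructs the coercive form $\Phi_{\mu}^{\gamma}$, the space $H_{\mu}^{\gamma}$ and the operator $\mathcal{B}_{\mu}^{\gamma}$, and then appeals to standard Hille--Yosida/analytic semigroup theory adapted to $H_{\mu}^{\gamma}$. You merely make explicit the details the paper leaves to the reader (the Duhamel representation after the shift by $A_0^{\gamma}$, the energy estimate giving the $L^2((0,T);H_{\mu}^{\gamma})$ regularity, and the caveat about the critical case $\mu=1/4$), all of which are consistent with the paper's construction.
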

\section{Carleman estimate}\label{carleman sec}
\subsection{Choice of the weight $\sigma$}
The observability inequality \eqref{observ ineq} will be proved, as it is classical in controllability problems for parabolic equations, applying a Carleman estimate. 
\\
\\
\indent The main problem when designing a Carleman estimate is the choice of a proper weight function $\sigma(x,t)$. In our case, this $\sigma$ will be an adaptation of the one used in \citep{cazacu2014controllability}, that we conveniently modify in order to deal with the presence of the singularities distributed all over the boundary. In particular, the weight we propose is the following 
\begin{align}\label{sigma}
	\sigma(x,t)=\theta(t)\left(C_{\lambda}-\delta^2\psi-\left(\frac{\delta}{r_0}\right)^{\lambda}\phi\right), \;\;\; \phi=e^{\lambda\psi},
\end{align}
where 
\begin{align}\label{theta}
	\theta(t)=\left(\frac{1}{t(T-t)}\right)^3.
\end{align}
\indent Here, $C_{\lambda}$ is a positive constant large enough as to ensure the positivity of $\sigma$, and $\lambda$ is a positive parameter aimed to be large; besides, $r_0$ satisfies 
\begin{align}\label{r_0}
	\nonumber \displaystyle r_0 \leq\min & \left\{1, \frac{2|\psi|_{\infty}}{4|D\psi|_{\infty}+|D^2\psi|_{\infty}},\, \frac{1}{R_{\Omega}\sqrt{4|D\psi|_{\infty}^2+2|D^2\psi|_{\infty}}},\, \frac{|\psi|_{\infty}}{2(2-\gamma)|D\psi|_{\infty}},\, \left(\frac{M_2}{4|\mu||D\psi|_{\infty}}\right)^{1/(\gamma-1)}, \right.
	\\
	\nonumber &\displaystyle \left. \frac{1}{\sqrt{8\Do{\psi_1}|D\psi|_{\infty}/\varpi_0+3|D^2\psi|_{\infty}}},\, \frac{2|\psi|_{\infty}}{|D\psi|_{\infty}^2+(1+2|\psi|_{\infty})|D\psi|_{\infty}},\, \frac{1}{|D\psi|_{\infty}^2+2|D\psi|_{\infty}},  \right.
	\\
	&\displaystyle \left. \frac{3|\psi|_{\infty}^2}{4|D\psi|_{\infty}},\, \frac{1}{|D\psi|_{\infty}\sqrt{D_3|\psi|_{\infty}^2+D_4}}\right\},
\end{align}
where $\gamma$ is the parameter appearing in the Hardy inequalities presented above, with the particular choice $\gamma\in(1,2)$, while $M_2$ is a positive constant that will be introduced later. The choice of $r_0$ as in \eqref{r_0} is motivated by technical reasons that will be carefully justified throughout the paper.  
Finally, $\psi$ is a bounded regular function (at least $C^4(\overline{\Omega})$) defined as
\begin{align}\label{psi}
	\psi=\varpi(\psi_1+1),
\end{align}
with $\psi_1\in C^4(\overline{\Omega})$ and bounded, satisfying the conditions
\begin{align}\label{psi_1}
	\left\{\begin{array}{ll}
		\psi_1(x)=\delta(x) & \forall x\in\Omega_{r_0},
		\\ \psi_1(x)>r_0 & \forall x\in\Omega\setminus\overline{\Omega_{r_0}},
		\\ \psi_1(x)=r_0 & \forall x\in\Sigma_{r_0},
		\\ |\nabla\psi_1(x)|\geq\varpi_0>0 & \forall x\in\Omega\setminus\overline{\omega_0},
	\end{array}\right.
\end{align}
for $\varpi\varpi_0>2C_{\Omega}$, where $C_{\Omega}$ is the constant introduced in \cite[Section 2]{cazacu2014controllability}. Such function exists but its construction is not trivial. See \cite[Section 2]{cazacu2014controllability} for more details. In particular, under these conditions $\psi$ satisfies the following useful properties
\begin{align}\label{psi_prop}
	\left\{\begin{array}{ll}
		\psi(x)=1 & \forall x\in\Gamma,
		\\ \psi(x)>1 & \forall x\in\Omega,
		\\ |\nabla\psi(x)|\geq 2C_{\Omega} & \forall x\in\Omega\setminus\overline{\omega_0},
	\end{array}\right.
\end{align}
\indent In \eqref{psi_1} and \eqref{psi_prop}, $\omega_0\subset\subset\omega$ is a non-empty subset of the control region $\omega$; moreover, due to technical computations, we fix $\varpi$ such that
\begin{align}\label{varpi}
	\nonumber\varpi\geq\max\left\{1, \frac{1}{\varpi_0^2}\left(1+\frac{2\Do{\psi_1}}{r_0}+|D^2\psi|_{\infty}\right), \frac{2}{\varpi_0^2}\left(1+\frac{2\Do{\psi_1}}{r_0}\right), \frac{4\Do{\psi_1}}{\varpi_0^2}, \frac{24\Do{\psi_1}R_{\Omega}}{\varpi_0^2}, \frac{2}{\varpi_0}\right\},
	\\
\end{align}
where $R_{\Omega}$ is the diameter of the domain $\Omega$, while $\Do{\psi_1}$ is a positive constant that will be introduced later. Furthermore, throughout the paper, formally, for a given function $f$ we apply the notations
\begin{align}\label{notation f}
	\nonumber & |f|_{\infty}:=\norm{f}{L^{\infty}(\Omega)}, && |Df|_{\infty}:=\norm{\nabla f}{L^{\infty}(\Omega)},
	\\
	&D^2f(\xi,\xi):=\sum_{i,j=1}^N \partial^2_{x_ix_j}f\xi_i\xi_j, \;\;\; \forall\xi\in\RR^N, && |D^2f|_{\infty}:=\sum_{i,j=1}^N\norm{\partial^2_{x_ix_j}f}{L^{\infty}(\Omega)},
\end{align}
and we denote 
\begin{align}\label{notation set}
	\Omega_{r_0}:=\{\left.x\in\Omega\,\right|\delta(x)<r_0\}, \;\;\; \mathcal{O}:=\Omega\setminus\left(\overline{\omega_0}\cup\overline{\Omega_{r_0}}\,\right), \;\;\; \tilde{\mathcal{O}}:=\Omega\setminus\overline{\Omega_{r_0}}.
\end{align}

\subsection{Motivation for the choice of $\sigma$}

The weigh $\sigma$ that we propose for our Carleman estimates is not the standard one; we had to modify it in order to deal with some critical terms that emerge in our computations due to the presence of the singular potential. We justify here our choice, highlighting the reasons why the weights presented in previous works (\cite{cazacu2014controllability},\cite{ervedoza2008control},\cite{fursikov1996controllability}) are not suitable for the problem we consider.
\\
\indent In general, the weight used to obtain Carleman estimates for parabolic equations is assumed to be positive and to blow-up at the extrema of the time interval; besides, it has to be taken in separated variables. Therefore, we are looking for a function $\sigma(x,t)$ satisfying 
\begin{subequations}
\label{eq:tot}
\begin{empheq}[left={}\empheqlbrace]{align}
		&\sigma(x,t)=\theta(t)p(x), 	& & (x,t)\in Q, \\
		&\sigma(x,t)>0, & & (x,t)\in Q,  \label{sigma prop 1}\\
		&\displaystyle\lim_{t\to 0^+}\sigma(x,t) = \lim_{t\to T^-}\sigma(x,t) = +\infty, & & x\in\Omega. \label{sigma prop 2}
\end{empheq}
\end{subequations}
\\
The function $\theta$ is usually chosen in the form 
\begin{align*}
	\theta(t)=\left(\frac{1}{t(T-t)}\right)^k
\end{align*}
for $k\geq 1$, and this choice in particular ensures the validity of \eqref{sigma prop 2}; in our case we assume $k=3$ which, as we will remark later, is the minimum value for obtaining some important estimates that we need in the proof of the Carleman inequality. 
\\
\indent While the choice of $\theta$ is standard, the main difficulty when building a proper $\sigma$ is to identify a suitable $p(x)$ which is able to deal with the specificity of the equation we are analysing. 
\\
\indent In \cite{fursikov1996controllability}, Fursikov and Imanuvilov obtained the controllability of the standard heat equation employing a positive weight in the form 
\begin{align*}
	\sigma_1=\theta(t)\left(C_{\lambda}-e^{\lambda\psi}\right),
\end{align*}
with a function $\psi\in C^2(\overline{\Omega})$ satisfying
\begin{align*}
	\left\{\begin{array}{ll}
		\psi(x)>0, & x\in\Omega,
		\\ \psi(x)=0, & x\in\partial\Omega,
		\\ |\nabla\psi(x)|>0, & x\in\overline{\Omega}\setminus\omega_0.
	\end{array}\right.
\end{align*} 
\indent An example of a $\psi$ with this behaviour is shown in Figure 1 below; in particular, we notice that this function is required to be always strictly monotone outside of the control region. 
\begin{figure}[!h]
	\centering	
	\includegraphics[scale=0.3]{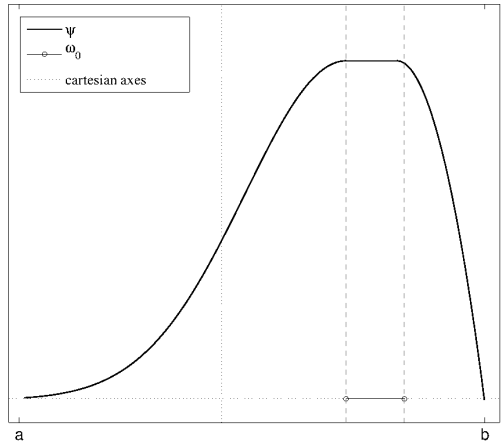}
	\caption{Function $\psi$ of Fursikov and Imanuvilov in one space dimension on the interval $(a,b)$}
\end{figure}
\\
\indent This standard weight was later modified by Ervedoza in \cite{ervedoza2008control}, for dealing with problems with interior quadratic singularities; in this case, the author applies the weight
\begin{align*}
	\sigma_2=\theta(t)\left(C_{\lambda}-\frac{1}{2}|x|^2-e^{\lambda\psi(x)}\right),
\end{align*}
with a function $\psi$ such that 
\begin{align*}
	\left\{\begin{array}{ll}
		\psi(x)=\ln(|x|), & x\in B(0,1),
		\\ \psi(x)=0, & x\in\partial\Omega,
		\\ \psi(x)>0, & x\in\Omega\setminus\overline{B}(0,1),
		\\ |\nabla\psi(x)|\geq\gamma>0, & x\in\overline{\Omega}\setminus\omega_0.
	\end{array}\right.
\end{align*} 
\begin{figure}[!h]
	\centering	
	\includegraphics[scale=0.33]{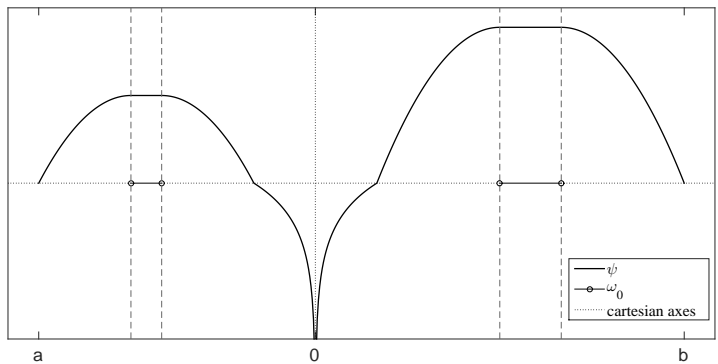}
	\caption{Function $\psi$ of Ervedoza in one space dimension on the interval $(a,b)$}
\end{figure}
\\
\indent This choice is motivated by some critical terms appearing due to the presence of the potential, that must be absorbed outside $\omega$ in the Carleman estimate (see \cite[Eq. 2.15]{ervedoza2008control}). 
\\
\indent In particular, in order to take advantage of the Hardy inequality, the author needs to get rid of singular terms in the form $\Delta\sigma/|x|^2$ and $(x\cdot\nabla\sigma)/|x|^4$. The weight proposed allows to deal with this terms; indeed near the singularity, when $\lambda$ is large enough $\sigma_2$ behaves like 
\begin{align*}
	\sigma_2\sim\theta(t)\left(C_{\lambda}-\frac{1}{2}|x|^2\right),
\end{align*}
which is the weight employed by Vancostenoble and Zuazua in \cite{vancostenoble2009hardy} for their proof of the controllability of the heat equation with a singular potential and which satisfies
$\nabla\sigma_2\sim x$ and $\Delta\sigma_2\sim C$ as $x\to 0$. On the other hand, away from the origin, where no correction is needed, $\sigma_2$ maintains the behaviour of the classical weight $\sigma_1$. 
\\
\indent A further modification is proposed by Cazacu in \cite{cazacu2014controllability}, in the case of an equation with boundary singularity. In this case, indeed, the terms $\Delta\sigma/|x|^2$ and $(x\cdot\nabla\sigma)/|x|^4$ generates singularities that cannot be absorbed in a neighbourhood of the origin employing $\sigma_2$, since this weight involves a function $\psi$ which is assumed to be zero on the boundary. Therefore, the author proposes a new weight 
\begin{align*}
	\sigma_3=\theta(t)\left(C_{\lambda}-|x|^2\psi-\left(\frac{|x|}{r_0}\right)^{\lambda}e^{\lambda\psi}\right),
\end{align*}
where $\psi$ is now chosen as in \eqref{psi}, with the fundamental property of being constant and positive on the boundary. 
\begin{figure}[!h]
	\centering
	\includegraphics[scale=0.33]{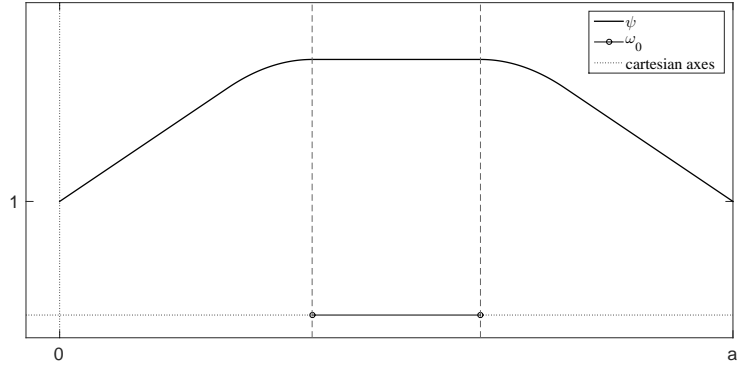}
	\caption{Function $\psi$ of Cazacu in one space dimension on the interval $(0,a)$}
\end{figure}
\\
\indent Finally, when dealing as in our case with a singularity distributed all over the boundary the weights presented above do not allow anymore to manage properly the terms containing the singularities, since they now have a different nature. Therefore, we need to introduce further modifications in the weight we want to employ, designing it in a way that could compensate this kind of degeneracies. At this purpose, it is sufficient to modify $\sigma_3$ replacing the terms of the form $|x|$ with the distance function $\delta$; being still in the case of boundary singularities the function $\psi$ introduced in \cite{cazacu2014controllability} (see \eqref{psi} above) turns out to be a suitable one also in our case.
\\
\indent For concluding, we want to emphasise the fact that all the changes in the classical weight we introduced above are purely local, around the points where the singularity of the potential arises. This, of course, because as long as the potential remains bounded it can be handled with the same techniques as for the classical heat equation.
\\
\\
We now have all we need for introducing the Carleman estimate. 
\begin{theorem}\label{carleman thm}
Let $\sigma$ be the weight defined in \eqref{sigma}. There exist two positive constants $\lambda_0$ and $\mathcal{M}$ such that for any $\lambda\geq\lambda_0$ there exists $R_0=R_0(\lambda)$ such that for any $R\geq R_0$ and for any solution $v$ of \eqref{heat hardy adj} it holds 
\begin{align}\label{carleman}
	\nonumber R\int_Q &\theta e^{-2R\sigma}\left(\delta^{2-\gamma}|\nabla v|^2+A_1\frac{v^2}{\delta^{\gamma}}\right)\,dxdt + \lambda R\intr{\Omega_{r_0}\times(0,T)} \theta\left(\frac{\delta}{r_0}\right)^{\lambda-2}e^{-2R\sigma}|\nabla v|^2\,dxdt  
	\\
	\nonumber + \lambda^2 & R\intr{\mathcal{O}\times(0,T)} \theta\left(\frac{\delta}{r_0}\right)^{\lambda}\phi e^{-2R\sigma}|\nabla v|^2\,dxdt + R^3\intr{\Omega_{r_0}\times(0,T)}\theta^3\delta^2e^{-2R\sigma}v^2\,dxdt 
	\\
	\nonumber + \lambda^4 & R^3\intr{\mathcal{O}\times(0,T)} \theta^3\left(\frac{\delta}{r_0}\right)^{3\lambda}\phi^3e^{-2R\sigma}v^2\,dxdt 
	\\
	\nonumber &\leq \mathcal{M}\left(\lambda^4R^3\intr{\omega_0\times(0,T)}  \theta^3\left(\frac{\delta}{r_0}\right)^{3\lambda}\phi^3e^{-2R\sigma}v^2\,dxdt + \lambda^2R\intr{\omega_0\times(0,T)} \theta\left(\frac{\delta}{r_0}\right)^{\lambda}\phi e^{-2R\sigma}|\nabla v|^2\,dxdt\right)
	\\
\end{align}
\end{theorem}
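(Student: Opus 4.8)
The plan is to follow the classical Fursikov--Imanuvilov scheme, adapted to the singular potential as in \cite{cazacu2014controllability,ervedoza2008control}. First I would introduce the conjugated variable $w:=e^{-R\sigma}v$, which is the natural object on which the weighted estimate is written, and compute $e^{-R\sigma}(\partial_t+\Delta+\mu\delta^{-2})v$ in terms of $w$. This produces the decomposition $P_1 w+P_2 w=P_3 w$, where $P_1$ collects the (formally) skew-adjoint terms, $P_2$ the (formally) self-adjoint terms, and $P_3$ the remainder; here the singular potential $\mu\delta^{-2}$ is kept inside the self-adjoint part so that the Hardy--Poincar\'e inequalities of Section \ref{h-p_ineq} become available. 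Squaring the identity in $L^2(Q)$ and expanding gives $\|P_1 w\|^2+\|P_2 w\|^2+2\langle P_1 w,P_2 w\rangle=\|P_3 w\|^2$, and the whole proof reduces to extracting a positive lower bound from the cross term $2\langle P_1 w,P_2 w\rangle$ after integration by parts in both $x$ and $t$.

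Next I would carry out the integration by parts in the double sum defining $\langle P_1 w,P_2 w\rangle$. This is the long computational core: each of the products of first-order-in-time, first-order-in-space, zero-order, and second-order operators must be integrated by parts, the boundary terms on $\Gamma\times(0,T)$ and at $t=0,T$ must be shown to vanish (using $v=0$ on $\Gamma$, the blow-up of $\theta$ at $t=0,T$, and the precise form of $\sigma$ which decays like $\delta^2$ near $\Gamma$), and the interior terms must be organized according to whether they are supported near the boundary (in $\Omega_{r_0}$, where $\sigma$ behaves like $\theta(C_\lambda-\delta^2\psi-(\delta/r_0)^\lambda\phi)$ and $\delta^{-2}$ is singular) or away from it (in $\tilde{\mathcal O}$, where the potential is bounded and the classical analysis applies). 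The choices of $r_0$ in \eqref{r_0}, of $\varpi$ in \eqref{varpi}, and of the exponent $k=3$ in $\theta$ are calibrated precisely so that, after this bookkeeping, the ``good'' terms — the left-hand side of \eqref{carleman}, namely the gradient and $L^2$ terms weighted by $R\theta$, $\lambda R\theta(\delta/r_0)^{\lambda-2}$, $\lambda^2R\theta(\delta/r_0)^\lambda\phi$, $R^3\theta^3\delta^2$, and $\lambda^4R^3\theta^3(\delta/r_0)^{3\lambda}\phi^3$ — dominate all the remaining cross terms, up to lower-order contributions absorbed for $R$ large.

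The crucial analytic input, and where Propositions \ref{hardy 1 prop}, \ref{hardy 2 prop}, \ref{hardy 3 prop} enter, is the treatment near the boundary: several terms generated by $\langle P_1 w,P_2 w\rangle$ are of the form $\int \delta^{2-\gamma}|\nabla w|^2$ or $\int \mu\, w^2/\delta^2$ with the ``wrong'' sign or a dangerous constant, and they cannot be controlled pointwise. Instead, one uses \eqref{hardy 3} (equivalently the combination of \eqref{hardy 1} and \eqref{hardy 2}) to trade $\int(|\nabla w|^2-\mu w^2/\delta^2)$ against $\int\delta^{2-\gamma}|\nabla w|^2+A_1\int w^2/\delta^\gamma$ modulo a harmless $\int w^2$ term; this is exactly why the left-hand side of \eqref{carleman} is stated in the weighted-Hardy form $\delta^{2-\gamma}|\nabla v|^2+A_1 v^2/\delta^\gamma$ rather than in terms of $|\nabla v|^2$ and $v^2/\delta^2$ separately, and it is the restriction $\gamma\in(1,2)$ together with $\mu\le 1/4$ that makes the constants work. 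After the interior estimate is closed, a standard cut-off argument localizes the remaining bad terms into $\omega_0\times(0,T)$: one bounds $\int_{\tilde{\mathcal O}\setminus\omega_0}$-type residuals by the $\omega_0$ integrals on the right of \eqref{carleman} using a function supported in $\omega$ and the property $|\nabla\psi|\ge 2C_\Omega$ off $\omega_0$, and finally one reconverts from $w$ back to $v$ via $w=e^{-R\sigma}v$, noting that $|\nabla w|^2$ and $e^{-2R\sigma}|\nabla v|^2$ differ by terms already on the left-hand side. The main obstacle is unquestionably the sign analysis of the cross term: keeping track of the competing powers of $\lambda$, $R$, $\theta$, $\delta$, and $\phi$ so that every unwanted term is strictly dominated is delicate, and it is precisely this balancing that forces the elaborate constraints \eqref{r_0} and \eqref{varpi} on $r_0$ and $\varpi$.
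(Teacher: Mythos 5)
Your proposal follows essentially the same route as the paper: conjugation $z=e^{-R\sigma}v$, splitting the conjugated operator into self-adjoint and skew-adjoint parts plus a remainder (the paper's $\mathrm{S}z+\mathrm{A}z+\mathrm{P}z=0$), expanding the square, integrating the cross term by parts, invoking the Hardy--Poincar\'e inequality \eqref{hardy 3} (applied to $z\sqrt{\psi}$) to handle the singular boundary terms, and closing via the pointwise weight estimates calibrated by \eqref{r_0}, \eqref{varpi} and $k=3$ before returning to $v$. The only substantive point you omit is the regularisation of the potential ($\mu/(\delta+1/n)^2$) needed to justify the formal integrations by parts, which the paper flags explicitly at the start of Section \ref{carleman_proof}.
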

The proof of Theorem \ref{carleman thm} is very technical and will be presented in Section \ref{carleman_proof}. It relies on several technical Lemmas that we are going to prove in the appendix.  
\section{Proof of the observability inequality \eqref{observ ineq} and of the controllability Theorem \ref{control thm}}\label{obs_ineq_proof}
We now apply the Carleman estimate we just obtained for proving the observability inequality \eqref{observ ineq}. This inequality will then be employed in the proof of our main result, Theorem \ref{control thm}. 
\begin{proof}[Prooof of the observability inequality \eqref{observ ineq}] 
Let us fix $\lambda\geq\lambda_0$ and $R\geq R_0(\lambda)$ such that \eqref{carleman} holds. These parameters now enter in the constant $\mathcal{M}$; in particular we have 
\begin{align*}
	\int_Q &\theta e^{-2R\sigma}\frac{v^2}{\delta^{\gamma}}\,dxdt \leq \mathcal{M}\left(\,\intr{\omega_0\times(0,T)}  \theta^3\phi^3e^{-2R\sigma}v^2\,dxdt + \intr{\omega_0\times(0,T)} \theta\phi e^{-2R\sigma}|\nabla v|^2\,dxdt\,\right).
\end{align*}
Now, it is straightforward to check that there exists a positive constant $\mathcal{P}$ such that
\begin{align*}
	\begin{array}{ll}
		\theta e^{-2R\sigma}\frac{1}{\delta^{\gamma}}\geq\mathcal{P}, & (x,t)\in\Omega\times\left[\frac{T}{4},\frac{3T}{4}\right],
		\\
		\\
		\theta^3\phi^3 e^{-2R\sigma}\leq\mathcal{P}, & (x,t)\in\omega_0\times(0,T),
		\\
		\\
		\theta\phi e^{-2R\sigma}\leq\mathcal{P}e^{-R\sigma}, & (x,t)\in\omega_0\times(0,T).
	\end{array}
\end{align*}
Thus the inequality above becomes
\begin{align*}
	\int_{\frac{T}{4}}^{\frac{3T}{4}}\int_{\Omega} v^2\,dxdt\leq\mathcal{N} \left(\,\intr{\omega_0\times(0,T)}  v^2\,dxdt + \intr{\omega_0\times(0,T)} e^{-R\sigma}|\nabla v|^2\,dxdt\,\right).
\end{align*}
Moreover, multiplying equation \eqref{heat hardy adj} by $v$ and integrating over $\Omega$ we obtain
\begin{align*}
	\frac{1}{2}\frac{d}{dt}\int_{\Omega} v^2\,dx = \int_{\Omega} |\nabla v|^2\,dx - \mu\int_{\Omega}\frac{v^2}{\delta^2}\,dx,
\end{align*}
which, applying \eqref{hardy_dist}, implies
\begin{align*}
	\frac{d}{dt}\int_{\Omega} v^2\,dx \geq -C\int_{\Omega} v^2\,dx.
\end{align*}
Hence, the function $t\mapsto e^{2Ct}\norm{v(\cdot,t)}{L^2(\Omega)}$ is increasing, that is
\begin{align*}
	e^{-2CT}\int_{\Omega} v(x,0)^2\,dx\leq \int_{\Omega} v(x,t)^2\,dx,
\end{align*}
and, integrating in time between $T/4$ and $3T/4$ we have
\begin{align*}
	\frac{T}{2}e^{-2CT}\int_{\Omega} v(x,0)^2\,dx\leq \int_{\frac{T}{4}}^{\frac{3T}{4}}\int_{\Omega} v(x,t)^2\,dx.
\end{align*}
Thus, we obtain the inequality
\begin{align*}
	\int_{\Omega} v(x,0)^2\,dxdt\leq\frac{2\mathcal{N}e^{2CT}}{T} \left(\,\intr{\omega_0\times(0,T)}  v^2\,dxdt + \intr{\omega_0\times(0,T)} e^{-R\sigma}|\nabla v|^2\,dxdt\,\right).
\end{align*}
Therefore to conclude the proof of \eqref{observ ineq}, it is sufficient to apply the following lemma:
\begin{lemma}[Cacciopoli’s inequality]\label{caccioppoli lemma}
Let $\bar{\sigma}: (0,T)\times\omega_0\to\RR_+^*$ be a smooth non-negative function such that
\begin{align*}
	\bar{\sigma}(x,t)\to +\infty, \;\; \textrm{ as } t\to 0^+ \;\; \textrm{ and as } t\to T^-,
\end{align*}
and let $\mu\leq\mu^*$. Then, there exists a constant $\Upsilon$ independent of $\mu$ such that any solution $v$ of \eqref{heat hardy adj} satisfies
\begin{align}\label{caccioppoli}
	\intr{\omega_0\times(0,T)} e^{-R\bar{\sigma}}|\nabla v|^2\,dxdt \leq \Upsilon\intr{\omega\times(0,T)}  v^2\,dxdt. 
\end{align}
\end{lemma}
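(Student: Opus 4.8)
The statement \eqref{caccioppoli} is a Caccioppoli-type energy estimate, and the plan is to obtain it by the classical multiplier argument, paying attention to two features specific to the present setting: the weight $\bar\sigma$ is prescribed only on $\omega_0$, and the final constant must be independent of $\mu$. First I would fix a cut-off $\chi\in C_0^\infty(\omega)$ with $0\le\chi\le 1$ and $\chi\equiv 1$ on $\omega_0$, chosen so that $\mathrm{supp}\,\chi$ is a compact subset of $\Omega$ (possible since, in our construction, $\omega_0\subset\subset\Omega$); then $d_0:=\mathrm{dist}(\mathrm{supp}\,\chi,\Gamma)>0$, the potential $\mu/\delta^2$ is bounded on $\mathrm{supp}\,\chi$, and by interior parabolic regularity $v$ is, on $\mathrm{supp}\,\chi\times(0,T)$, regular enough for the integrations by parts below to be legitimate (this is the region where the equation is a uniformly parabolic one with bounded coefficients, so the singular functional framework $H_\mu^\gamma$ plays no role locally). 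Since $\bar\sigma$ is continuous, positive and blows up as $t\to0^+,T^-$ uniformly on $\omega_0$ — as is the case for the weight $\sigma$ of \eqref{sigma} — I can choose a smooth function $\rho=\rho(t)$, depending on time only, with $0<\rho(t)\le\bar\sigma(x,t)$ on $\omega_0\times(0,T)$ and $\rho(t)\to+\infty$ as $t\to0^+,T^-$; for $\bar\sigma=\sigma$ one may simply take $\rho(t)=c\,\theta(t)$ with $c$ small, $\theta$ as in \eqref{theta}. Note $e^{-R\rho}\le 1$ on $(0,T)$ and $e^{-R\bar\sigma}\le e^{-R\rho}$ on $\omega_0$.

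Next I would multiply the adjoint equation \eqref{heat hardy adj} by $\chi^2 e^{-R\rho}v$ and integrate over $\Omega\times(0,T)$. Because $e^{-R\rho(t)}\to 0$ as $t\to0^+,T^-$ while $\norm{v(\cdot,t)}{L^2(\Omega)}$ stays bounded, the boundary terms in time drop; since $\chi$ is compactly supported in $\Omega$ there is no boundary term in space; and since $e^{-R\rho}$ does not depend on $x$, the only spatial derivative surviving in the cross term is $\nabla\chi$. One is thus led to
\begin{align*}
	\int_0^T\!\!\int_\Omega \chi^2 e^{-R\rho}|\nabla v|^2\,dxdt = \frac{R}{2}\int_0^T\!\!\int_\Omega \chi^2 e^{-R\rho}\rho'(t)\,v^2\,dxdt - 2\int_0^T\!\!\int_\Omega \chi\, e^{-R\rho}\,v\,\nabla\chi\cdot\nabla v\,dxdt + \int_0^T\!\!\int_\Omega \frac{\mu}{\delta^2}\,\chi^2 e^{-R\rho}v^2\,dxdt.
\end{align*}
A Young inequality on the middle term, $2\chi|v|\,|\nabla\chi|\,|\nabla v|\le\frac12\chi^2|\nabla v|^2+2|\nabla\chi|^2 v^2$, then absorbs half of the left-hand side, leaving
\begin{align*}
	\frac12\int_0^T\!\!\int_\Omega \chi^2 e^{-R\rho}|\nabla v|^2\,dxdt \le \frac{R}{2}\int_0^T\!\!\int_\Omega \chi^2 e^{-R\rho}|\rho'(t)|\,v^2\,dxdt + 2\int_{\omega\times(0,T)} |\nabla\chi|^2 v^2\,dxdt + \int_0^T\!\!\int_\Omega \frac{\mu}{\delta^2}\,\chi^2 e^{-R\rho}v^2\,dxdt.
\end{align*}

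Finally I would bound the three terms on the right. Since $\rho(t)\to+\infty$ whereas $|\rho'(t)|$ blows up only polynomially at the endpoints, the function $e^{-R\rho}|\rho'|$ is bounded on $(0,T)$ by some $c_R$; together with $e^{-R\rho}\le 1$ and $|D\chi|_\infty<\infty$, the first two terms are at most $\big(\tfrac{R}{2}c_R+2|D\chi|_\infty^2\big)\int_{\omega\times(0,T)}v^2\,dxdt$. For the potential term I would exploit the sign of $\mu$: if $\mu\le 0$ it is non-positive and may be discarded, while if $0<\mu\le\mu^*=1/4$ then $\mu/\delta^2\le\mu^*/d_0^2$ on $\mathrm{supp}\,\chi$, so the term is bounded by $\tfrac{1}{4d_0^2}\int_{\omega\times(0,T)}v^2\,dxdt$; in both cases the bound is independent of $\mu$, which is the reason $\Upsilon$ can be taken $\mu$-independent. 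Collecting these estimates and using $\chi\equiv1$ on $\omega_0$ together with $e^{-R\bar\sigma}\le e^{-R\rho}$ there,
\begin{align*}
	\int_{\omega_0\times(0,T)} e^{-R\bar\sigma}|\nabla v|^2\,dxdt \le \int_0^T\!\!\int_\Omega \chi^2 e^{-R\rho}|\nabla v|^2\,dxdt \le \Upsilon\int_{\omega\times(0,T)} v^2\,dxdt,
\end{align*}
which is \eqref{caccioppoli}. The only genuinely delicate points are the replacement of $\bar\sigma$ by a purely time-dependent minorant $\rho$ — so that the multiplier and the integrations by parts are meaningful on a full neighbourhood of $\omega_0$, where $\bar\sigma$ is a priori not defined — and the $\mu$-uniform treatment of the singular potential on the support of the cut-off; everything else is routine.
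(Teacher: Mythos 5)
Your argument is correct and is precisely the standard cut-off/multiplier proof that the paper delegates to the cited reference (\cite[Lemma 3.3]{vancostenoble2008null}): localisation by $\chi$ away from $\Gamma$ so that the potential is bounded and interior parabolic regularity justifies the integrations by parts, a purely time-dependent minorant $\rho$ of the weight to kill the temporal boundary terms, Young's inequality to absorb the gradient, and the sign/size dichotomy on $\mu$ to obtain a $\mu$-independent constant. The only point worth recording is that the construction of $\rho$ implicitly uses that $\bar{\sigma}$ blows up uniformly on $\omega_0$ as $t\to 0^+,T^-$, which holds for the weight $\sigma$ of \eqref{sigma} since its spatial factor is bounded below by a positive constant on $\overline{\omega_0}$, and this is the only case in which the lemma is applied.
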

Lemma \ref{caccioppoli lemma} is a trivial adaptation of an analogous result, \cite[Lemma 3.3]{vancostenoble2008null}, and its proof is left to the reader. It is now straightforward that, applying \eqref{caccioppoli} for $\sigma$ as in \eqref{sigma} we finally get
\begin{align*}
	\int_{\Omega} v(x,0)^2\,dxdt\leq C_T\intr{\omega_0\times(0,T)}  v^2\,dxdt,
\end{align*}
that clearly implies \eqref{observ ineq}, due to the definition of $\omega_0$.
\end{proof}

\begin{proof}[Proof of Theorem \eqref{control thm}]
Once the observability inequality \eqref{observ ineq} is known to hold, we can immediately obtain the controllability of our equation through a control $f\in L^2(\omega\times(0,T))$. To do that, we are going to introduce the functional
\begin{align}\label{control functional}
	J(v_T):=\frac{1}{2}\intr{\omega\times (0,T)} v^2\,dxdt + \int_{\Omega} v(x,0)u_0(x)\,dx,
\end{align}
defined over the Hilbert space 
\begin{align}
	H:=\left\{\left.v_T\in L^2(\Omega)\,\right| \textrm{ the solution } v \textrm{ of } \eqref{heat hardy adj} \textrm{ satisfies } \intr{\omega\times(0,T)} v^2\,dxdt\leq +\infty \right\}.
\end{align}
To be more precise, $H$ is the completion of $L^2(\Omega)$ with respect to the norm $\,\left(\int_0^T\int_{\omega} v^2\,dxdt\right)^{1/2}.$
\indent Observe that $J$ is convex and, according to \eqref{observ ineq}, it is also continuous in $H$; on the other hand, again \eqref{observ ineq} gives us also the coercivity of $J$. Therefore, there exists $v^*\in H$ minimizing $J$. 
\\
The corresponding Euler-Lagrange equation is 
\begin{align}\label{euler-lagrange}
	\intr{\omega\times (0,T)} v(x,t)F(x,t)\,dxdt + \int_{\Omega}u_0(x)v(x,0)\,dx = 0,
\end{align}
where $F(x,t):=v^*(x,t)\chi_{\omega}$. $F$ will be our control function; we observe that, by definition $F\in L^2(\omega\times (0,T))$. Now, considering equation \eqref{heat hardy nh} with $f=F$, multiplying it by $v$ and integrating by parts, we get 
\begin{align*}
	\int_0^1 u(x,T)v_T(x)\,dx = \intr{\omega\times (0,T)} v(x,t)F(x,t)\,dxdt + \int_{\Omega}u_0(x)v(x,0)\,dx,
\end{align*}
for any $v_T\in L^2(\Omega)$. Hence, from \eqref{euler-lagrange} we immediately conclude $u(x,T)=0$.
\end{proof}
\section{Non existence of a control in the supercritical case}\label{stabilisation}
\indent As we mentioned before, in \cite{cabre1999existence} is proved that in the super-critical case, i.e. for $\mu>1/4$, the Cauchy problem for our singular heat equation is severely ill-posed. However, a priori this fact does not exclude that, given $u_0\in L^2(\Omega)$, it is possible to find a control $f\in L^2((0,T);L^2(\Omega))$ localised in $\omega$ such that there exists a solution of \eqref{heat hardy nh}. If this fact occurs, it would mean that we can prevent blow-up phenomena by acting on a subset of the domain. 
\\
\indent However, as we are going to show in this section, this control function $f$ turns out to be impossible to find for $\mu>1/4$ and, in this case, we cannot prevent the system from blowing up. Therefore, the upper bound $1/4$ for the Hardy constant $\mu$ shows up to be sharp for control. 
\\
\indent The proof of this fact will rely on an analogous result presented in \cite{ervedoza2008control}; therefore, following the ideas of optimal control, for any $u_0\in L^2(\Omega)$ we consider the functional
\begin{align*}
	J_{u_0}(u,f\,):=\frac{1}{2}\int_Q |u(x,t)|^2\,dxdt+\frac{1}{2}\int_0^T \norm{f(t)}{L^2(\Omega)}^2\,dt,
\end{align*}
defined on the set
\begin{align*}
	\mathcal{C}(u_0):=\left\{ \left.(u,f\,)\in L^2((0,T),H_0^1(\Omega))\times L^2((0,T),L^2(\Omega))\;\right| u \textrm{ satisfies } \eqref{heat hardy nh}\right\}.
\end{align*}
We say that it is possible to stabilise system \eqref{heat hardy nh} if we can find a constant $A$ such that
\begin{align*}
	\inf_{(u,f\,)\,\in\,\mathcal{C}(u_0)} J_{u_0}(u,f\,)\leq A\norm{u_0}{L^2(\Omega)}^2.
\end{align*}
Now, for $\varepsilon>0$, we approximate \eqref{heat hardy nh} by the system 
\begin{align}\label{heat hardy nh eps}
	\renewcommand*{\arraystretch}{1.3}
	\left\{\begin{array}{ll}
		\displaystyle u_t-\Delta u-\frac{\mu}{\delta^2+\varepsilon^2}u=f, & (x,t)\in Q
		\\ u=0, & (x,t)\in\Gamma\times(0,T)
		\\ u(x,0)=u_0(x), & x\in\Omega,
	\end{array}\right.
\end{align}
\indent Due to the boundedness of the potential, \eqref{heat hardy nh eps} is well-posed; therefore, we can define the functional
\begin{align*}
	J_{u_0}^{\varepsilon}(f):=\frac{1}{2}\int_Q |u(x,t)|^2\,dxdt+\frac{1}{2}\int_0^T \norm{f(t)}{L^2(\Omega)}^2\,dt,
\end{align*}
where $f\in L^2((0,T);L^2(\Omega))$ is localised in $\omega$ and $u$ is the corresponding solution of \eqref{heat hardy nh eps}. We are going to prove the following
\begin{theorem}\label{stablisation thm}
Assume that $\mu>1/4$. There is no constant $A$ such that, for all $\varepsilon>0$ and all $u_0\in L^2(\Omega)$,
\begin{align*}
	\inf_{f\in L^2((0,T);L^2(\Omega))} J_{u_0}^{\varepsilon}(f) \leq A\norm{u_0}{L^2(\Omega)}^2.
\end{align*}
\end{theorem}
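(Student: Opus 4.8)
## Proof proposal for Theorem~\ref{stablisation thm}

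The plan is to argue by contradiction, exploiting the blow-up mechanism established in \cite{cabre1999existence} together with a duality/compactness argument, following the strategy of \cite{ervedoza2008control}. Assume such a constant $A$ exists, uniform in $\varepsilon>0$ and in $u_0\in L^2(\Omega)$. First I would fix the ground state structure of the supercritical potential: since $\mu>1/4$, the quadratic form $u\mapsto\int_\Omega(|\nabla u|^2-\mu u^2/\delta^2)\,dx$ fails to be bounded below on $H_0^1(\Omega)$, so the truncated operators $\mathcal{A}_\varepsilon:=-\Delta-\mu/(\delta^2+\varepsilon^2)\mathcal{I}$ have a first eigenvalue $\lambda_1^\varepsilon$ with associated positive normalized eigenfunction $\phi_1^\varepsilon$, and $\lambda_1^\varepsilon\to-\infty$ as $\varepsilon\to 0^+$ (this divergence is the quantitative form of the ill-posedness). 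I would establish this divergence via the Rayleigh quotient, testing with a fixed function concentrated near $\Gamma$ and using that $\int_\Omega u^2/\delta^2\,dx$ can exceed $\frac14\int_\Omega|\nabla u|^2\,dx$ on a suitable test function once $\mu>1/4$ is available — more precisely, by the sharpness statements underlying Proposition~\ref{hardy_dist prop}, for any $\mu>1/4$ there is $u\in H_0^1(\Omega)$ with $\int_\Omega|\nabla u|^2<\mu\int_\Omega u^2/\delta^2$, and rescaling/truncating this near the boundary forces $\lambda_1^\varepsilon\to-\infty$.

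The core of the argument is then a duality identity. Choosing $u_0=\phi_1^\varepsilon$, the unique solution of \eqref{heat hardy nh eps} with $f=0$ is $u(x,t)=e^{-\lambda_1^\varepsilon t}\phi_1^\varepsilon(x)$, which grows like $e^{|\lambda_1^\varepsilon|t}$. For a general control $f$ localized in $\omega$, I would pair the solution $u$ of \eqref{heat hardy nh eps} against the backward eigenmode $w(x,t):=e^{-\lambda_1^\varepsilon(T-t)}\phi_1^\varepsilon(x)$, which solves $-w_t+\mathcal{A}_\varepsilon w=0$ with $w(\cdot,T)=\phi_1^\varepsilon$. Integration by parts over $Q$ gives
\begin{align*}
	\int_\Omega u(x,T)\phi_1^\varepsilon(x)\,dx = e^{-\lambda_1^\varepsilon T}\int_\Omega u_0(x)\phi_1^\varepsilon(x)\,dx + \intr{\omega\times(0,T)} e^{-\lambda_1^\varepsilon(T-t)}\phi_1^\varepsilon(x)f(x,t)\,dxdt.
\end{align*}
With $u_0=\phi_1^\varepsilon$ this yields a lower bound: by Cauchy--Schwarz on the control term and the fact that $\int_\Omega u(x,T)\phi_1^\varepsilon\,dx \le \|u(\cdot,T)\|_{L^2(\Omega)}\le (2J_{u_0}^\varepsilon(f))^{1/2}$ (using $\|u(\cdot,T)\|_{L^2}^2 \le C\int_Q|u|^2$ by a standard energy estimate for \eqref{heat hardy nh eps}, or else working directly with the $L^2(Q)$ term), one gets
\begin{align*}
	e^{-\lambda_1^\varepsilon T} \le C\Big((J_{u_0}^\varepsilon(f))^{1/2} + \big(\textstyle\intr{\omega\times(0,T)} e^{-2\lambda_1^\varepsilon(T-t)}(\phi_1^\varepsilon)^2\,dxdt\big)^{1/2}(J_{u_0}^\varepsilon(f))^{1/2}\Big),
\end{align*}
hence $J_{u_0}^\varepsilon(f)\gtrsim e^{-2\lambda_1^\varepsilon T}/\big(1+\intr{\omega\times(0,T)} e^{-2\lambda_1^\varepsilon(T-t)}(\phi_1^\varepsilon)^2\big)$. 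Taking the infimum over $f$ and comparing with $A\|u_0\|_{L^2}^2=A$ (since $\phi_1^\varepsilon$ is normalized) produces a contradiction once $\lambda_1^\varepsilon\to-\infty$, \emph{provided} the weight $\intr{\omega\times(0,T)}e^{-2\lambda_1^\varepsilon(T-t)}(\phi_1^\varepsilon)^2$ does not grow faster than $e^{-2\lambda_1^\varepsilon T}$ — which holds since $T-t\le T$ and $\phi_1^\varepsilon$ is $L^2$-normalized, so that quantity is $\le e^{-2\lambda_1^\varepsilon T}$ up to constants. Thus $J_{u_0}^\varepsilon(f)\gtrsim e^{-2\lambda_1^\varepsilon T}/e^{-2\lambda_1^\varepsilon T}$ is not yet a contradiction; the sharper route is to use that on $\omega$, which is away from $\Gamma$ and hence from where $\phi_1^\varepsilon$ concentrates, $\|\phi_1^\varepsilon\|_{L^2(\omega)}\to 0$ as $\varepsilon\to 0$, so the control weight is in fact $o(e^{-2\lambda_1^\varepsilon T})$, forcing $\inf_f J_{u_0}^\varepsilon(f)\to+\infty$ and contradicting the bound by the constant $A$.

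The main obstacle will be the second part of the last paragraph: controlling the mass $\|\phi_1^\varepsilon\|_{L^2(\omega)}$ of the first eigenfunction on the control set as $\varepsilon\to 0$, and more precisely showing it decays fast enough (exponentially in $|\lambda_1^\varepsilon|$, or at least fast enough to beat the other factors) to defeat any fixed $A$. This requires a quantitative localization estimate for $\phi_1^\varepsilon$ near $\Gamma$ — an Agmon-type exponential decay estimate for eigenfunctions of $-\Delta-\mu/(\delta^2+\varepsilon^2)$ away from the boundary, or, following \cite{ervedoza2008control} more literally, an explicit subsolution/comparison argument near $\Gamma$ using the ODE behavior of the radial-in-$\delta$ profile. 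I would carry this out by freezing coordinates near a boundary point, comparing $\phi_1^\varepsilon$ with the explicit solutions $\delta^{\alpha_\pm}$ of the one-dimensional Euler equation $-y''-\mu y/\delta^2=\lambda_1^\varepsilon y$ (whose exponents $\alpha_\pm=\frac12\pm i\sqrt{\mu-1/4}$ become oscillatory precisely when $\mu>1/4$), and using the maximum principle to propagate smallness from $\partial\Omega_{r_0}$ inward; the oscillatory nature of these exponents for $\mu>1/4$ is exactly what encodes the instability and must be handled with care. With that localization in hand, the contradiction closes and the theorem follows.
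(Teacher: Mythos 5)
Your overall strategy is the paper's: argue by contradiction, take $u_0=\phi_0^{\varepsilon}$ (the normalized ground state of the truncated operator $\mathcal{L}^{\varepsilon}$), show $\lambda_0^{\varepsilon}\to-\infty$ via the sharpness of the Hardy constant $1/4$ together with a boundary rescaling, project the controlled dynamics onto the ground-state mode via Duhamel, and use that the control only feeds that mode through the factor $\norm{\phi_0^{\varepsilon}}{L^2(\omega)}$. Two points need fixing. First, your primary route to the lower bound --- pairing at time $T$ and invoking $\norm{u(\cdot,T)}{L^2(\Omega)}^2\leq C\int_Q u^2$ by a ``standard energy estimate'' --- fails: for \eqref{heat hardy nh eps} the Gronwall constant in that estimate is of order $e^{C\mu T/\varepsilon^2}$ and is not uniform in $\varepsilon$, which is fatal in a limit $\varepsilon\to0^+$ argument. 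Your fallback (``working directly with the $L^2(Q)$ term'') is the correct and necessary move, and is exactly what the paper does: bound $\int_Q u^2\geq\int_0^T\rho(t)^2\,dt$ with $\rho(t)=\int_{\Omega}u(\cdot,t)\phi_0^{\varepsilon}\,dx$, split off the free part $e^{-\lambda_0^{\varepsilon}t}$, and estimate the Duhamel convolution by Cauchy--Schwarz.

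Second, you substantially overestimate what is needed for the localization of $\phi_0^{\varepsilon}$. Your own final bound $J_{u_0^{\varepsilon}}^{\varepsilon}(f)\gtrsim 1/\norm{\phi_0^{\varepsilon}}{L^2(\omega)}^2$ already yields the contradiction from the purely \emph{qualitative} statement $\norm{\phi_0^{\varepsilon}}{L^2(\omega)}\to0$ (and in the paper's sharper bookkeeping, the surviving factor is $|\lambda_0^{\varepsilon}|/\norm{\phi_0^{\varepsilon}}{L^2(\omega)}^2$, so even boundedness of the $L^2(\omega)$-mass would do); no exponential rate in $|\lambda_0^{\varepsilon}|$, no Agmon estimate, and no comparison with the oscillatory profiles $\delta^{1/2\pm i\sqrt{\mu-1/4}}$ is required --- the latter would in any case be delicate, since maximum-principle arguments with oscillating barriers do not go through. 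The paper obtains the localization by an elementary cutoff argument: multiply the eigenvalue equation by $\xi_{\beta}\phi_0^{\varepsilon}$ with $\xi_{\beta}$ supported in $\{\delta>\beta/2\}$, integrate by parts, observe that the potential is bounded by $4\mu/\beta^2$ on $\mathrm{supp}\,\xi_{\beta}$, and divide by $|\lambda_0^{\varepsilon}|\to\infty$ to get $\norm{\phi_0^{\varepsilon}}{H^1(\Omega\setminus\overline{\Omega}_{\beta})}\to0$. Replacing your last paragraph by this soft argument closes the proof along the lines you already set up.
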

We are going to prove Theorem \ref{stablisation thm} in two steps: firstly, we give some basic estimates on the spectrum of the operator
\begin{align}\label{operator L}
	\mathcal{L}^{\varepsilon}:= -\Delta - \frac{\mu}{\delta^2+\varepsilon^2}\,\mathcal{I}
\end{align}
on $\Omega$ with Dirichlet boundary conditions; secondly, we will apply these estimates for proving the main result of this section, Theorem \ref{stablisation thm}.

\subsection{Spectral estimates}
Since the function $1/(\delta^2+\varepsilon^2)$ is smooth and bounded in $\Omega$ for any $\varepsilon>0$, the spectrum of $\mathcal{L}^{\varepsilon}$ is given by a sequence of real eigenvalues $\lambda_0^{\varepsilon}\leq\lambda_1^{\varepsilon}\leq\ldots\leq\lambda_k^{\varepsilon}\leq\ldots\;$, with $\lambda_k^{\varepsilon}\to +\infty$ as $k\to+\infty$, to which corresponds a family of eigenfunctions $\phi_k^{\varepsilon}$ that forms an orthonormal basis of $L^2(\Omega)$. 
\begin{proposition}
Assume $\mu>1/4$ and let $\Omega_{\beta}$ be as in \eqref{notation set}. Then we have
\begin{align}\label{eigenv behaviour}
	\lim_{\varepsilon\to 0^+}\lambda_0^{\varepsilon}=-\infty
\end{align}
and, for all $\beta>0$,
\begin{align}\label{eigenf behaviour}
	\lim_{\varepsilon\to 0^+}\norm{\phi_0^{\varepsilon}}{H^1(\Omega\setminus\overline{\Omega}_{\beta})}=0.
\end{align}
\end{proposition}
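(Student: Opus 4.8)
The plan is to read both statements off the variational characterisation of the bottom eigenvalue,
\[
	\lambda_0^{\varepsilon}=\inf_{u\in H_0^1(\Omega)\setminus\{0\}}\frac{Q_{\varepsilon}[u]}{\int_{\Omega}u^2\,dx},\qquad Q_{\varepsilon}[u]:=\int_{\Omega}|\nabla u|^2\,dx-\mu\int_{\Omega}\frac{u^2}{\delta^2+\varepsilon^2}\,dx .
\]
For \eqref{eigenv behaviour} I would feed in test functions concentrating in the shrinking collar $\{\delta\sim\varepsilon\}$ along which the quotient diverges to $-\infty$; for \eqref{eigenf behaviour} I would then exploit the fact that, since $\lambda_0^{\varepsilon}\to-\infty$, a normalised ground state is forced to put asymptotically all of its $L^2$--mass, and hence also all of its Dirichlet energy, into that same collar.

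\emph{Proof of \eqref{eigenv behaviour}.} Since $\Omega$ is $C^2$ there is some $\rho_0>0$ with $|\nabla\delta|\equiv1$ on $\Omega_{\rho_0}$, and the coarea formula gives $\int_{\Omega}F\,dx=\int_0^{\rho_0}\big(\int_{\{\delta=t\}}F\,d\mathcal H^{N-1}\big)\,dt$ for $F$ supported in $\Omega_{\rho_0}$, with $g(t):=\mathcal H^{N-1}(\{\delta=t\})\to\mathcal H^{N-1}(\Gamma)=:\kappa$ as $t\to0^+$. Fixing a Lipschitz profile $W$ with compact support in $(0,\infty)$ and setting $u_{\varepsilon}(x):=W(\delta(x)/\varepsilon)$ — which for small $\varepsilon$ is supported in a thin shell compactly contained in $\Omega$, hence lies in $H_0^1(\Omega)$, and satisfies $|\nabla u_{\varepsilon}|^2=\varepsilon^{-2}W'(\delta/\varepsilon)^2$ because $|\nabla\delta|=1$ — the substitution $t=\varepsilon y$ together with dominated convergence yields, as $\varepsilon\to0^+$,
\begin{align*}
	\int_{\Omega}u_{\varepsilon}^{2}\,dx &=\varepsilon\big(\kappa+o(1)\big)\int_{0}^{\infty}W^{2}\,dy,\\
	\int_{\Omega}|\nabla u_{\varepsilon}|^{2}\,dx &=\varepsilon^{-1}\big(\kappa+o(1)\big)\int_{0}^{\infty}(W')^{2}\,dy,\\
	\int_{\Omega}\frac{u_{\varepsilon}^{2}}{\delta^{2}+\varepsilon^{2}}\,dx &=\varepsilon^{-1}\big(\kappa+o(1)\big)\int_{0}^{\infty}\frac{W^{2}}{1+y^{2}}\,dy .
\end{align*}
Hence $\lambda_0^{\varepsilon}\le Q_{\varepsilon}[u_{\varepsilon}]/\int_{\Omega}u_{\varepsilon}^2\,dx=\varepsilon^{-2}\big(\int_0^{\infty}(W')^2\,dy-\mu\int_0^{\infty}W^2/(1+y^2)\,dy+o(1)\big)/\int_0^{\infty}W^2\,dy$, so it suffices to find $W$ with $\int_0^{\infty}(W')^2<\mu\int_0^{\infty}W^2/(1+y^2)$ — and this is exactly where $\mu>1/4$ enters. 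For $R$ large I would take $W=y^{1/2}$ on $(2R,R^2/2)$, extended affinely down to $0$ on $(R,2R)$ and on $(R^2/2,R^2)$ (and smoothed): then $\int(W')^2=\tfrac14\ln R+O(1)$, while $\tfrac1{1+y^2}\ge\tfrac{1-\eta_0}{y^2}$ for $y\ge y_0(\eta_0)$ gives $\int\frac{W^2}{1+y^2}\ge(1-\eta_0)\ln R+O(1)$; choosing $\eta_0$ with $\mu(1-\eta_0)>\tfrac14$ and then $R$ large makes the difference strictly negative. (Equivalently, one may invoke the classical fact that the half-line Schr\"odinger operator $-\partial_y^2-\mu/(1+y^2)$ with a Dirichlet condition at $0$ has strictly negative — in fact infinitely many negative — eigenvalues precisely when $\mu>1/4$, because its zero-energy solution then oscillates at infinity.) This gives $\lambda_0^{\varepsilon}\le-c\,\varepsilon^{-2}\to-\infty$.

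\emph{Proof of \eqref{eigenf behaviour}.} Normalise $\norm{\phi_0^{\varepsilon}}{L^2(\Omega)}=1$, fix $\beta>0$, and pick smooth cut-offs $\eta_1,\eta_2\in C_c^{\infty}(\Omega)$, $0\le\eta_j\le1$, with $\eta_1\equiv1$ on $\Omega\setminus\Omega_{\beta}$, $\operatorname{supp}\eta_1\subset\{\delta>2\beta/3\}$, and $\eta_2\equiv1$ on $\{\delta\ge\beta/2\}$, $\operatorname{supp}\eta_2\subset\{\delta>\beta/3\}$. Testing the eigenvalue equation against $\eta_j^2\phi_0^{\varepsilon}$ (legitimate, as $\eta_j^2\phi_0^{\varepsilon}\in H_0^1(\Omega)$ is supported off $\Gamma$) and integrating by parts gives
\[
	\int_{\Omega}\eta_j^2|\nabla\phi_0^{\varepsilon}|^2\,dx
	=\lambda_0^{\varepsilon}\int_{\Omega}\eta_j^2(\phi_0^{\varepsilon})^2\,dx
	+\frac12\int_{\Omega}\Delta(\eta_j^2)(\phi_0^{\varepsilon})^2\,dx
	+\mu\int_{\Omega}\frac{\eta_j^2(\phi_0^{\varepsilon})^2}{\delta^2+\varepsilon^2}\,dx .
\]
On $\operatorname{supp}\eta_j$ one has $\delta\ge\beta/3$, so the potential factor is $\le9\beta^{-2}$; together with $|\Delta(\eta_j^2)|\le C_\beta$, $\norm{\phi_0^{\varepsilon}}{L^2(\Omega)}=1$ and $\int_{\Omega}\eta_j^2|\nabla\phi_0^{\varepsilon}|^2\ge0$, the case $j=2$ yields $\lambda_0^{\varepsilon}\int_{\Omega}\eta_2^2(\phi_0^{\varepsilon})^2\ge-C_\beta'$, whence — $\lambda_0^{\varepsilon}<0$ for small $\varepsilon$ by \eqref{eigenv behaviour} — $\int_{\Omega}\eta_2^2(\phi_0^{\varepsilon})^2\le C_\beta'/|\lambda_0^{\varepsilon}|\to0$; in particular $\norm{\phi_0^{\varepsilon}}{L^2(\Omega\setminus\Omega_{\beta})}\to0$. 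Inserting this into the case $j=1$, whose last two integrands are supported where $\eta_2\equiv1$ and hence are $O\big(\int_{\Omega}\eta_2^2(\phi_0^{\varepsilon})^2\big)\to0$, and using $\lambda_0^{\varepsilon}\int_{\Omega}\eta_1^2(\phi_0^{\varepsilon})^2\le0$, we obtain $\int_{\Omega}\eta_1^2|\nabla\phi_0^{\varepsilon}|^2\to0$, i.e. $\norm{\nabla\phi_0^{\varepsilon}}{L^2(\Omega\setminus\Omega_{\beta})}\to0$. Combining the two limits proves \eqref{eigenf behaviour}.

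\emph{Main obstacle.} The only genuinely non-routine ingredient is the one-dimensional step in the proof of \eqref{eigenv behaviour}: the construction (equivalently, the classical oscillation criterion) showing that $\int_0^{\infty}(W')^2-\mu\int_0^{\infty}W^2/(1+y^2)$ can be made negative, which holds exactly for $\mu>1/4$ and is the place where the super-criticality hypothesis is consumed. The coarea scaling in the first part and the weighted energy (IMS-type localisation) estimate in the second are standard once that ingredient is available.
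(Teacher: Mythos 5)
Your argument is correct, and its two halves compare differently with the paper. For \eqref{eigenf behaviour} you do essentially what the paper does: test the eigenvalue equation against a cut-off times $\phi_0^{\varepsilon}$, use the positivity of the localised Dirichlet energy together with $\lambda_0^{\varepsilon}\to-\infty$ to kill the $L^2$ mass away from the boundary, then re-use the same identity at a slightly larger scale to control the gradient; the paper's single function $\xi_{\beta}$ used at scales $\beta$ and $\beta/2$ plays exactly the role of your $\eta_1,\eta_2$. For \eqref{eigenv behaviour}, however, your route is genuinely different. The paper argues by contradiction: a uniform lower bound $M$ on $\lambda_0^{\varepsilon}$ would give $\mu\int_{\Omega}u^2/\delta^2\,dx\le\int_{\Omega}|\nabla u|^2\,dx-M\int_{\Omega}u^2\,dx$ in the limit, and a dilation $u_a(x)=a^Nu(ax)$ is then invoked to remove the $L^2$ term and contradict the optimality of the constant $1/4$ in the Hardy inequality, which is used as a black box. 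You instead build explicit quasi-modes $W(\delta/\varepsilon)$ concentrated in the collar $\{\delta\sim\varepsilon\}$, reduce to the half-line via the coarea formula, and exhibit the failure of the $1/4$ threshold by hand with the logarithmic profile $y^{1/2}$. Your version is self-contained (it does not need the sharpness of the Hardy constant as external input), yields the quantitative rate $\lambda_0^{\varepsilon}\le-c\,\varepsilon^{-2}$ rather than mere divergence, and sidesteps the delicate point in the paper's scaling step, namely that $\delta(ax)$ is not $a^{-1}\delta(x)$ for a curved boundary (that dilation is borrowed from the point-singularity setting $|x|^{-2}$ and would need a boundary-flattening justification here). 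The price is the explicit one-dimensional construction, which you carry out correctly: the affine caps contribute $O(1)$, the middle piece gives $\tfrac14\ln R$ against $\mu(1-\eta_0)\ln R$, and $\mu>1/4$ is precisely what makes the difference negative for $R$ large.
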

\begin{proof}
We argue by contradiction and we assume that $\lambda_0^{\varepsilon}$ is bounded from below by some constant $M$. From the Rayleigh formula we have
\begin{align*}
	\mu\int_{\Omega}\frac{u^2}{\delta^2+\varepsilon^2}\,dx \leq\int_{\Omega}|\nabla u|^2\,dx - M\int_{\Omega}u^2\,dx,
\end{align*}
for all $\varepsilon>0$ and any $u\in H_0^1(\Omega)$. Taking now $u\in\mathcal{D}(\Omega)$, we pass to the limit as $\varepsilon\to 0^+$ in the inequality above and we get
\begin{align}\label{stabilisation est}
	\mu\int_{\Omega}\frac{u^2}{\delta^2}\,dx \leq\int_{\Omega}|\nabla u|^2\,dx - M\int_{\Omega}u^2\,dx,
\end{align}
that holds for any $u\in H_0^1(\Omega)$ by a density argument.
\\
\indent Now, given $\beta_0>0$, let us choose $u\in H_0^1(\Omega_{\beta_0})$, that we extend by zero on $\RR^N$, and let us define, for $a\geq 1$,
\begin{align*}
	u_a(x):=a^Nu(ax).
\end{align*}
\indent This function is clearly in $H_0^1(\Omega_{\beta_0})$, and consequently in $H_0^1(\Omega)$; therefore, we can apply \eqref{stabilisation est} to it and find
\begin{align*}
	a^2\left(\mu\int_{\Omega}\frac{u^2}{\delta^2}\,dx - \int_{\Omega}|\nabla u|^2\,dx \right)\leq - M\int_{\Omega}u^2\,dx.
\end{align*}
Passing to the limit as $a\to +\infty$, we obtain 
\begin{align*}
	\mu\int_{\Omega}\frac{u^2}{\delta^2}\,dx \leq\int_{\Omega}|\nabla u|^2\,dx,
\end{align*}
for any $u\in H_0^1(\Omega_{\beta_0})$. Therefore, we should have $\mu\leq 1/4$, since this is the Hardy inequality in the set $\Omega_{\beta_0}$; then, we have a contradiction.
\\
Now, consider the first eigenfunction $\phi_0^{\varepsilon}\in H_0^1(\Omega)$ of $\mathcal{L}^{\varepsilon}$, that by definition satisfies 
\begin{align}\label{eigen def}
	-\Delta\phi_0^{\varepsilon}-\mu\frac{\phi_0^{\varepsilon}}{\delta^2+\varepsilon^2}=\lambda_0^{\varepsilon}\phi_0^{\varepsilon},
\end{align}
in $\Omega$. Observe that, since the potential is smooth in $\Omega$, also the function $\phi_0^{\varepsilon}$ is smooth by classical elliptic regularity. 
\\
\indent Set $\beta>0$ and let $\xi_{\beta}$ be a non-negative smooth function, vanishing in $\Omega_{\beta/2}$ and equals to $1$ in $\RR^N\setminus\Omega_{\beta}$, with $\norm{\xi_{\beta}}{\infty}\leq 1$. Multiplying \ref{eigen def} by $\xi_{\beta}\phi_0^{\varepsilon}$ and integrating by parts we obtain
\begin{align}\label{cut_off eigen}
	\int_{\Omega} \xi_{\beta}\left|\nabla \phi_0^{\varepsilon}\right|^2\,dx + \left|\lambda_0^{\varepsilon}\right|\int_{\Omega} \xi_{\beta}\left(\phi_0^{\varepsilon}\right)^2\,dx = \mu\int_{\Omega} \xi_{\beta}\frac{\left(\phi_0^{\varepsilon}\right)^2}{\delta^2+\varepsilon^2}\,dx + \frac{1}{2}\int_{\Omega} \Delta\xi_{\beta}\left(\phi_0^{\varepsilon}\right)^2\,dx.
\end{align}
Therefore, since $\phi_0^{\varepsilon}$ is of unit $L^2$-norm, and due to the definition of $\xi_{\beta}$, we get
\begin{align*}
	\left|\lambda_0^{\varepsilon}\right|\intr{\Omega\setminus\Omega_{\beta}} \left(\phi_0^{\varepsilon}\right)^2\,dx \leq \frac{4\mu}{\beta^2} + \frac{1}{2}\,\norm{\Delta\xi_{\beta}}{L^{\infty}(\Omega)}.
\end{align*}
Since $\left|\lambda_0^{\varepsilon}\right|\to\infty$ as $\varepsilon\to 0^+$, we obtain that for any $\beta>0$
\begin{align}\label{eigenf int}
	\lim_{\varepsilon\to 0^+}\intr{\Omega\setminus\Omega_{\beta}} \left(\phi_0^{\varepsilon}\right)^2\,dx = 0. 
\end{align}
Furthermore, using again \eqref{cut_off eigen} and the definition of $\xi_{\beta}$
\begin{align*}
	\intr{\Omega\setminus\Omega_{\beta}} \left|\nabla \phi_0^{\varepsilon}\right|^2\,dx \leq \left(\frac{4\mu}{\beta^2} + \frac{1}{2}\,\norm{\Delta\xi_{\beta}}{L^{\infty}(\Omega)}\right) \intr{\Omega\setminus\Omega_{\beta/2}} \left(\phi_0^{\varepsilon}\right)^2\,dx.
\end{align*}
Hence, the proof of \eqref{eigenf behaviour} is completed by using \eqref{eigenf int} for $\beta/2$.
\end{proof}

\begin{proof}[Proof of Theorem \ref{stablisation thm}]
Fix $\varepsilon>0$ and choose $u_0^{\varepsilon}=\phi_0^{\varepsilon}$, that by definition is of unit $L^2$-norm. We want to show that 
\begin{align*}
	\inf_{f\in L^2((0,T);L^2(\Omega))} J_{u_0^{\varepsilon}}^{\varepsilon}(f) \to +\infty
\end{align*}
as $\varepsilon\to 0^+$.
\\
\indent Hence, let $f\in L^2((0,T);L^2(\Omega))$ and consider the corresponding solution $u$ of \eqref{heat hardy nh} with initial data $u_0^{\varepsilon}=\phi_0^{\varepsilon}$. Set
\begin{align*}
	\rho(t) = \int_{\Omega} u(x,t)\phi_0^{\varepsilon}(x)\,dx, \;\;\textrm{ and }\;\; \zeta(t)=\langle f(t),\phi_0^{\varepsilon}\rangle_{L^2(\Omega)};
\end{align*}
then, $\rho(t)$ satisfies the first order differential equation
\begin{align*}
	\left\{\begin{array}{l}
		\rho'(t)+\lambda_0^{\varepsilon}\,\rho(t) = \zeta(t),
		\\ \rho(0)=1.
	\end{array}\right.
\end{align*}
By the Duhamel's formula we obtain
\begin{align*}
	\rho(t) = e^{-\lambda_0^{\varepsilon}t} + \int_0^t e^{-\lambda_0^{\varepsilon}(t-s)}\zeta(s)\,ds.
\end{align*}
Therefore,
\begin{align}\label{ode est}
	\int_Q u^2\,dxdt \geq \int_0^T \rho(t)^2\,dt \geq \frac{1}{2}\int_0^T e^{-\lambda_0^{\varepsilon}t}\,dt - \int_0^T\left(\int_0^t e^{-\lambda_0^{\varepsilon}(t-s)}\zeta(s)\,ds\right)^2\,dt.
\end{align}
Of course
\begin{align*}
	\frac{1}{2}\int_0^T e^{-\lambda_0^{\varepsilon}t}\,dt = \frac{1}{4\lambda_0^{\varepsilon}}\left(e^{2\lambda_0^{\varepsilon}T}-1\right);
\end{align*}
on the other hand, by trivial computations we have 
\begin{align*}
	\int_0^T\left(\int_0^t e^{-\lambda_0^{\varepsilon}(t-s)}\zeta(s)\,ds\right)^2\,dt \leq \frac{1}{4\left(\lambda_0^{\varepsilon}\right)^2}e^{2\lambda_0^{\varepsilon}T}\int_0^T \zeta(s)^2\,ds.
\end{align*}
Besides, from the definition of $\zeta(t)$, and since $f$ is localized in $\omega$, it immediately follows
\begin{align*}
	\left|\zeta(t)\right|^2 \leq \norm{f(t)}{L^2(\Omega)}^2\norm{\phi_0^{\varepsilon}}{L^2(\omega)}^2.
\end{align*}
Hence, we deduce from \eqref{ode est} that
\begin{align*}
	\frac{1}{4\lambda_0^{\varepsilon}}\left(e^{2\lambda_0^{\varepsilon}T}-1\right) \leq \int_Q u^2\,dxdt + \frac{\norm{\phi_0^{\varepsilon}}{L^2(\omega)}^2}{4\left(\lambda_0^{\varepsilon}\right)^2}e^{2\lambda_0^{\varepsilon}T}\int_0^T \norm{f(t)}{L^2(\Omega)}^2\,dt,
\end{align*}
that implies either
\begin{align*}
	\frac{1}{8\lambda_0^{\varepsilon}}\left(e^{2\lambda_0^{\varepsilon}T}-1\right) \leq \int_Q u^2\,dxdt 
\end{align*}
or
\begin{align*}
	\frac{1}{8\lambda_0^{\varepsilon}}\left(e^{2\lambda_0^{\varepsilon}T}-1\right) \leq \frac{\norm{\phi_0^{\varepsilon}}{L^2(\omega)}^2}{4\left(\lambda_0^{\varepsilon}\right)^2}e^{2\lambda_0^{\varepsilon}T}\int_0^T \norm{f(t)}{L^2(\Omega)}^2\,dt.
\end{align*}
In any case, for any $f\in L^2((0,T);L^2(\Omega))$ with support in $\omega$ we get
\begin{align*}
	J_{u_0^{\varepsilon}}^{\varepsilon}(f) \geq \inf\left\{\frac{e^{2\lambda_0^{\varepsilon}T}-1}{16\lambda_0^{\varepsilon}},\; \frac{\lambda_0^{\varepsilon}}{4\norm{\phi_0^{\varepsilon}}{L^2(\omega)}^2}\left(1-e^{2\lambda_0^{\varepsilon}T}\right)\right\}.
\end{align*}
This last bound blows up as $\varepsilon\to 0^+$, due to the estimates \eqref{eigenv behaviour} and \eqref{eigenf behaviour}. Indeed, by definition of $\omega$, we can find $\beta>0$ such that $\omega\subset\Omega\setminus\Omega_{\beta}$ and therefore 
\begin{align*}
	\norm{\phi_0^{\varepsilon}}{L^2(\omega)} \leq \norm{\phi_0^{\varepsilon}}{L^2(\Omega\setminus\Omega_{\beta})} \leq \norm{\phi_0^{\varepsilon}}{H^1(\Omega\setminus\Omega_{\beta})} \to 0,
\end{align*}
as $\varepsilon\to 0^+$. This concludes the proof.
\end{proof}
\section{Proof of the Carleman estimate}\label{carleman_proof}
Before giving the proof of the Carleman estimate \eqref{carleman}, it is important to remark that, in principle, the solutions of \eqref{heat hardy adj} do not have enough regularity to justify the computations; in particular, the $H^2$ regularity in the space variable that would be required for applying standard integration by parts may not be guaranteed. For this reason, we need to add some regularisation argument. 
\\
\indent In our case, this can be done by regularising the potential, i.e. by considering, instead of the operator $\mathcal{A}$ defined in \eqref{singular op}, the following 
\begin{align}\label{operator A reg}
	\mathcal{A}_n  v:=\Delta v+\frac{\mu_1}{(\delta+1/n)^2}\,v,\;\;\;\;n>0.
\end{align}
$\indent$ The domain of this new operator is $\mathcal{D}(\mathcal{A}_n)=\mathcal{D}(-\Delta)=H_0^1(\Omega)\cap H^2(\Omega)$, due to the fact that now our potential is bounded on $\Omega$, and the solution $v_n$ of the corresponding parabolic equation possess all the regularity needed to justify the computations. Passing to the limit as $n\to +\infty$, we can then recover our result for the solution $v$ of \eqref{heat hardy adj}.
\\
\indent In order to simplify our presentation, we will skip this regularisation process and we will write directly the formal computations for the solution of \eqref{heat hardy adj}. Moreover, we are going to present here the main ideas of the proof of the inequality, using some some technical Lemmas, which will be proved in appendix A.
\subsubsection*{Step 1. Notation and rewriting of the problem}
For any solution $v$ of the adjoint problem \eqref{heat hardy adj}, and for any $R>0$, we define
\begin{align}\label{zeta}
	z(x,t):=v(x,t)e^{-R\sigma(x,t)},
\end{align}
which satisfies 
\begin{align}\label{bc time}
	z(x,0)=z(x,T)=0
\end{align}
in $H_0^1(\Omega)$, due to the definition of $\sigma$. The positive parameter $R$ is meant to be large. Plugging $v(x,t)=z(x,t)e^{R\sigma(x,t)}$ in \eqref{heat hardy adj}, we obtain that $z$ satisfies 
\begin{align}\label{heat z}
	\begin{array}{ll}
		\displaystyle z_t+\Delta z+\frac{\mu}{\delta^2}z+2R\nabla z\cdot\nabla\sigma+Rz\Delta\sigma+z\left(R\sigma_t+R^2|\nabla\sigma|^2\right)=0, & (x,t)\in\Omega\times (0,T)
	\end{array}
\end{align}
with boundary conditions
\begin{align}\label{heat z bc}
	\begin{array}{ll}
		z(x,t)=0, & (x,t)\in\Gamma\times(0,T). 
	\end{array}
\end{align}
Next, we define a smooth positive function $\alpha(x)$ such that
\begin{align}\label{alpha}
	\alpha(x)=\left\{\begin{array}{ll}
		0 & x\in\Omega_{r_0/2}
		\\ 1 & x\in\Omega\setminus\Omega_{r_0}
	\end{array}\right.
\end{align} 
where $\Omega_{r_0}$ has been introduced in \eqref{notation set}. Setting
\begin{align*}
	\mathrm{S}z := \Delta z+\frac{\mu}{\delta^2}z+z\left(R\sigma_t+R^2|\nabla\sigma|^2\right), \;\; \mathrm{A}z := z_t+2R\nabla z\cdot\nabla\sigma+Rz\Delta\sigma(1+\alpha), \;\; \mathrm{P}z := -R\alpha z\Delta\sigma,
\end{align*}	
one easily deduce from \eqref{heat z} that
\begin{align*}
	\mathrm{S}z+\mathrm{A}z+\mathrm{P}z=0, & \;\;\;\;\norm{\mathrm{S}z}{L^2(Q)}^2+\norm{\mathrm{A}z}{L^2(Q)}^2+2\langle\mathrm{S},\mathrm{A}\rangle_{L^2(Q)} =\norm{\mathrm{P}z}{L^2(Q)}^2.
\end{align*}
In particular, we obtain that the quantity
\begin{align}\label{I}
	I=\langle\mathrm{S}z,\mathrm{A}z\rangle_{L^2(Q)}-\frac{1}{2}\norm{R\alpha z\Delta\sigma}{L^2(Q)}^2
\end{align}
is not positive.

\subsubsection*{Step 2. Computation of the scalar product}
\begin{lemma}\label{scalar product lemma}
The following identity holds:
\begin{align}\label{scalar product}
	\nonumber I &= R\int_Q |\partial_nz|^2\partial_n\sigma\,dxdt - 2R\int_Q D^2\sigma(\nabla z,\nabla z)\,dxdt - R\int_Q \alpha\Delta\sigma|\nabla z|^2\,dxdt 
	\\
	\nonumber &+ R\int_Q \left(\nabla(\Delta\sigma)\cdot\nabla\alpha\right)z^2\,dxdt + \frac{R}{2}\int_Q \Delta\sigma\Delta\alpha\, z^2\,dxdt + R\mu\int_Q \alpha\Delta\sigma\frac{z^2}{\delta^2}\,dxdt 
	\\
	\nonumber &+ 2R\mu\int_Q \left(\nabla\delta\cdot\nabla\sigma\right)\frac{z^2}{\delta^3}\,dxdt + \frac{R}{2}\int_Q \Delta^2\sigma(1+\alpha)z^2\,dxdt - 2R^3\int_Q D^2\sigma(\nabla\sigma,\nabla\sigma)z^2\,dxdt 
	\\
	\nonumber &+ R^3\int_Q \alpha\Delta\sigma|\nabla\sigma|^2z^2\,dxdt - \frac{R^2}{2}\int_Q \alpha^2|\Delta\sigma|^2z^2\,dxdt -\frac{1}{2}\int_Q \left(R\sigma_{tt}+2R^2(|\nabla\sigma|^2)_t\right)z^2\,dxdt 
	\\
	&+ R^2\int_Q \alpha\sigma_t\Delta\sigma\, z^2\,dxdt.
\end{align}
\end{lemma}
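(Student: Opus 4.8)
The plan is to expand the scalar product $I=\langle \mathrm{S}z,\mathrm{A}z\rangle_{L^2(Q)}-\tfrac12\norm{R\alpha z\Delta\sigma}{L^2(Q)}^2$ term by term, using the definitions of $\mathrm{S}z$ and $\mathrm{A}z$ given before the statement, and reduce every summand to one of the boundary/volume integrals appearing on the right-hand side of \eqref{scalar product} via integration by parts in space and in time. Concretely, I would write
\begin{align*}
	\mathrm{S}z=\Delta z+\frac{\mu}{\delta^2}z+\beta z,\qquad \mathrm{A}z=z_t+2R\nabla z\cdot\nabla\sigma+R\Delta\sigma(1+\alpha)z,
\end{align*}
where $\beta:=R\sigma_t+R^2|\nabla\sigma|^2$, so that $\langle\mathrm{S}z,\mathrm{A}z\rangle$ splits into nine scalar products $\langle \Delta z,\cdot\rangle$, $\langle \tfrac{\mu}{\delta^2}z,\cdot\rangle$, $\langle\beta z,\cdot\rangle$ against the three pieces of $\mathrm{A}z$. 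Each of these is handled by a standard but careful integration by parts; the boundary terms on $\Gamma\times(0,T)$ all vanish except the one producing $R\int_Q|\partial_n z|^2\partial_n\sigma$ (because $z=0$ on the lateral boundary, so on $\Gamma$ one has $\nabla z=(\partial_n z)\,n$), and the terms at $t=0,T$ vanish thanks to \eqref{bc time} and the blow-up of $\theta$. The term $\langle \Delta z, 2R\nabla z\cdot\nabla\sigma\rangle$ is the one that, after integrating by parts twice, yields the Hessian term $-2R\int_Q D^2\sigma(\nabla z,\nabla z)$ together with $R\int_Q|\partial_n z|^2\partial_n\sigma$; the term $\langle \Delta z, R\Delta\sigma(1+\alpha)z\rangle$ yields, after moving the Laplacian onto $\Delta\sigma(1+\alpha)z$, the contributions involving $\Delta^2\sigma$, $\nabla(\Delta\sigma)\cdot\nabla\alpha$, $\Delta\sigma\Delta\alpha$ and $-R\int_Q\alpha\Delta\sigma|\nabla z|^2$ (the non-$\alpha$ part of $|\nabla z|^2$ combines and cancels elsewhere).

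For the singular piece, $\langle \tfrac{\mu}{\delta^2}z, z_t\rangle=\tfrac{\mu}{2}\int_Q\tfrac{1}{\delta^2}(z^2)_t=0$ after integrating by parts in $t$ (again using \eqref{bc time}), while $\langle \tfrac{\mu}{\delta^2}z, 2R\nabla z\cdot\nabla\sigma\rangle=R\mu\int_Q \tfrac{1}{\delta^2}\nabla\sigma\cdot\nabla(z^2)$ produces, upon integrating by parts in $x$, exactly $2R\mu\int_Q(\nabla\delta\cdot\nabla\sigma)\tfrac{z^2}{\delta^3}$ plus $-R\mu\int_Q\tfrac{\Delta\sigma}{\delta^2}z^2$ (the latter being killed once combined with the $\alpha$-weighted term, since $1-\alpha$ is supported near the boundary where the computation must be done more carefully — this is where the choice of $\alpha$ vanishing in $\Omega_{r_0/2}$ enters). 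The term $\langle \tfrac{\mu}{\delta^2}z, R\Delta\sigma(1+\alpha)z\rangle$ gives $R\mu\int_Q(1+\alpha)\Delta\sigma\tfrac{z^2}{\delta^2}$, which combines with the previous one to leave only $R\mu\int_Q\alpha\Delta\sigma\tfrac{z^2}{\delta^2}$. The $\beta$-terms are purely algebraic after integration by parts: $\langle\beta z,z_t\rangle=-\tfrac12\int_Q\beta_t z^2$ gives the $-\tfrac12\int_Q(R\sigma_{tt}+2R^2(|\nabla\sigma|^2)_t)z^2$ contribution; $\langle\beta z,2R\nabla z\cdot\nabla\sigma\rangle=-R\int_Q\nabla\cdot(\beta\nabla\sigma)z^2$ produces, after inserting $\beta=R\sigma_t+R^2|\nabla\sigma|^2$ and expanding $\nabla\cdot(|\nabla\sigma|^2\nabla\sigma)=2D^2\sigma(\nabla\sigma,\nabla\sigma)+|\nabla\sigma|^2\Delta\sigma$, the cubic-in-$R$ terms $-2R^3\int_Q D^2\sigma(\nabla\sigma,\nabla\sigma)z^2$ and (partially) the $R^3\int_Q\alpha\Delta\sigma|\nabla\sigma|^2 z^2$ together with an $R^2\int_Q\alpha\sigma_t\Delta\sigma z^2$ piece; and $\langle\beta z,R\Delta\sigma(1+\alpha)z\rangle$ gives the remaining terms of the same type. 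Finally subtracting $\tfrac12\norm{R\alpha z\Delta\sigma}{L^2(Q)}^2=\tfrac{R^2}{2}\int_Q\alpha^2|\Delta\sigma|^2z^2$ accounts for that explicit summand.

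The main obstacle is bookkeeping rather than conceptual: one must track which copies of $|\nabla z|^2$, $z^2/\delta^2$ and $\Delta\sigma\,z^2$ cancel between the different scalar products so that only the $\alpha$-weighted versions survive (the decomposition of $\mathrm{A}z$ with the factor $(1+\alpha)$ in front of $R\Delta\sigma$ is engineered precisely so that the "bad" non-$\alpha$ pieces cancel), and one must justify each spatial integration by parts near $\Gamma$ despite the $1/\delta^2$, $1/\delta^3$ singularities — this is legitimate because the singular volume terms that remain are exactly those that will later be controlled by the Hardy–Poincaré inequalities of Propositions \ref{hardy 1 prop}–\ref{hardy 3 prop}, while the genuinely delicate manipulations (those involving $\Delta\delta$, which is only an $L^\infty$ function since $\Omega$ is merely $C^2$) occur only where multiplied by $\alpha$, hence away from $\Gamma$. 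Since we are formally computing with the regularised solution $v_n$ and regularised potential of \eqref{operator A reg} as explained above, all these integrations by parts are rigorous at the level $n<\infty$, and the identity \eqref{scalar product} is obtained by collecting terms; I would present the computation grouped by the three factors of $\mathrm{A}z$, deferring the purely mechanical expansions to the appendix.
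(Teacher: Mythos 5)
Your proposal is correct and follows essentially the same route as the paper: the paper likewise splits $\langle\mathrm{S}z,\mathrm{A}z\rangle$ into the nine products $I_{i,j}$, refers the non-singular ones ($I_{1,j}$ and $I_{3,j}$) to the analogous computation in \cite{ervedoza2008control}, and computes only the $I_{2,j}$ terms, obtaining exactly the cancellation $-\Delta\sigma+(1+\alpha)\Delta\sigma=\alpha\Delta\sigma$ in front of $z^2/\delta^2$ that you describe. No gap.
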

The proof of Lemma \ref{scalar product lemma} will be presented in the appendix. Moreover, in what follows we will split \eqref{scalar product} in four parts; first of all, let us define the boundary term
\begin{align}\label{I_bd}
	& I_{bd} = R\int_{\Sigma} |\partial_nz|^2\partial_n\sigma\,dsdt,
\end{align}
where $\Sigma:=\partial\Omega\times(0,T).$ 
\\
\indent Secondly, we define $I_l$ as the sum of the integrals linear in $\sigma$ which do not involve any time derivative
\begin{align}\label{I_l}
	\nonumber I_{l} &= -2R\int_Q D^2\sigma(\nabla z,\nabla z)\,dxdt - R\int_Q \alpha\Delta\sigma|\nabla z|^2\,dxdt + R\int_Q \left(\nabla(\Delta\sigma)\cdot\nabla\alpha\right)z^2\,dxdt 
	\\ \nonumber
	&+ \frac{R}{2}\int_Q \Delta\sigma\Delta\alpha\, z^2\,dxdt + R\mu\int_Q \alpha\Delta\sigma\frac{z^2}{\delta^2}\,dxdt 
	\\ 
	&+ 2R\mu\int_Q \left(\nabla\delta\cdot\nabla\sigma\right)\frac{z^2}{\delta^3}\,dxdt + \frac{R}{2}\int_Q \Delta^2\sigma(1+\alpha)z^2\,dxdt.
\end{align}
\indent Then, we consider the sum of the integrals involving non-linear terms in $\sigma$ and without any time derivative, that is
\begin{align}\label{I_nl}
	& I_{nl} = -2R^3\int_Q D^2\sigma(\nabla\sigma,\nabla\sigma)z^2\,dxdt + R^3\int_Q \alpha\Delta\sigma|\nabla\sigma|^2z^2\,dxdt - \frac{R^2}{2}\int_Q \alpha^2|\Delta\sigma|^2z^2\,dxdt.
\end{align}
Finally, we define the terms involving the time derivative in $\sigma$ as
\begin{align}\label{I_t}
	& I_t = -\frac{1}{2}\int_Q \left(R\sigma_{tt}+2R^2(|\nabla\sigma|^2)_t\right)z^2\,dxdt +   R^2\int_Q \alpha\sigma_t\Delta\sigma\, z^2\,dxdt.
\end{align}

\subsubsection*{Step 3. Bounds for the quantities $I_b$, $I_l$, $I_{nl}$ and $I_t$}
We now estimates the four quantities \eqref{I_bd}, \eqref{I_l}, \eqref{I_nl} and \eqref{I_t} separately.  
\begin{lemma}\label{I_bd lemma}
It holds that $I_{bd}=0$ for any $\lambda>1$ 
\end{lemma}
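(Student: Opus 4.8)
The plan is to analyze the boundary integral $I_{bd} = R\int_{\Sigma} |\partial_n z|^2 \partial_n \sigma \, ds\, dt$ and show that the factor $\partial_n \sigma$ vanishes identically on $\Gamma$. First I would recall that $z = v e^{-R\sigma}$ vanishes on $\Sigma$, so on the boundary $\nabla z$ is purely normal and the integrand $|\partial_n z|^2 \partial_n \sigma$ is exactly what appears; there is no further simplification of $z$ available, and the whole point reduces to computing $\partial_n \sigma$ on $\Gamma$. From the definition \eqref{sigma}, $\sigma(x,t) = \theta(t)\left(C_\lambda - \delta^2 \psi - (\delta/r_0)^\lambda \phi\right)$ with $\phi = e^{\lambda\psi}$, so $\nabla\sigma = \theta\left(-\nabla(\delta^2\psi) - \nabla\left((\delta/r_0)^\lambda \phi\right)\right)$.

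Next I would differentiate each term. For the first term, $\nabla(\delta^2 \psi) = 2\delta \psi \nabla\delta + \delta^2 \nabla\psi$, which vanishes on $\Gamma$ because $\delta = 0$ there (recall $\delta$ is Lipschitz and, near $\Gamma$, $C^2$ since $\Omega$ is a $C^2$ domain, with $|\nabla\delta| = 1$ in $\Omega_{r_0}$, so $\delta\nabla\delta \to 0$ and $\delta^2\nabla\psi \to 0$ as $x \to \Gamma$). For the second term, since $\lambda > 1$, $\nabla\left((\delta/r_0)^\lambda \phi\right) = \frac{\lambda}{r_0}\left(\frac{\delta}{r_0}\right)^{\lambda-1}\phi\,\nabla\delta + \left(\frac{\delta}{r_0}\right)^\lambda \nabla\phi$; the first piece carries a factor $\delta^{\lambda-1}$ with $\lambda - 1 > 0$ and hence vanishes on $\Gamma$, and the second piece carries $\delta^\lambda$ and also vanishes. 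Therefore $\nabla\sigma = 0$ on $\Gamma$ for every $t \in (0,T)$, so in particular $\partial_n\sigma = \nabla\sigma \cdot n = 0$ on $\Gamma$, whence $I_{bd} = 0$.

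The only mildly delicate point — and the one I expect to be the main obstacle to state cleanly rather than to prove — is justifying that $\delta$ is regular enough near $\Gamma$ for these gradient computations to make sense pointwise on $\Gamma$; this is where the $C^2$ hypothesis on $\Omega$ enters, guaranteeing that $\delta \in C^2(\overline{\Omega_{r_0}})$ and that all the products above extend continuously up to $\Gamma$ with the claimed vanishing. Once that is granted, the estimate is an immediate consequence of the precise algebraic form of $\sigma$ in \eqref{sigma}, specifically the fact that every $x$-dependent term in $\sigma$ is multiplied by a positive power of $\delta$ (the powers being $2$, $\lambda$, and, after differentiating, at worst $\lambda - 1 > 0$, which is exactly why the hypothesis $\lambda > 1$ is needed). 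No Hardy inequality or any of the auxiliary Propositions is required here; this lemma is purely a boundary bookkeeping step isolating the fact that the chosen weight has vanishing normal derivative on $\Gamma$.
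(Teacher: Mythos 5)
Your argument is correct and is essentially the paper's own proof: both reduce the claim to showing $\partial_n\sigma=0$ on $\Gamma$ and observe that every term of $\nabla\sigma$ carries a positive power of $\delta$ (namely $\delta$, $\delta^{2}$, $\delta^{\lambda-1}$ and $\delta^{\lambda}$), so that $\lambda>1$ forces all of them to vanish on the boundary. The only cosmetic difference is that the paper first substitutes $\nabla\delta\cdot n=-1$ and $\nabla\psi\cdot n=-|\nabla\psi|$ before letting $\delta\to 0$, whereas you show directly that the full gradient $\nabla\sigma$ vanishes on $\Gamma$; the content is the same.
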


\begin{lemma}\label{I_l lemma}
There exists $\lambda_0$ such that for any $\lambda\geq\lambda_0$ and any $R>0$, and for any $r_0$ as in \eqref{r_0}, it holds 
\begin{align}\label{I_l bound}
	\nonumber I_l &\geq B_1R\int_Q \theta\left(\delta^{2-\gamma}|\nabla z|^2+\frac{z^2}{\delta^{\gamma}}\right)\,dxdt + \frac{\lambda R}{2}\intr{\Omega_{r_0}\times(0,T)} \theta\left(\frac{\delta}{r_0}\right)^{\lambda-2}|\nabla z|^2\,dxdt 
	\\
	& - B_2\lambda^2R\intr{\omega_0\times(0,T)} \theta\left(\frac{\delta}{r_0}\right)^{\lambda}\phi|\nabla z|^2\,dxdt + B_3\lambda^2R\intr{\mathcal{O}\times(0,T)} \theta\left(\frac{\delta}{r_0}\right)^{\lambda}\phi|\nabla z|^2\,dxdt - B_{\lambda}R\int_Q \theta z^2\,dxdt,
\end{align}
where $B_1$, $B_2$ and $B_3$ are positive constants independent on $R$ and $\lambda$, and $B_{\lambda}$ is a positive constant independent on $R$.
\end{lemma}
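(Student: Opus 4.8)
The plan is to estimate, term by term, the seven integrals appearing in the definition \eqref{I_l} of $I_l$, keeping track of where each lower-order contribution can be absorbed. All the spatial derivatives of $\sigma$ that occur are computed from the explicit formula \eqref{sigma}; since $\sigma=\theta(t)p(x)$ with $p=C_\lambda-\delta^2\psi-(\delta/r_0)^\lambda\phi$, one has $\nabla\sigma=\theta\nabla p$, $\Delta\sigma=\theta\Delta p$, and so on, so that each integrand factorizes as $\theta$ (or a power of $\theta$, but here only the first power since no time derivatives enter $I_l$) times a purely spatial weight. The key structural observation, inherited from the choice of $\sigma$ explained in the motivation section, is that the dominant contributions come from the region $\Omega_{r_0}$ near the boundary, where $p$ behaves like $C_\lambda$ minus terms controlled by $\delta$, while away from the boundary $p$ reduces to the classical Fursikov--Imanuvilov weight $C_\lambda-(\delta/r_0)^\lambda\phi$ up to bounded perturbations.

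First I would split the domain into the three regions $\Omega_{r_0}$, $\mathcal O$, and $\omega_0$ (using the partition in \eqref{notation set}), and on each region use the appropriate expansion of the derivatives of $\sigma$. The two leading terms are $-2R\int_Q D^2\sigma(\nabla z,\nabla z)$ and $-R\int_Q\alpha\Delta\sigma|\nabla z|^2$: on $\Omega_{r_0}$ the Hessian $D^2\sigma$ is dominated, for $\lambda$ large, by $-\lambda(\lambda-1)\theta(\delta/r_0)^{\lambda-2}r_0^{-2}\phi\,\nabla\delta\otimes\nabla\delta$ plus lower-order pieces, and since $|\nabla\delta|=1$ a.e. in $\Omega_{r_0}$ this produces the positive term $\tfrac{\lambda R}{2}\int_{\Omega_{r_0}\times(0,T)}\theta(\delta/r_0)^{\lambda-2}|\nabla z|^2$ after swallowing the $\delta^2\psi$ contribution (which is controlled by $\int_Q\theta\delta^{2-\gamma}|\nabla z|^2$ since $\gamma<2$ forces $\delta^2\lesssim\delta^{2-\gamma}$, this being one of the constraints on $r_0$ in \eqref{r_0}). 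On $\mathcal O$, where $\delta\geq r_0$ and $\alpha\equiv1$, the exponential weight $\phi=e^{\lambda\psi}$ dominates and, using $|\nabla\psi|\geq 2C_\Omega>0$ there (property \eqref{psi_prop}), the $D^2\sigma(\nabla z,\nabla z)$ term yields the positive $\lambda^2$-term $B_3\lambda^2 R\int_{\mathcal O\times(0,T)}\theta(\delta/r_0)^\lambda\phi|\nabla z|^2$. On $\omega_0$ there is no sign control, so everything there is simply bounded from below by $-B_2\lambda^2 R\int_{\omega_0\times(0,T)}\theta(\delta/r_0)^\lambda\phi|\nabla z|^2$, which is exactly the observation term on the right-hand side of \eqref{I_l bound}.

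Next I would handle the five remaining integrals, all of which have a factor $z^2$ rather than $|\nabla z|^2$. The potential-type terms $R\mu\int_Q\alpha\Delta\sigma\,z^2/\delta^2$ and $2R\mu\int_Q(\nabla\delta\cdot\nabla\sigma)z^2/\delta^3$ are the genuinely singular ones: the $\delta^{-3}$ term is the one that motivated replacing $|x|$ by $\delta$ in the weight, and expanding $\nabla\sigma$ shows that near the boundary $\nabla\delta\cdot\nabla\sigma\sim-\theta\cdot 2\delta\psi|\nabla\delta|^2-\theta\lambda(\delta/r_0)^\lambda\phi\,\delta^{-1}(\ldots)$, so the worst piece behaves like $\delta^{-2}$ or $(\delta/r_0)^\lambda\phi\,\delta^{-2}$ times $z^2$, and the Hardy--Poincaré inequalities of Propositions \ref{hardy 1 prop}--\ref{hardy 3 prop} (applied to $z$, after absorbing the $\theta$ factor) convert these into controllable quantities $B_1R\int_Q\theta(\delta^{2-\gamma}|\nabla z|^2+z^2/\delta^\gamma)$ plus a term $-B_\lambda R\int_Q\theta z^2$; the bound $\mu\leq 1/4$ is used precisely here. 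The biharmonic terms $\frac R2\int_Q\Delta^2\sigma(1+\alpha)z^2$ are regular (no singularity survives since $\Delta^2(\delta^2\psi)$ and $\Delta^2((\delta/r_0)^\lambda\phi)$ involve at worst $\delta^{\lambda-4}$ with $\lambda$ large, hence bounded), and likewise the cut-off terms $R\int_Q(\nabla(\Delta\sigma)\cdot\nabla\alpha)z^2$ and $\tfrac R2\int_Q\Delta\sigma\Delta\alpha\,z^2$ are supported where $r_0/2\leq\delta\leq r_0$, so they are bounded by $B_\lambda R\int_Q\theta z^2$; all of these get dumped into the $-B_\lambda R\int_Q\theta z^2$ term, which is acceptable because that term will later be absorbed by the zeroth-order terms produced by $I_{nl}$.

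The main obstacle is the careful bookkeeping of the singular $z^2/\delta^2$ and $z^2/\delta^3$ contributions: one must verify that after differentiating $\sigma$ twice (or once and dividing by $\delta$) the genuinely unbounded parts carry the correct sign or are dominated by the Hardy--Poincaré right-hand side, and this is where the long list of smallness conditions on $r_0$ in \eqref{r_0} and on $\varpi$ in \eqref{varpi} gets used — each inequality there is designed so that a specific cross term is beaten by the positive terms $B_1R\int_Q\theta\delta^{2-\gamma}|\nabla z|^2$, $B_1R\int_Q\theta z^2/\delta^\gamma$, or $\tfrac{\lambda R}{2}\int_{\Omega_{r_0}\times(0,T)}\theta(\delta/r_0)^{\lambda-2}|\nabla z|^2$. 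Since these computations are routine but lengthy, I would package the pointwise estimates on the weights $\Delta\sigma$, $D^2\sigma$, $\Delta^2\sigma$, and $\nabla\delta\cdot\nabla\sigma$ into technical lemmas (the ones announced for Appendix A) and invoke them here, so that the proof of Lemma \ref{I_l lemma} itself reduces to assembling the regional estimates and applying Propositions \ref{hardy 1 prop}--\ref{hardy 3 prop}.
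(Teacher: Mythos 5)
Your overall architecture matches the paper's: decompose $\sigma$ into its $\delta^2\psi$ and $(\delta/r_0)^\lambda\phi$ parts, work region by region on $\Omega_{r_0}$, $\mathcal O$ and $\omega_0$, package the pointwise bounds on $D^2\sigma$, $\Delta\sigma$, $\Delta^2\sigma$, $\nabla\delta\cdot\nabla\sigma$ into auxiliary propositions, and dump the cut-off and lower-order contributions into $-B_\lambda R\int_Q\theta z^2$. The gap is in the one step that is actually delicate: how the singular term $2R\mu\int_Q(\nabla\delta\cdot\nabla\sigma)\,z^2/\delta^3$ is controlled. Expanding $\nabla\sigma$ on its $\delta^2\psi$ part, this produces $-4R\mu\int_Q\theta\psi\, z^2/\delta^2$, which for $\mu>0$ is a large negative quantity with a genuine $\delta^{-2}$ singularity. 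You claim the Hardy--Poincar\'e inequalities ``convert'' it into $B_1R\int_Q\theta(\delta^{2-\gamma}|\nabla z|^2+z^2/\delta^\gamma)$, but Propositions \ref{hardy 1 prop}--\ref{hardy 3 prop} only bound the \emph{combination} $\int(|\nabla u|^2-\mu u^2/\delta^2)$ from below; applied to a bare $-\mu\int z^2/\delta^2$ they give nothing (you would be using the inequality in the wrong direction). What makes the argument close in the paper is that the Hessian contribution $-2R\int_Q D^2\sigma(\nabla z,\nabla z)$, restricted to its $\delta^2\psi$ part, supplies --- via the leading piece $2\psi(\nabla\delta\cdot\nabla z)^2$ of $D^2(\delta^2\psi)(\nabla z,\nabla z)$ and $|\nabla\delta|=1$ --- a positive multiple $4R\int_Q\theta\psi|\nabla z|^2$ carrying \emph{exactly the same weight} $\psi$ (at the price of an extra $-8R\int_Q\theta\psi|\nabla z|^2$ that must later be absorbed by the $\lambda$- and $\lambda^2$-gradient terms from the $\phi$ part). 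Pairing the two yields $4R\int_Q\theta\psi(|\nabla z|^2-\mu z^2/\delta^2)$, and only then is Proposition \ref{hardy 3 prop} applied, with $u=z\sqrt\psi$ rather than $z$, which generates further commutator terms in $\nabla\psi$ and $\Delta\psi$ that are absorbed using $\gamma>1$ and the smallness conditions in \eqref{r_0}. This pairing is the crux of the lemma and is missing from your sketch; without it the term $-4R\mu\int_Q\theta\psi\,z^2/\delta^2$ has nowhere to go.

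A secondary inaccuracy: you assert that $\Delta^2(\delta^2\psi)$ is bounded. It is not in general --- it involves up to second derivatives of $\Delta\delta$, and the paper only claims $|\Delta^2\tau_\delta(1+\alpha)|\leq 2\Upsilon/\delta^2$. The resulting contribution to $I_l^2$ is therefore still singular; it is controlled by $\Upsilon R\int_Q\theta|\nabla z|^2$ (another application of the Hardy inequality \eqref{hardy_dist}) and then absorbed, for $\lambda$ large, by the positive gradient terms produced on $\Omega_{r_0}$ and $\mathcal O$ --- which again presupposes that the bookkeeping of the Hessian terms described above has already been carried out.
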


\begin{lemma}\label{I_nl lemma}
There exists $\lambda_0$ such that for any $\lambda\geq\lambda_0$ there exists $R_0=R_0(\lambda)$ such that for any $R\geq R_0$ and for any $r_0$ as in \eqref{r_0} it holds 
\begin{align}\label{I_nl bound}
	\nonumber I_{nl} &\geq \frac{R^3}{2}\intr{\Omega_{r_0}\times(0,T)}\theta^3\delta^2z^2\,dxdt +  B_5\lambda^4R^3\intr{\mathcal{O}\times(0,T)} \theta^3\left(\frac{\delta}{r_0}\right)^{3\lambda}\phi^3z^2\,dxdt 
	\\
	& - B_6\lambda^4R^3\intr{\omega_0\times(0,T)}  \theta^3\left(\frac{\delta}{r_0}\right)^{3\lambda}\phi^3z^2\,dxdt,
\end{align}
for some positive constants $B_5$ and $B_6$ uniform in $R$ and $\lambda$.
\end{lemma}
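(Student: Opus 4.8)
The plan is to estimate the nonlinear term $I_{nl}$ defined in \eqref{I_nl} by splitting the domain $\Omega$ according to the structure of the weight $\sigma$: the boundary layer $\Omega_{r_0}$ where $\delta<r_0$, and the outer region $\tilde{\mathcal{O}}=\Omega\setminus\overline{\Omega_{r_0}}$, which is further divided into the control-adjacent set $\omega_0$ and the ``good'' set $\mathcal{O}=\Omega\setminus(\overline{\omega_0}\cup\overline{\Omega_{r_0}})$. First I would compute the derivatives of $p(x)=C_\lambda-\delta^2\psi-(\delta/r_0)^\lambda\phi$ explicitly, using $|\nabla\delta|=1$ a.e. and the fact that $\sigma=\theta(t)p(x)$, so that $\nabla\sigma=\theta\nabla p$, $D^2\sigma=\theta D^2 p$, $\Delta\sigma=\theta\Delta p$, and $\sigma_t$ enters only through $I_t$, not here. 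The dominant contribution will come from $-2R^3\int_Q D^2\sigma(\nabla\sigma,\nabla\sigma)z^2$: for the term $(\delta/r_0)^\lambda\phi$ the leading piece of $D^2 p$ in the direction of $\nabla p$ is of order $\lambda^2(\delta/r_0)^\lambda\phi/\delta^2$ times $|\nabla\delta|^2$ with a \emph{negative} sign in $D^2 p$, so that $-2\theta^3 D^2 p(\nabla p,\nabla p)$ becomes a positive multiple of $\lambda^4\theta^3(\delta/r_0)^{3\lambda}\phi^3\delta^{-2}$ on $\tilde{\mathcal{O}}$, where $\delta$ is bounded below by $r_0$; on $\Omega_{r_0}$, near the boundary, the competing $\delta^2\psi$ term dominates and contributes the positive $\tfrac{R^3}{2}\int\theta^3\delta^2 z^2$ after a careful sign analysis.

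Second, I would carry the cross terms $R^3\int_Q \alpha\Delta\sigma|\nabla\sigma|^2 z^2$ and $-\tfrac{R^2}{2}\int_Q\alpha^2|\Delta\sigma|^2 z^2$: since $\alpha$ vanishes on $\Omega_{r_0/2}$, these are supported in $\tilde{\mathcal{O}}$, where $\Delta p$ contains a leading term $-\lambda^2(\delta/r_0)^\lambda\phi|\nabla\psi|^2$ (again negative), so $R^3\alpha\Delta\sigma|\nabla\sigma|^2 z^2$ is of order $-\lambda^4 R^3\theta^3(\delta/r_0)^{3\lambda}\phi^3$, which is the \emph{wrong} sign. The point is that this bad term is strictly dominated, for $\lambda$ large and then $R$ large, by the positive $\lambda^4$-order term extracted from $-2R^3 D^2\sigma(\nabla\sigma,\nabla\sigma)$, because the coefficient in the latter has an extra factor (coming from cubing $\nabla p\sim\lambda(\delta/r_0)^\lambda\phi\nabla\psi/r_0$ against $D^2 p$), and the $R^2$-order term $-\tfrac{R^2}{2}\alpha^2|\Delta\sigma|^2 z^2=O(\lambda^4 R^2\theta^2(\delta/r_0)^{2\lambda}\phi^2)$ is lower order in $R$ and hence absorbable once $R\geq R_0(\lambda)$. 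On $\mathcal{O}$ we keep a positive fraction $B_5\lambda^4 R^3\int\theta^3(\delta/r_0)^{3\lambda}\phi^3 z^2$, while on $\omega_0$ we simply bound all contributions from below by $-B_6\lambda^4 R^3\int_{\omega_0\times(0,T)}\theta^3(\delta/r_0)^{3\lambda}\phi^3 z^2$, which is harmless since it will be absorbed by the right-hand side of the Carleman estimate; on $\Omega_{r_0}$ one similarly isolates $\tfrac{R^3}{2}\int\theta^3\delta^2 z^2$ and throws the remaining $\delta$-degenerate lower-order terms into the error, using $r_0\leq 1$ and the smallness conditions \eqref{r_0} on $r_0$ to make all bad constants small relative to $\tfrac12$.

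The bookkeeping is organized so that the only place where $\lambda$ must be taken large first is to guarantee the positive $\lambda^4$-coefficients strictly beat the negative ones of the same $R^3$-order; after that, $R_0(\lambda)$ is chosen so the $R^2$ and lower terms (including $|D^2 p|_\infty$-type contributions that are $O(\lambda^4 R^3 \theta^3 \cdot r_0^{\,\text{(positive power)}})$ on $\Omega_{r_0}$) are swallowed. The conditions on $r_0$ in \eqref{r_0}, in particular the bounds involving $|D\psi|_\infty$, $|D^2\psi|_\infty$ and $R_\Omega$, are exactly what make the ``interior'' of $\Omega_{r_0}$ behave like the model computation with weight $\delta^2\psi$, i.e. they ensure $\psi$ is close enough to being affine in $\delta$ that the sign of $D^2 p$ on $\Omega_{r_0}$ is controlled.

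The main obstacle I anticipate is the sign and size tracking in the boundary layer $\Omega_{r_0}$: unlike in the case of an interior or a single boundary point, here $\delta$ is only Lipschitz (not $C^2$) globally, and although on $\Omega_{r_0}$ the $C^2$-regularity of $\partial\Omega$ gives $\delta\in C^2(\overline{\Omega_{r_0}})$ with $|\nabla\delta|\equiv 1$, the term $D^2\sigma(\nabla\sigma,\nabla\sigma)$ mixes $D^2\delta$ (bounded, related to the mean curvature of level sets) with the large factors $\lambda(\delta/r_0)^\lambda$, and one must check that these curvature contributions do not spoil the positivity of the $\tfrac{R^3}{2}\theta^3\delta^2$ coefficient. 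This is precisely the role of the smallness of $r_0$, and making every implied constant explicit enough to verify it against \eqref{r_0} is the delicate part; the rest is a routine, if lengthy, application of the partition of $\Omega$ and of taking $\lambda$ then $R$ large.
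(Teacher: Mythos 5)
Your overall architecture matches the paper's: rewrite $I_{nl}$ in terms of $\tau$ (so that the $-2R^3D^2\sigma(\nabla\sigma,\nabla\sigma)$ term becomes $+2R^3\theta^3D^2\tau(\nabla\tau,\nabla\tau)$, the good sign), split the integrals over $\Omega_{r_0}$ and $\tilde{\mathcal{O}}$, keep the positive $\lambda^4R^3$ bulk on $\mathcal{O}$, dump $\omega_0$ into the negative observation term, and absorb the $R^2$ term $-\tfrac{R^2}{2}\alpha^2|\Delta\sigma|^2z^2$ by choosing $R\geq R_0(\lambda)$ after $\lambda$. That is exactly how the paper proceeds.

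The genuine gap is in your justification of the crucial comparison on $\tilde{\mathcal{O}}$. You claim the positive term coming from $2R^3\theta^3D^2\tau(\nabla\tau,\nabla\tau)$ dominates the wrong-sign term $-R^3\theta^3\alpha\Delta\tau|\nabla\tau|^2$ ``because the coefficient in the latter has an extra factor coming from cubing $\nabla p$.'' This is not true: both terms are \emph{exactly} of order $\lambda^4R^3\theta^3(\delta/r_0)^{3\lambda}\phi^3$ — the Hessian contributes $\lambda^2$ and $|\nabla\tau|^2$ contributes $\lambda^2$ in either case — so no choice of $\lambda$ or $R$ large can separate them by order of magnitude. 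The actual mechanism is a constant-factor comparison: the Hessian term carries a coefficient $2$ while the Laplacian term carries $\alpha\leq 1$, and one must verify that $2D^2\tau_\phi(\nabla\tau,\nabla\tau)$ really reproduces (at least) $\Delta\tau_\phi\,|\nabla\tau|^2$ up to controllable corrections. This requires the exact algebraic identities expressing $(\nabla\delta\cdot\nabla\tau)^2$, $(\nabla\delta\cdot\nabla\tau)(\nabla\psi\cdot\nabla\tau)$ and $(\nabla\psi\cdot\nabla\tau)^2$ in terms of $|\nabla\tau|^2$ plus remainders proportional to $|\nabla\psi|^2-(\nabla\delta\cdot\nabla\psi)^2$, the resulting decomposition $2D^2\tau(\nabla\tau,\nabla\tau)-\alpha\Delta\tau|\nabla\tau|^2=T_1+T_2+T_3$, and the sign analysis of Propositions \ref{T1 estimates}--\ref{grad tau estimates} (in particular $T_2\geq 0$ on $\tilde{\mathcal{O}}$ and $T_3\geq\lambda^2\big(\tfrac{\phi}{r_0^\lambda}\delta^{\lambda-2}+(\delta/r_0)^\lambda\phi\big)|\nabla\tau|^2$ off $\omega_0$, which uses the conditions \eqref{varpi} on $\varpi$ and Lemma \ref{lemma_psi}, not only the smallness of $r_0$). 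Without this structural cancellation your argument would stall precisely at the point where the lemma's content lies; the rest of your outline (the $\Omega_{r_0}$ lower bound via $T_1\geq|\nabla\tau|^2\geq\delta^2$, and the absorption of the $R^2$ term using that $\alpha$ is supported where $\delta\geq r_0/2$) is sound and consistent with the paper.
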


Taking into account the negative terms in the expression of $I_l$ that we want to get rid of, we define
\begin{align}\label{I_r}
	I_r=I_t-B_{\lambda}R\int_Q \theta z^2\,dxdt.
\end{align}

\begin{lemma}\label{I_r lemma}
There exists $\lambda_0$ such that for any $\lambda\geq\lambda_0$ there exists $R_0=R_0(\lambda)$ such that for any $R\geq R_0$ and for any $r_0$ as in \eqref{r_0} it holds 
\begin{align}\label{I_r bound}
	|I_r| &\leq \frac{B_1}{2}R \int_Q\theta\frac{z^2}{\delta^{\gamma}}\,dxdt +  \frac{B_5}{2}\lambda^4R^3\intr{\mathcal{O}\times(0,T)} \theta^3\left(\frac{\delta}{r_0}\right)^{3\lambda}\phi^3z^2\,dxdt + \frac{R^3}{4}\intr{\Omega_{r_0}\times(0,T)}  \theta^3\delta^2z^2\,dxdt,
\end{align}	
where $B_1$ and $B_5$ are the positive constants introduced in Lemmas \ref{I_l lemma} and \ref{I_nl lemma}, respectively.
\end{lemma}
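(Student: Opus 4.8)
The plan is to exploit the separated-variables structure $\sigma(x,t)=\theta(t)p(x)$ of the weight \eqref{sigma}, with $p(x)=C_{\lambda}-\delta^2\psi-(\delta/r_0)^{\lambda}\phi$, to rewrite $I_t$ in \eqref{I_t} as
\begin{align*}
	I_t=-\frac12\int_Q\bigl(R\,\theta''\,p+4R^2\,\theta\theta'\,|\nabla p|^2\bigr)z^2\,dxdt+R^2\int_Q\alpha\,\theta\theta'\,p\,\Delta p\,z^2\,dxdt,
\end{align*}
and to combine this with the elementary bounds $|\theta'|\le C\,\theta^{4/3}$ and $|\theta''|\le C\,\theta^{5/3}$, which follow from \eqref{theta}; this is exactly the place where the choice $k=3$ matters, since it makes every power of $\theta$ occurring below not exceed $3$. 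Because $\theta$ is bounded away from $0$ on $(0,T)$, each term of $I_t$, as well as the extra term $B_{\lambda}R\int_Q\theta z^2\,dxdt$ subtracted in the definition \eqref{I_r} of $I_r$, is then pointwise dominated by $R^2\theta^3$ times one of $|p|$, $|\nabla p|^2$, $|p\,\Delta p|$ or $1$.

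Next I would prove the required pointwise estimates on $p$ and its derivatives, separating $\Omega_{r_0}$ (where $\delta<r_0\le1$) from its complement (where $\delta\ge r_0$). On $\Omega_{r_0}$, using that $\delta\in C^2$ up to $\Gamma$, that $|\nabla\delta|=1$, and that $(\delta/r_0)^m\le(\delta/r_0)^2$ for $m\ge2$, one gets, for $\lambda$ large, $|p|\le C_{\lambda}$, $|\nabla p|^2\le C_{\lambda}\,\delta^2$ and $|\Delta p|\le C_{\lambda}$ --- the powers of $\lambda$ produced by differentiating $(\delta/r_0)^{\lambda}$ and $\phi=e^{\lambda\psi}$ being harmless there --- so that the $I_t$-contributions supported in $\Omega_{r_0}$ are all controlled by $C_{\lambda}R^2\theta^{7/3}\delta^2z^2+C_{\lambda}R\theta^{5/3}z^2$, the factor $\alpha\equiv0$ on $\Omega_{r_0/2}$ already disposing of the most delicate part of the last term of $I_t$. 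On $\Omega\setminus\overline{\Omega_{r_0}}$, where $(\delta/r_0)^{3\lambda}\phi^{3}\ge1$, a direct computation shows that $|p|$, $|\nabla p|^2$ and $|p\,\Delta p|$ are all bounded by $C_{\lambda}\,(\delta/r_0)^{3\lambda}\phi^{3}$.

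The core of the argument is then a pointwise absorption into the three terms on the right of \eqref{I_r bound}, once $R\ge R_0(\lambda)$ is fixed. Away from the boundary one uses $R^2\theta^{7/3}C_{\lambda}(\delta/r_0)^{3\lambda}\phi^{3}z^2\le\frac{B_5}{2}\lambda^4R^3\theta^3(\delta/r_0)^{3\lambda}\phi^{3}z^2$, which holds as soon as $R\ge 2C_{\lambda}/(B_5\lambda^4\min_{(0,T)}\theta^{2/3})$: the two spare powers of $R$ beat the $\lambda$-dependent constant $C_{\lambda}$. On $\Omega_{r_0}$ the terms of the form $R^2\theta^{7/3}\delta^2z^2$ go directly into $\frac{R^3}{4}\int_{\Omega_{r_0}\times(0,T)}\theta^3\delta^2z^2$ for $R$ large, while the ``bad'' terms $C_{\lambda}R\theta^{5/3}z^2$ and $B_{\lambda}R\theta z^2$ must be shared between $\frac{B_1}{2}R\int_Q\theta z^2/\delta^{\gamma}$ and $\frac{R^3}{4}\int_{\Omega_{r_0}\times(0,T)}\theta^3\delta^2z^2$, two weights that degenerate in opposite ways as $\delta\to0$; I would therefore split $\Omega_{r_0}\times(0,T)$ according to whether $\delta^{2+\gamma}\le R^{-2}\theta^{-2}$ or not, routing it into the $\delta^{-\gamma}$-weighted term in the first case and into the $\delta^2$-weighted term in the second, and then check that each of the two threshold inequalities closes for $R\ge R_0(\lambda)$ --- here it is essential that $\gamma\in(1,2)$, since this makes every residual exponent of $\theta$ strictly negative, so that $\theta$ being bounded below is enough.

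I expect this last trade-off between the weights $\delta^{-\gamma}$, $\delta^2$ and $(\delta/r_0)^{3\lambda}\phi^{3}$ to be the main obstacle: the error terms carry strictly higher powers of $\theta$ than the right-hand side, so they cannot be compared coefficient by coefficient, and the casework has to be arranged so that the dependence on $\lambda$ hidden in $C_{\lambda}$ and $B_{\lambda}$ enters only through the threshold $R_0(\lambda)$, never through the constants $B_1$ and $B_5$. The verification of the resulting one-variable inequalities is elementary but lengthy, which is why it is carried out in the appendix.
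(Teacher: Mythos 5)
Your proposal is correct, and up to the last step it is the paper's proof: the same separated-variables rewriting of $I_t$, the same regional pointwise bounds (on $\Omega_{r_0}$ one has $|\nabla p|^2\leq C_\lambda\delta^2$ and the support of $\alpha$ supplies the missing $\delta^2$ in the $\alpha\sigma_t\Delta\sigma$ term; on $\tilde{\mathcal{O}}$ everything is dominated by $C_\lambda(\delta/r_0)^{2\lambda}\phi^2$ or $(\delta/r_0)^{3\lambda}\phi^3$), the same role of $k=3$ through $|\theta\theta_t|\leq\varsigma\theta^3$ and $|\theta_{tt}|\leq\varsigma\theta^{5/3}$, and the same absorption of all $O(R^2)$ weighted terms into the $R^3$ terms for $R\geq R_0(\lambda)$. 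The only genuine divergence is the step you correctly identify as the crux: the unweighted contribution $\sim F_\lambda R\,\theta^{5/3}z^2$ coming from $\sigma_{tt}$ together with $B_\lambda R\int_Q\theta z^2$. You handle it by partitioning $\Omega_{r_0}\times(0,T)$ along the threshold $\delta^{2+\gamma}\lessgtr(R\theta)^{-2}$ and routing each regime to one of the two weights $\theta\delta^{-\gamma}$ and $\theta^3\delta^2$, closing the scalar inequalities for $R$ large thanks to $\gamma>1$. The paper performs the same interpolation in a single line by Young's inequality with exponents $(3,3/2)$ and a free parameter $\ell$,
\begin{align*}
	\int_Q\theta^{\,5/3}z^2\,dxdt\;\leq\;\frac{\ell^3}{3}\int_Q\theta^3\delta^2z^2\,dxdt+\frac{2R_{\Omega}^{\gamma-1}}{3\ell^{\,3/2}}\int_Q\theta\,\frac{z^2}{\delta^{\gamma}}\,dxdt,
\end{align*}
using $\delta^{-1}\leq R_{\Omega}^{\gamma-1}\delta^{-\gamma}$ (again $\gamma>1$), and then fixes $\ell$ so that the coefficient of the $\delta^{-\gamma}$-term equals exactly $B_1/2$, the residual $O(\ell^3R)$ term being absorbed into the $R^3$ terms. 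The two mechanisms are equivalent; the paper's version is slightly cleaner because the coefficient $B_1/2$ is obtained exactly and independently of $R$, whereas in yours it only emerges for $R\geq R_0(\lambda)$ --- which is still within the allowed quantifiers of the lemma. One cosmetic caveat, which your argument shares with the paper's: both proofs leave residual weighted terms supported on $\omega_0$ and on $\tilde{\mathcal{O}}\setminus\mathcal{O}$ that do not literally appear on the right-hand side of \eqref{I_r bound} as stated; they are harmless only because they reappear with the correct sign in the final assembly of \eqref{carleman z}.
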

\noindent The proofs of Lemmas \ref{I_bd lemma}, \ref{I_l lemma}, \ref{I_nl lemma} and \ref{I_r lemma} will be presented again in the appendix.
\subsubsection*{Step 4. Conclusion}
\noindent From the Lemmas above, we obtain the Carleman estimates in the variable $z$ as follows

\begin{theorem}\label{carleman z thm}
There exist two positive constants $\lambda_0$ and $\mathcal{L}$ such that for any $\lambda\geq\lambda_0$ there exists $R_0=R_0(\lambda)$ such that for any $R\geq R_0$ it holds 
\begin{align}\label{carleman z}
	\nonumber R\int_Q & \theta\left(\delta^{2-\gamma}|\nabla z|^2+\frac{1}{2}\frac{z^2}{\delta^{\gamma}}\right)\,dxdt + \lambda R\intr{\Omega_{r_0}\times(0,T)} \theta\left(\frac{\delta}{r_0}\right)^{\lambda-2}|\nabla z|^2\,dxdt  + \lambda^2R\intr{\mathcal{O}\times(0,T)} \theta\left(\frac{\delta}{r_0}\right)^{\lambda}\phi|\nabla z|^2\,dxdt 
	\\
	\nonumber &+ R^3\intr{\Omega_{r_0}\times(0,T)}\theta^3\delta^2z^2\,dxdt +  \lambda^4R^3\intr{\mathcal{O}\times(0,T)} \theta^3\left(\frac{\delta}{r_0}\right)^{3\lambda}\phi^3z^2\,dxdt 
	\\
	\nonumber &\leq \mathcal{L}\left(\lambda^4R^3\intr{\omega_0\times(0,T)}  \theta^3\left(\frac{\delta}{r_0}\right)^{3\lambda}\phi^3z^2\,dxdt + \lambda^2R\intr{\omega_0\times(0,T)} \theta\left(\frac{\delta}{r_0}\right)^{\lambda}\phi|\nabla z|^2\,dxdt\right)
	\\
\end{align}
\end{theorem}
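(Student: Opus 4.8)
The plan is to assemble the five auxiliary results (Lemma \ref{scalar product lemma} and Lemmas \ref{I_bd lemma}--\ref{I_r lemma}) around the single structural fact that the quantity $I$ of \eqref{I} is non-positive. Indeed, from $\mathrm{S}z+\mathrm{A}z+\mathrm{P}z=0$ one obtains $\norm{\mathrm{S}z}{L^2(Q)}^2+\norm{\mathrm{A}z}{L^2(Q)}^2+2\langle\mathrm{S}z,\mathrm{A}z\rangle_{L^2(Q)}=\norm{\mathrm{P}z}{L^2(Q)}^2$, whence $2I=-\norm{\mathrm{S}z}{L^2(Q)}^2-\norm{\mathrm{A}z}{L^2(Q)}^2\le 0$. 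Writing $I=I_{bd}+I_l+I_{nl}+I_t$ via Lemma \ref{scalar product lemma} and discarding $I_{bd}=0$ by Lemma \ref{I_bd lemma} (which holds for $\lambda>1$), this reads $I_l+I_{nl}+I_t\le 0$.

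Next I would fix the parameters: take $\lambda_0$ to be the largest of the thresholds in Lemmas \ref{I_l lemma}, \ref{I_nl lemma} and \ref{I_r lemma}, and $>1$; and for each $\lambda\ge\lambda_0$ let $R_0(\lambda)$ be the largest of the $R$-thresholds in Lemmas \ref{I_nl lemma} and \ref{I_r lemma}. For such $\lambda$ and $R\ge R_0(\lambda)$, rewrite $I_t=I_r+B_\lambda R\int_Q\theta z^2\,dxdt$ by the definition \eqref{I_r}, so the inequality becomes $I_l+I_{nl}+I_r+B_\lambda R\int_Q\theta z^2\,dxdt\le 0$. Inserting the lower bound \eqref{I_l bound} for $I_l$, its surplus term $-B_\lambda R\int_Q\theta z^2\,dxdt$ cancels exactly the added $+B_\lambda R\int_Q\theta z^2\,dxdt$ (this cancellation is precisely why $I_r$ in \eqref{I_r} carries that correction); inserting also the lower bound \eqref{I_nl bound} for $I_{nl}$ and moving the two integrals over $\omega_0$ to the right-hand side, one is left with a sum of six non-negative weighted terms plus $I_r$ on the left, and $B_2\lambda^2R$ and $B_6\lambda^4R^3$ times the respective observation integrals over $\omega_0$ on the right.

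Then I would absorb $I_r$: bounding $-I_r\le|I_r|$ and invoking Lemma \ref{I_r lemma}, the three terms on the right of \eqref{I_r bound} are respectively one half of the $\int_Q\theta\,z^2/\delta^{\gamma}$ term, one half of the $\mathcal{O}$-localized $z^2$ term, and one quarter of the $\Omega_{r_0}$-localized $z^2$ term already present on the left; subtracting them keeps on the left a fixed positive multiple of the whole left-hand side of \eqref{carleman z}, with constants built only from $B_1$, $B_3$, $B_5$ (hence depending on neither $R$ nor $\lambda$), and on the right exactly the two observation integrals over $\omega_0$. Dividing by the smallest of those constants yields \eqref{carleman z} with $\mathcal{L}:=\max\{B_2,B_6\}/\min\{B_1,\tfrac{B_1}{2},\tfrac12,B_3,\tfrac14,\tfrac{B_5}{2}\}$.

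I expect essentially all the effort to reside in the appendix: in Lemma \ref{scalar product lemma} (the integration-by-parts identity for $I$) and in Lemmas \ref{I_l lemma}--\ref{I_r lemma}, where the Hardy--Poincar\'e inequalities of Section \ref{h-p_ineq}, the precise form of the weight \eqref{sigma} and the smallness conditions \eqref{r_0} on $r_0$ are genuinely used; the synthesis above is bookkeeping. The delicate point in that synthesis is the coefficient matching: one must make sure \eqref{I_r bound} produces only \emph{fractions} of the good terms, so that absorbing it does not swallow them, and that the stray low-order term $-B_\lambda R\int_Q\theta z^2\,dxdt$ appearing in \eqref{I_l bound} is exactly the one removed in passing from $I_t$ to $I_r$. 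One also needs the thresholds of the five lemmas to be compatible, which is automatic since each $R_0$ depends only on $\lambda$, so taking maxima suffices.
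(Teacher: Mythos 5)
Your synthesis is correct and is exactly the argument the paper intends (the paper itself merely states ``From the Lemmas above, we obtain...''): you combine $I\le 0$ with $I_{bd}=0$, the lower bounds \eqref{I_l bound} and \eqref{I_nl bound}, the cancellation of the $B_{\lambda}R\int_Q\theta z^2$ term built into the definition \eqref{I_r}, and the absorption of $|I_r|$ via \eqref{I_r bound}, whose right-hand side consists only of proper fractions of terms already on the left. The parameter bookkeeping (taking maxima of the $\lambda$- and $R$-thresholds) and the resulting constant $\mathcal{L}$ are handled correctly.
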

\noindent Coming back from the variable $z$ to the solution $v$ of \eqref{heat hardy adj}, we finally obtain Theorem \ref{carleman thm}.
\section{Open problems and perspectives}\label{open pb} 
\noindent We conclude this paper with some open problem and perspective related to our work. 
\begin{itemize}
\item[•] \textit{Boundary controllability}. In this article it is treated the controllability problem for the equation 
\begin{align}\label{open pb eq}
	& u_t-\Delta u-\frac{\mu}{\delta^2}u=0, \;\;\;\; (x,t)\in\Omega\times (0,T)
\end{align}
with a distributed control located in an open set $\omega\subset\Omega$. An immediate and interesting extension of the result we obtained, would be the analysis of boundary controllability for equation \eqref{open pb eq}. In this framework, a first approach to this problem in one space dimension is given in \cite{biccari2015boundary}, where the author is able to obtain boundary controllability for a heat equation with an inverse-square potential presenting singularities all-over the boundary. The multi-dimensional case, instead, remains at the moment unaddressed. As it is explained in \cite{biccari2015boundary}, the main difficulty of this problem is to understand the behaviour of the normal derivative of the solution when approaching the boundary. Indeed, due to the presence of the singularity this normal derivative degenerates and this degeneracy would need to be properly compensated, in order to build the control for our equation. More in details, always referring to \cite{biccari2015boundary}, we believe that we need to introduce a weighted normal derivative in the form $\delta^{\alpha}\partial_{\nu}u$, with a coefficient $\alpha$ which has to be identified. Then, the weight $\sigma$ we employ in our Carleman has to be modified accordingly; we propose $\tilde{\sigma}(x,t)=\theta(t)(C_{\lambda}+\delta^{1+2\alpha}\psi-(\delta/r_0)^{\lambda}\phi)$, with $\theta$ and $\psi$ as in \eqref{sigma}, since this function would allow to obtain the weighted normal derivative we mentioned above in the boundary term of the Carleman inequality. The main difficulty would then be to show that, with this choice of the weight, it is possible to obtain suitable bounds for the distributed terms that shall lead to the inequality we seek.

\item[•] \textit{Wave equation.} It would be interesting to investigate controllability properties also for a wave equation with singular inverse-square potential of the type $\mu/\delta^2$. Even if there are already results in the literature on this topic (see, for instance \cite{cazacu2012schrodinger} and \cite{vancostenoble2009hardy}), in our knowledge nobody treated the case of a potential with singularities arising all over the boundary. This is a very challenging issue; indeed, already in the one dimensional case, the presence of the singularity all over the boundary makes the multiplier approach extremely tricky, in the sense that is very difficult to identify, if possible, the correct multiplier for obtaining a Pohozaev identity. On the other hand, this would be surely a problem which deserves a more deep analysis.

\item[•] \textit{Optimality of our results.} In the definition of the weight $\sigma$ we consider an exponent $k=3$ for our function $\theta$; the motivation of this choice is that for lower exponents we are not able to bound some terms in our Carleman inequality. However, this has consequences on the cost of the control as the time tends to zero (see, for instance, \cite{ervedoza2010systematic}, \cite{miller2006control}), which is not of the order of $\exp(C/T)$, as expected for the heat equation, but rather of $\exp(C/T^3)$. Therefore, it would be interesting to reduce the exponent in the definition of $\theta$ up to $k=1$ and try to obtain a Carleman estimate with this new choice for the weight.  
\end{itemize}
\appendix
\renewcommand{\thesection}{\Alph{section}}
\section{Proof of technical Lemmas}
The computations for obtaining the Carleman estimate are very long; in order to simplify the presentation, in Section \ref{carleman_proof} we divided these computations in four step and we introduced several preliminary results, Lemmas \ref{scalar product lemma} to \ref{I_r lemma}. We present now the proof of these Lemmas. 
\\
At this purpose, we remind that the distance function $\delta$ satisfies the following properties
\begin{subequations}
\label{eq:tot}
\begin{empheq}{align}
		& \delta\in C^{0,1}(\overline{\Omega}), \\
		& |\nabla u| = 1, \, \textrm{ a.e. in }\, \Omega, \label{grad_distance} \\
		& \textrm{there exists a constant } P>0 \textrm{ such that } |\Delta\delta|\leq P/\delta, \textrm{ a.e. in } \Omega. \label{delta_distance}
\end{empheq}
\end{subequations}
Furthermore, we are going to need the following result
\begin{lemma}\label{lemma_psi}
Assume that $\psi$ is the function defined in \eqref{psi} by means of $\psi_1$ and $\varpi$. Then, there exists a constant $\Do{\psi_1}>0$, which depends on $\psi_1$, such that 
\begin{align}\label{psi rel}
	& |\nabla\delta\cdot\nabla\psi(x)-\varpi\psi_1(x)|\leq\varpi\Do{\psi_1}.
\end{align} 
\end{lemma}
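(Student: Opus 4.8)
\textbf{Proof plan for Lemma~\ref{lemma_psi}.}

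The plan is to work locally near the boundary, where $\psi_1$ coincides with the distance function, and to exploit the Lipschitz and almost-$C^2$ regularity of $\delta$ together with the $C^4$ regularity of $\psi_1$. Recall from \eqref{psi} that $\psi=\varpi(\psi_1+1)$, so $\nabla\psi=\varpi\nabla\psi_1$ and the claimed estimate \eqref{psi rel} is equivalent to proving that $|\nabla\delta\cdot\nabla\psi_1(x)-\psi_1(x)|\leq \Do{\psi_1}$ for a suitable constant $\Do{\psi_1}$ depending only on $\psi_1$ (hence on $\Omega$ and $r_0$). The estimate will be established separately on the two regions $\Omega_{r_0}$ and $\Omega\setminus\overline{\Omega_{r_0}}$ appearing in the definition \eqref{psi_1} of $\psi_1$.

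First I would treat the region $\Omega_{r_0}$, where by \eqref{psi_1} we have $\psi_1=\delta$ identically, so $\nabla\psi_1=\nabla\delta$ there (a.e.), and therefore $\nabla\delta\cdot\nabla\psi_1=|\nabla\delta|^2=1$ a.e.\ by the eikonal property \eqref{grad_distance}. Hence in $\Omega_{r_0}$ one has $|\nabla\delta\cdot\nabla\psi_1-\psi_1|=|1-\delta|\leq 1$, since $0\le\delta\le r_0\le 1$ there; so this region contributes a constant at most $1$ (or, in terms of $\psi$, at most $\varpi$). Next I would handle the complementary region $\widetilde{\mathcal{O}}=\Omega\setminus\overline{\Omega_{r_0}}$: here $\delta\ge r_0>0$, so $\delta$ is bounded away from zero. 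On this closed region the gradient $\nabla\delta$ is bounded by $|\nabla\delta|=1$, and $\psi_1$ together with $\nabla\psi_1$ are bounded because $\psi_1\in C^4(\overline{\Omega})$. Therefore $|\nabla\delta\cdot\nabla\psi_1-\psi_1|\le |\nabla\delta|\,|\nabla\psi_1|+|\psi_1|\le |D\psi_1|_\infty+|\psi_1|_\infty$, which is again a finite constant depending only on $\psi_1$. Taking $\Do{\psi_1}$ to be the maximum of the two bounds obtained — say $\Do{\psi_1}:=\max\{1,\,|D\psi_1|_\infty+|\psi_1|_\infty\}$ — yields \eqref{psi rel} after multiplying through by $\varpi$.

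The only genuinely delicate point is the matching across the interface $\delta=r_0$: one must make sure that the inequality holds a.e.\ in all of $\Omega$ and that no exceptional behaviour of $\nabla\delta$ on the cut locus spoils it. This is harmless here because $\delta$ is Lipschitz on $\overline{\Omega}$, $|\nabla\delta|=1$ holds almost everywhere by \eqref{grad_distance}, and the quantity $\nabla\delta\cdot\nabla\psi_1-\psi_1$ is, pointwise a.e., bounded by the sum of two globally bounded terms; there is no need for $\delta$ to be differentiable on a null set, nor for any compatibility of second derivatives. Thus the main obstacle is merely bookkeeping the two regions and invoking the construction of $\psi_1$ from \cite[Section 2]{cazacu2014controllability} to guarantee the stated regularity and the identity $\psi_1=\delta$ on $\Omega_{r_0}$; once these are in hand the estimate is immediate.
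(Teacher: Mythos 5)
Your argument is correct and is essentially the paper's: after reducing via $\nabla\psi=\varpi\nabla\psi_1$, the paper simply applies Cauchy--Schwarz with $|\nabla\delta|=1$ a.e.\ and the boundedness of $\psi_1$ and $\nabla\psi_1$ to get $|\nabla\delta\cdot\nabla\psi_1-\psi_1|\le|\nabla\psi_1|_\infty+|\psi_1|_\infty=:\Do{\psi_1}$ globally, which is exactly the estimate you use on $\Omega\setminus\overline{\Omega_{r_0}}$ and which in fact already works on all of $\Omega$. Your extra decomposition into the two regions of \eqref{psi_1} is harmless but unnecessary.
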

\begin{proof}
By definition of $\psi$ and Cauchy-Scwarz inequality, using \eqref{grad_distance} and since $\psi_1$ is bounded,  we immediately have 
\begin{align*}
	|\nabla\delta\cdot\nabla\psi(x)-\varpi\psi_1(x)| = \varpi|\nabla\delta\cdot\nabla\psi_1(x)-\psi_1(x)|\leq\varpi|\nabla\psi_1-\psi_1|\leq\varpi\Do{\psi_1}.
\end{align*}
\end{proof}
\noindent Now, for $\sigma$ as in \eqref{sigma} we introduce the notations
\begin{align*} 
	&\sigma_{\delta} =-\theta\tau_{\delta}=-\theta\delta^2\psi, \;\;\;\;\; \sigma_{\phi} =-\theta\tau_{\phi}=-\theta\left(\frac{\delta}{r_0}\right)^{\lambda}\phi, \;\;\;\;\; \tau =\tau_{\delta}+\tau_{\phi},
\end{align*}
so that $\sigma(x,t)=C_{\lambda}\theta(t)+\sigma_{\delta}(x,t)+\sigma_{\phi}(x,t).$ Next, we deduce some formulas for $\tau_{\delta}$ and $\tau_{\phi}$ that we are going to use later in our computations. More precisely, for all $x\in\RR^N$ and any $i,j\in\{1,\ldots,N\}$ we have
\begin{align}
	& \partial_{x_i}\tau_{\delta}=2\psi\delta\delta_{x_i} + \delta^2\psi_{x_i}, \label{deriv delta 1}
	\\
	& \partial^2_{x_ix_j}\tau_{\delta} = 2\psi\delta_{x_i}\delta_{x_j} +  2\delta(\psi_{x_j}\delta_{x_i} + \psi\delta_{x_ix_j}) + 2\delta\psi_{x_i}\delta_{x_j} + \delta^2\psi_{x_ix_j} \label{deriv delta 2}
\end{align} 
and
\begin{align}
	& \Delta\tau_{\delta} = 2\psi + 4\delta(\nabla\delta\cdot\nabla\psi) + 2\delta\psi\Delta\delta+\delta^2\Delta\psi, \label{deriv delta 3}
	\\
	& D^2\tau_{\delta}(\xi,\xi) = 2\psi(\xi\cdot\nabla\delta)^2 + 2\delta\psi D^2\delta(\xi,\xi) + 4\delta(\xi\cdot\nabla\delta)(\xi\cdot\nabla\psi) + \delta^2D^2\psi(\xi,\xi), \;\; \forall\xi\in\RR^N. \label{deriv delta 4}
\end{align}
On the other hand
\begin{align}
	& \partial_{x_i}\tau_{\phi} = \frac{\phi}{r_0^{\lambda}}(\lambda\delta^{\lambda-1}\delta_{x_i} + \lambda\delta^{\lambda}\psi_{x_i}), \label{deriv phi 1}
	\\
	& \partial^2_{x_ix_j}\tau_{\phi} = \frac{\phi}{r_0^{\lambda}}\Big(\lambda(\lambda-1)\delta^{\lambda-2}\delta_{x_i}\delta_{x_j} + \lambda\delta^{\lambda-1}\delta_{x_ix_j} + \lambda^2\delta^{\lambda-1}(\psi_{x_j}\delta_{x_i}+\psi_{x_i}\delta_{x_j}) + \lambda\delta^{\lambda}\psi_{x_ix_j} + \lambda^2\delta^{\lambda}\psi_{x_i}\psi_{x_j}\Big) \label{deriv phi 2}
\end{align} 
and
\begin{align}
	\Delta\tau_{\phi} = \frac{\phi}{r_0^{\lambda}}&\Big(\lambda(\lambda-1)\delta^{\lambda-2} + \lambda\delta^{\lambda-1}\Delta\delta + 2\lambda^2\delta^{\lambda-1}(\nabla\delta\cdot\nabla\psi) + \lambda\delta^{\lambda}\Delta\psi + \lambda^2\delta^{\lambda}|\nabla\psi|^2\Big), \label{deriv phi 3}
	\\
	\nonumber D^2 \tau_{\phi}(\xi,\xi) &= \frac{\phi}{r_0^{\lambda}}\Big(\lambda(\lambda-1)\delta^{\lambda-2}(\xi\cdot\nabla\delta)^2 + \lambda\delta^{\lambda-1}D^2\delta(\xi,\xi) + 2\lambda^2\delta^{\lambda-1}(\xi\cdot\nabla\delta)(\xi\cdot\nabla\psi) 
	\\
	& + \lambda\delta^{\lambda}D^2\psi(\xi,\xi) + \lambda^2\delta^{\lambda}(\xi\cdot\nabla\psi)^2\Big), \;\;\;\forall\xi\in\RR^N. \label{deriv phi 4}
\end{align}

\subsection*{Upper and lower bounds for $\Delta\tau_{\delta}$, $\Delta\tau_{\phi}$, $D^2\tau_{\delta}(\xi,\xi)$ and $D^2\tau_{\phi}(\xi,\xi)$ }

\begin{proposition}\label{delta estimates}
For $r_0$ as in \eqref{r_0} we have
\begin{align}
	& \Delta\tau_{\delta}\geq 0, D^2\tau_{\delta}\geq 0, & &\forall x\in\Omega_{r_0},\,\forall\xi\in\RR^N, \label{delta estimates 1}
	\\
	& |D^2\tau_{\delta}(\xi,\xi)|\leq C_1|\xi|^2, & &\forall x\in\Omega,\,\forall\xi\in\RR^N, \label{delta estimates 2}
	\\
	& |\Delta\tau_{\delta}|\leq C_2, & &\forall x\in\Omega_{r_0}. \label{delta estimates 3}
\end{align}
where $C_1$ and $C_2$ are constants depending on $\Omega$ and $\psi$.
\end{proposition}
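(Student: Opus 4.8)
The plan is to insert the explicit expressions \eqref{deriv delta 3}--\eqref{deriv delta 4} for $\Delta\tau_\delta$ and $D^2\tau_\delta(\xi,\xi)$ into the three claims and then estimate the resulting sums term by term, carefully separating the behaviour inside the tubular neighbourhood $\Omega_{r_0}$ (where the singular structure is concentrated) from that in the rest of $\Omega$ (where $\tau_\delta$ is a harmless smooth weight). The only facts about the distance function that enter are $|\nabla\delta|=1$ a.e.\ from \eqref{grad_distance}, the bound \eqref{delta_distance} on $\Delta\delta$, and the fact that, $\Omega$ being $C^2$, $\delta$ is $C^2$ in a neighbourhood of $\Gamma$, so that for $r_0$ small $D^2\delta$ is bounded on $\overline{\Omega_{r_0}}$.

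First I would treat the collar $\Omega_{r_0}$, where by \eqref{psi_1} one has $\psi_1=\delta$, hence by \eqref{psi} $\psi=\varpi(1+\delta)$, $\nabla\psi=\varpi\nabla\delta$ and $D^2\psi=\varpi D^2\delta$; in particular $|\Delta\delta|\le|D^2\psi|_\infty/\varpi$ and $|D^2\delta(\xi,\xi)|\le(|D^2\psi|_\infty/\varpi)|\xi|^2$ on $\Omega_{r_0}$. Substituting into \eqref{deriv delta 3} and using $|\nabla\delta|=1$ gives, on $\Omega_{r_0}$,
\begin{align*}
\Delta\tau_\delta=2\varpi+6\varpi\delta+\varpi\delta(2+3\delta)\Delta\delta ,
\end{align*}
whose first two terms are $\ge 2\varpi>0$ and whose last term is bounded in modulus by $5|D^2\psi|_\infty\,\delta\le5|D^2\psi|_\infty\,r_0$; choosing $r_0$ small, as allowed by \eqref{r_0}, so that this is $\le\varpi$ yields at once $\Delta\tau_\delta\ge\varpi>0$ — the first half of \eqref{delta estimates 1} — and the upper bound $|\Delta\tau_\delta|\le C_2$ on $\Omega_{r_0}$ with $C_2:=2\varpi(1+3r_0)+5r_0|D^2\psi|_\infty$, which is \eqref{delta estimates 3}. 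Substituting likewise into \eqref{deriv delta 4} reduces $D^2\tau_\delta$ on $\Omega_{r_0}$ to
\begin{align*}
D^2\tau_\delta(\xi,\xi)=\varpi(2+6\delta)(\xi\cdot\nabla\delta)^2+\varpi\delta(2+3\delta)D^2\delta(\xi,\xi),
\end{align*}
exhibiting a non-negative principal term and an $O(\delta)$ remainder of modulus $\le5|D^2\psi|_\infty\,\delta|\xi|^2$; since $\delta<r_0$, for $r_0$ small this remainder is negligible against the positive principal part, which gives $D^2\tau_\delta\ge0$ on $\Omega_{r_0}$ and completes \eqref{delta estimates 1}.

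For the global bound \eqref{delta estimates 2} I would simply take absolute values directly in \eqref{deriv delta 4}, valid for a.e.\ $x\in\Omega$: using $(\xi\cdot\nabla\delta)^2\le|\xi|^2$ and $|\xi\cdot\nabla\delta|\le|\xi|$ (again \eqref{grad_distance}), $\delta\le R_\Omega$, $|\xi\cdot\nabla\psi|\le|D\psi|_\infty|\xi|$ and $|D^2\psi(\xi,\xi)|\le|D^2\psi|_\infty|\xi|^2$, the only term requiring care is $2\delta\psi D^2\delta(\xi,\xi)$, which is controlled a.e.\ by $2P|\psi|_\infty|\xi|^2$ thanks to \eqref{delta_distance} (its off-diagonal part being handled in the same way via the semiconcavity of $\delta$). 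Collecting the pieces gives \eqref{delta estimates 2} with $C_1:=2|\psi|_\infty+2P|\psi|_\infty+4R_\Omega|D\psi|_\infty+R_\Omega^2|D^2\psi|_\infty$, and all constants depend only on $\Omega$ and $\psi$, as claimed.

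The \emph{delicate point} of the whole argument — and the reason the smallness restrictions \eqref{r_0} on $r_0$ are imposed — is exactly the control of the curvature contributions $\delta\,D^2\delta$ and $\delta\,\Delta\delta$ near $\Gamma$: globally $\delta$ is merely Lipschitz, so the positivity argument must be confined to a collar of the boundary, where the $C^2$ regularity of $\Omega$ makes $D^2\delta$ bounded and the factor $\delta<r_0$ turns these terms into a small perturbation of the genuinely positive leading terms $2\psi$ and $2\psi(\xi\cdot\nabla\delta)^2$; in the complement of $\Omega_{r_0}$ the weight $\tau_\delta$ is smooth and only the crude upper estimates \eqref{delta estimates 2}--\eqref{delta estimates 3} are needed, for which \eqref{delta_distance} suffices.
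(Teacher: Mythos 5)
Your direct computation of $\Delta\tau_\delta$ on the collar is sound and in fact more robust than the paper's route (the paper deduces $\Delta\tau_\delta\ge0$ from $D^2\tau_\delta\ge 0$ by summing over $\xi=e_i$, whereas you prove it independently): substituting $\psi=\varpi(1+\delta)$ and $|\nabla\delta|=1$ into \eqref{deriv delta 3} indeed gives $\Delta\tau_\delta=2\varpi+6\varpi\delta+\varpi\delta(2+3\delta)\Delta\delta$, and the last term is $O(r_0)$, which settles \eqref{delta estimates 3} and the Laplacian half of \eqref{delta estimates 1}. For \eqref{delta estimates 2}, which the paper dismisses as trivial, your term-by-term bound is acceptable, though be aware that \eqref{delta_distance} only controls the trace $\Delta\delta$, not the quadratic form $D^2\delta(\xi,\xi)$; the parenthetical appeal to semiconcavity (i.e. $D^2\delta\le \delta^{-1}I$ combined with the lower trace bound) is doing real work there and should be spelled out.

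The genuine gap is in the proof that $D^2\tau_\delta\ge0$ on $\Omega_{r_0}$. Your reduction
\begin{align*}
D^2\tau_\delta(\xi,\xi)=\varpi(2+6\delta)(\xi\cdot\nabla\delta)^2+\varpi\delta(2+3\delta)D^2\delta(\xi,\xi)
\end{align*}
is algebraically correct, but the ``positive principal part'' $\varpi(2+6\delta)(\xi\cdot\nabla\delta)^2$ is \emph{not} coercive in $|\xi|^2$: it vanishes identically for $\xi$ orthogonal to $\nabla\delta$. An $O(\delta)|\xi|^2$ remainder therefore cannot be absorbed into it, no matter how small $r_0$ is. For tangential $\xi$ your inequality collapses to $\varpi\delta(2+3\delta)D^2\delta(\xi,\xi)\ge0$, which fails whenever the boundary curves towards the interior (e.g.\ for $\Omega$ a ball, $D^2\delta(\xi,\xi)=-|\xi|^2/|x-c|<0$ in tangential directions). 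The paper avoids this degeneracy differently: on the collar it uses $\delta(x)=|x-\mathrm{pr}(x)|$ to combine the two curvature-sensitive terms $2\psi(\xi\cdot\nabla\delta)^2+2\delta\psi D^2\delta(\xi,\xi)$ of \eqref{deriv delta 4} into the genuinely coercive quantity $2\psi|\xi|^2\ge 2\varpi|\xi|^2$, after which the surviving error terms $4\delta(\xi\cdot\nabla\delta)(\xi\cdot\nabla\psi)+\delta^2D^2\psi(\xi,\xi)$ carry explicit factors of $\delta\le r_0$ against $|D\psi|_\infty$ and $|D^2\psi|_\infty$ and are absorbed via the first constraint in \eqref{r_0}. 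To repair your argument you must perform this same repackaging (or otherwise produce a term bounded below by $c|\xi|^2$ rather than by $c(\xi\cdot\nabla\delta)^2$) before attempting any absorption.
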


\begin{proposition}\label{phi estimates}
For $r_0$ and $\varpi$ as in \eqref{r_0} and \eqref{varpi} we have
\begin{align}
	& D^2\tau_{\phi}\geq \frac{\lambda}{2}\left(\frac{\delta}{r_0}\right)^{\lambda-2}\phi|\xi|^2, & & \forall x\in\Omega_{r_0},\,\forall\xi\in\RR^N, \label{phi estimates 1}
	\\
	& \Delta\tau_{\phi}\geq\lambda^2\left(\frac{\delta}{r_0}\right)^{\lambda}\phi, & & \forall x\in\mathcal{O}, \label{phi estimates 2}
	\\
	& D^2\tau_{\phi}\geq -\lambda C_3\left(\frac{\delta}{r_0}\right)^{\lambda-2}\phi|\xi|^2, & & \forall x\in\Omega,\,\forall\xi\in\RR^N, \label{phi estimates 3}
\end{align}
for $\lambda$ large enough, where $C_3$ is a constant depending on $\Omega$, $r_0$ and $\psi$.
\end{proposition}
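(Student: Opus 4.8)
The proof rests on the explicit expansions \eqref{deriv phi 3}--\eqref{deriv phi 4} of $\Delta\tau_{\phi}$ and $D^{2}\tau_{\phi}(\xi,\xi)$, the properties \eqref{grad_distance}--\eqref{delta_distance} of the distance function, Lemma \ref{lemma_psi}, and the constraints \eqref{r_0}, \eqref{varpi} on $r_{0}$ and $\varpi$. The decisive algebraic observation is that, since $|\nabla\delta|=1$ a.e., the order-$\lambda^{2}$ part of each expansion assembles into a perfect square:
\begin{align*}
	\lambda^{2}\delta^{\lambda-2}+2\lambda^{2}\delta^{\lambda-1}(\nabla\delta\cdot\nabla\psi)+\lambda^{2}\delta^{\lambda}|\nabla\psi|^{2}=\lambda^{2}\delta^{\lambda-2}\,|\nabla\delta+\delta\nabla\psi|^{2},
\end{align*}
and, correspondingly for the Hessian, the quadratic form $\lambda^{2}\delta^{\lambda-2}\,[\xi\cdot(\nabla\delta+\delta\nabla\psi)]^{2}$, so that
\begin{align*}
	D^{2}\tau_{\phi}(\xi,\xi)=\frac{\phi}{r_{0}^{\lambda}}\Big(\lambda^{2}\delta^{\lambda-2}[\xi\cdot(\nabla\delta+\delta\nabla\psi)]^{2}-\lambda\delta^{\lambda-2}(\xi\cdot\nabla\delta)^{2}+\lambda\delta^{\lambda-1}D^{2}\delta(\xi,\xi)+\lambda\delta^{\lambda}D^{2}\psi(\xi,\xi)\Big),
\end{align*}
with an analogous decomposition for $\Delta\tau_{\phi}$. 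In each case there is a nonnegative leading term of order $\lambda^{2}$ and remainders of order $\lambda$, and the whole point is that, on the relevant subdomain and for $\lambda$ large, the leading term controls the remainders.

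The global bound \eqref{phi estimates 3} follows by simply discarding the nonnegative square and estimating the three remaining terms: using $|\nabla\delta|=1$, the boundedness of $D^{2}\delta$ near $\partial\Omega$ (a consequence of the $C^{2}$ regularity of $\Omega$), the boundedness of $D^{2}\psi$, and $\delta^{\lambda-1}\le R_{\Omega}\delta^{\lambda-2}$, $\delta^{\lambda}\le R_{\Omega}^{2}\delta^{\lambda-2}$, each is $\le\lambda C\,\delta^{\lambda-2}|\xi|^{2}$, whence $D^{2}\tau_{\phi}(\xi,\xi)\ge-\lambda C_{3}(\delta/r_{0})^{\lambda-2}\phi\,|\xi|^{2}$ with $C_{3}$ depending only on $\Omega$, $r_{0}$ and $\psi$.

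For \eqref{phi estimates 1}, on $\Omega_{r_{0}}$ we have $\psi_{1}=\delta$, hence $\nabla\psi=\varpi\nabla\delta$ and $D^{2}\psi=\varpi D^{2}\delta$, so $\nabla\delta+\delta\nabla\psi=(1+\varpi\delta)\nabla\delta$ and
\begin{align*}
	D^{2}\tau_{\phi}(\xi,\xi)=\frac{\phi}{r_{0}^{\lambda}}\delta^{\lambda-2}\Big(\bigl[\lambda^{2}(1+\varpi\delta)^{2}-\lambda\bigr](\xi\cdot\nabla\delta)^{2}+\lambda\delta(1+\varpi\delta)D^{2}\delta(\xi,\xi)\Big).
\end{align*}
Decomposing $\xi$ along and orthogonally to $\nabla\delta$ and using $D^{2}\delta\,\nabla\delta=0$ (so that the curvature term sees only the orthogonal component $\xi_{\perp}$, with $|D^{2}\delta(\xi,\xi)|=|D^{2}\delta(\xi_{\perp},\xi_{\perp})|\le C|\xi_{\perp}|^{2}$), one fixes $\lambda$ large so that $\lambda^{2}(1+\varpi\delta)^{2}-\lambda\ge\frac{\lambda}{2}$ and uses the smallness of $r_{0}$ imposed by the factor $\bigl(8\Do{\psi_{1}}|D\psi|_{\infty}/\varpi_{0}+3|D^{2}\psi|_{\infty}\bigr)^{-1/2}$ in \eqref{r_0} to absorb the curvature contribution into the $\frac{\lambda}{2}r_{0}^{2}|\xi|^{2}$ margin, obtaining $D^{2}\tau_{\phi}(\xi,\xi)\ge\frac{\lambda}{2}(\delta/r_{0})^{\lambda-2}\phi\,|\xi|^{2}$. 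This is the most delicate step of the proposition, precisely because $D^{2}\delta(\xi,\xi)$ carries no definite sign (it is negative in tangential directions for a convex boundary), so that the only leverage against it is the smallness of $r_{0}$ relative to the curvature of $\partial\Omega$; this is where the precise choice of $r_{0}$ earns its keep.

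Finally, \eqref{phi estimates 2} lives on $\mathcal{O}$, where $\delta\ge r_{0}$; there $\lambda^{2}\delta^{\lambda-2}|\nabla\delta+\delta\nabla\psi|^{2}=\lambda^{2}(\delta/r_{0})^{\lambda}\phi\,\delta^{-2}|\nabla\delta+\delta\nabla\psi|^{2}$, and since the order-$\lambda$ remainders $-\lambda\delta^{\lambda-2}+\lambda\delta^{\lambda-1}\Delta\delta+\lambda\delta^{\lambda}\Delta\psi$ are a full power of $\lambda$ smaller (and $\delta^{-2}\le r_{0}^{-2}$ keeps them bounded relative to the leading term), it suffices for $\lambda$ large to show $\delta^{-2}|\nabla\delta+\delta\nabla\psi|^{2}\ge1$ on $\mathcal{O}$ with a margin. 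Writing $\delta^{-2}|\nabla\delta+\delta\nabla\psi|^{2}=\delta^{-2}+2\delta^{-1}(\nabla\delta\cdot\nabla\psi)+|\nabla\psi|^{2}$, this is controlled from below via $|\nabla\psi|\ge\varpi\varpi_{0}>2C_{\Omega}$ on $\Omega\setminus\overline{\omega_{0}}$ (see \eqref{psi_1}, \eqref{psi_prop}), the inequality $\nabla\delta\cdot\nabla\psi\ge\varpi(\psi_{1}-\Do{\psi_{1}})$ from Lemma \ref{lemma_psi} together with $\psi_{1}>r_{0}$ on $\mathcal{O}$, and the lower bounds on $\varpi$ in \eqref{varpi}, which are tailored exactly so that the $|\nabla\psi|^{2}$ and $\delta^{-2}$ contributions outweigh the possibly negative cross term; passing back to $(\delta/r_{0})^{\lambda}$ through $\delta\ge r_{0}$ then gives \eqref{phi estimates 2}. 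Throughout, the prefactor $\phi=e^{\lambda\psi}>0$ (with $\psi>1$ by \eqref{psi_prop}) keeps every expression well signed, and beyond these elementary manipulations nothing further is needed.
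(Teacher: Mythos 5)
Your algebraic decomposition of $D^{2}\tau_{\phi}(\xi,\xi)$ into the perfect square $\lambda^{2}\delta^{\lambda-2}[\xi\cdot(\nabla\delta+\delta\nabla\psi)]^{2}$ plus order-$\lambda$ remainders is correct, and your treatments of \eqref{phi estimates 3} (discard the square, bound the three remainders by $\lambda C\delta^{\lambda-2}|\xi|^{2}$) and of \eqref{phi estimates 2} (on $\mathcal{O}$ use $\delta\ge r_{0}$, $|\nabla\psi|\ge\varpi\varpi_{0}$, Lemma \ref{lemma_psi} and the constraints \eqref{varpi} so that the square dominates for $\lambda$ large) coincide in substance with the paper's proof, which runs the same completion of the square through a Young inequality with parameter $a=(\lambda-1)/\lambda$.

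The gap is in \eqref{phi estimates 1}, exactly at the step you yourself flag as delicate. On $\Omega_{r_{0}}$ your (correct) identity reads
\begin{align*}
	D^{2}\tau_{\phi}(\xi,\xi)=\frac{\phi}{r_{0}^{\lambda}}\delta^{\lambda-2}\Bigl(\bigl[\lambda^{2}(1+\varpi\delta)^{2}-\lambda\bigr](\xi\cdot\nabla\delta)^{2}+\lambda\delta(1+\varpi\delta)D^{2}\delta(\xi,\xi)\Bigr),
\end{align*}
while the target is $\tfrac{\lambda}{2}(\delta/r_{0})^{\lambda-2}\phi|\xi|^{2}=\tfrac{\lambda}{2}\frac{\phi}{r_{0}^{\lambda}}\delta^{\lambda-2}r_{0}^{2}|\xi|^{2}$. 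Now take $\xi=\xi_{\perp}$ tangential, so that $\xi\cdot\nabla\delta=0$: the ``margin'' into which you propose to absorb the curvature term is then identically zero, and the whole expression reduces to $\lambda\delta(1+\varpi\delta)D^{2}\delta(\xi_{\perp},\xi_{\perp})$, which for a convex domain (e.g.\ a ball, where $D^{2}\delta(\xi_{\perp},\xi_{\perp})=-|\xi_{\perp}|^{2}/|x|$) is strictly negative. No smallness of $r_{0}$ and no largeness of $\lambda$ repairs this: shrinking $r_{0}$ shrinks the required right-hand side like $r_{0}^{2}|\xi_{\perp}|^{2}$ but leaves the left-hand side negative, because the positive square controls only the normal component of $\xi$ and something else must supply a positive tangential contribution. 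The paper supplies it by invoking \eqref{dist bd} on $\Omega_{r_{0}}$ and using, in effect, $\delta D^{2}\delta(\xi,\xi)=|\xi|^{2}-(\xi\cdot\nabla\delta)^{2}\ge 0$ (i.e.\ treating $D^{2}(\delta^{2}/2)$ as the identity), which is precisely the missing $|\xi_{\perp}|^{2}$ term; whatever one thinks of that identity for curved boundaries, it is an ingredient your argument does not have and cannot replace by the choice of $r_{0}$. As written, your proof of \eqref{phi estimates 1} does not close.
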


\begin{proof}[Proof of Proposition \ref{delta estimates}]
Observe that the proofs of \eqref{delta estimates 2} and \eqref{delta estimates 3} are trivial. To prove \eqref{delta estimates 1}, instead, it is enough to show that $D^2\tau_{\delta}(\xi,\xi)\geq 0$ in $\Omega_{r_0}$ since this also implies that $\Delta\tau_{\delta}\geq 0$ in $\Omega_{r_0}$, simply choosing $\xi=e_i$ for all $i\in\{1,\ldots,N\}$. Now, we have that, for $x\in\Omega_{r_0}$
\begin{align}\label{dist bd}
	\delta(x)=|x-\textrm{pr}(x)|
\end{align}
where $\textrm{pr}(x)$ is the projection of $x$ on $\Gamma$. Hence \eqref{deriv delta 4} becomes
\begin{align*}
	D^2\tau_{\delta}(\xi,\xi) = 2\psi|\xi|^2 + 4\left(\xi\cdot\Big(x-\textrm{pr}(x)\Big)\right)(\xi\cdot\nabla\psi) + \delta^2D^2\psi(\xi,\xi), \;\;\;\forall\xi\in\RR^N.
\end{align*}
Now, using Cauchy-Scwarz inequality we obtain
\begin{align*}
	D^2\tau_{\delta}(\xi,\xi) &\geq (2\psi - 4\delta|D\psi|_{\infty} - \delta^2|D^2\psi|_{\infty})|\xi|^2 \geq (2\psi - r_0(4|D\psi|_{\infty} + |D^2\psi|_{\infty}))|\xi|^2 \geq 0.
\end{align*}
since $r_0$ satisfies \eqref{r_0}.
\end{proof}

\begin{proof}[Proof of Proposition \ref{phi estimates}]
First of all, we rewrite \eqref{deriv phi 4} as $D^2\tau_{\phi}(\xi,\xi)=\phi(1/r_0)^{\lambda}\,\mathcal{S}_{\phi}$, where 
\begin{align}\label{s phi}
	\nonumber\mathcal{S}_{\phi} &= \lambda(\lambda-1)\delta^{\lambda-2}(\xi\cdot\nabla\delta)^2 + \lambda\delta^{\lambda-1}D^2\delta(\xi,\xi) + 2\lambda^2\delta^{\lambda-1}(\xi\cdot\nabla\delta)(\xi\cdot\nabla\psi) 
	\\
	&+ \lambda\delta^{\lambda}D^2\psi(\xi,\xi) + \lambda^2\delta^{\lambda}(\xi\cdot\nabla\psi)^2.
\end{align}
Next, we have 
\begin{align*}
	|2\lambda^2\delta^{\lambda-1}(\xi\cdot\nabla\delta)(\xi\cdot\nabla\psi)|\leq a\lambda^2\delta^{\lambda-2}(\xi\cdot\nabla\delta)^2+\frac{\lambda^2}{a}\delta^{\lambda}(\xi\cdot\nabla\psi)^2, \;\;\forall a>0,
\end{align*}
which combined with \eqref{s phi} leads to
\begin{align*}
	\mathcal{S}_{\phi} \geq (\lambda^2-\lambda-a\lambda^2)\delta^{\lambda-2}(\xi\cdot\nabla\delta)^2 + \lambda\delta^{\lambda-1}D^2\delta(\xi,\xi) + \lambda\delta^{\lambda}D^2\psi(\xi,\xi) + \left(\lambda^2-\frac{\lambda^2}{a}\right)\delta^{\lambda}(\xi\cdot\nabla\psi)^2.
\end{align*}
Choosing now $a$ such that $\lambda^2(1-a)-\lambda=0$, i.e. $a=(\lambda-1)/\lambda$, we have
\begin{align}\label{s phi est}
	\mathcal{S}_{\phi} \geq \lambda\delta^{\lambda-1}D^2\delta(\xi,\xi) + \lambda\delta^{\lambda}D^2\psi(\xi,\xi) - -\frac{\lambda^2}{\lambda-1}\delta^{\lambda}|\nabla\psi|^2|\xi|^2.
\end{align}
Applying \eqref{s phi est} for $x\in\Omega_{r_0}$ we deduce
\begin{align*}
	\mathcal{S}_{\phi} & \geq \frac{\lambda}{2}\delta^{\lambda-2}|\xi|^2 + \lambda\delta^{\lambda-2}|\xi|^2\left(\frac{1}{2}-\frac{\lambda}{\lambda-1}\delta^2|D\psi|_{\infty}^2-\delta^2|D^2\psi|_{\infty}\right)
	\\
	& \geq \frac{\lambda}{2}\delta^{\lambda-2}|\xi|^2 + \lambda\delta^{\lambda-2}|\xi|^2\left(\frac{1}{2}-r_0^2\Big(2|D\psi|_{\infty}^2+|D^2\psi|_{\infty}\Big)\right) \geq \frac{\lambda}{2}\delta^{\lambda-2}|\xi|^2, 
\end{align*}
for $r_0$ as in \eqref{r_0}. This immediately yields the proof of \eqref{phi estimates 1}.
\\
\indent Let us now prove \eqref{phi estimates 2}. According to Lemma \ref{lemma_psi}, to the definition of $\psi$ and to \eqref{delta_distance} and \eqref{deriv phi 3} we get
\begin{align*}
	\Delta\tau_{\phi} & \geq \frac{\phi}{r_0^{\lambda}}\left(\lambda(\lambda-1-P)\delta^{\lambda-2}+2\lambda^2\delta^{\lambda-1}(\varpi\psi_1-\varpi\Do{\psi_1})+\lambda\delta^{\lambda}\Delta\psi+\lambda^2\delta^{\lambda}|\nabla\psi|^2\right)
	\\
	& \geq \lambda^2\left(\frac{\delta}{r_0}\right)^{\lambda}\phi\left(|\nabla\psi|^2-\frac{2\varpi\Do{\psi_1}}{r_0}-\varpi\frac{|\Delta\psi|}{\lambda}\right) \geq \lambda^2\left(\frac{\delta}{r_0}\right)^{\lambda}\phi\left(\varpi^2\varpi_0^2-\frac{2\varpi\Do{\psi_1}}{r_0}-\varpi\frac{|\Delta\psi|}{\lambda}\right)
	\\
	& \geq \lambda^2\left(\frac{\delta}{r_0}\right)^{\lambda}\phi
\end{align*}
for all $x\in\mathcal{O}$, if we take $\varpi$ as in \eqref{varpi} and $\lambda$ large enough.
\\
We conclude with the proof of \eqref{phi estimates 3}. From \eqref{deriv phi 4} for any $x\in\Omega$ we have
\begin{align*}
	D^2\tau_{\phi}(\xi,\xi) &= \frac{\phi}{r_0^{\lambda}}\left(\lambda^2\left(\delta^{\frac{\lambda}{2}-1}(\xi\cdot\nabla\delta)+\delta^{\frac{\lambda}{2}}(\xi\cdot\nabla\psi)\right)^2 + \lambda\delta^{\lambda-1}D^2\delta(\xi,\xi) + \lambda\delta^{\lambda}D^2\psi(\xi,\xi)-\lambda\delta^{\lambda-2}(\xi\cdot\nabla\delta)^2\right)
	\\
	& \geq \lambda\left(\frac{\delta}{r_0}\right)^{\lambda-2}\phi\left(\frac{1}{r_0^2}\left(\delta D^2\delta(\xi,\xi) + \delta^2D^2\psi(\xi,\xi)-(\xi\cdot\nabla\delta)^2\right)\right)
	\\
	& \geq -\lambda\left(\frac{\delta}{r_0}\right)^{\lambda-2}\phi\left(\frac{1}{r_0^2}\left(|D^2\delta|_{\infty} + R_{\Omega}^2|D^2\psi|_{\infty} + 1\right)\right)|\xi|^2,
\end{align*}
which gives us the validity of \eqref{phi estimates 3} for $C_3 = \left(|D^2\delta|_{\infty} + R_{\Omega}^2|D^2\psi|_{\infty} + 1\right)/r_0^2$.
\end{proof}

\subsection*{Bounds for $2D^2\tau(\nabla\tau,\nabla\tau)-\alpha\Delta\tau|\nabla\tau|^2$}

We provide here pointwise estimates for the quantity 
\begin{align*}
	2D^2\tau(\nabla\tau,\nabla\tau)-\alpha\Delta\tau|\nabla\tau|^2,
\end{align*}
which appears in the identity in Lemma \ref{scalar product lemma}.
\\
First of all, we have
\begin{align*}
	\partial_{x_i}\tau &= 2\psi\delta\delta_{x_i} + \delta^2\psi_{x_i} + \frac{\phi}{r_0^{\lambda}}(\lambda\delta^{\lambda-1}\delta_{x_i} + \lambda\delta^{\lambda}\psi_{x_i}),
	\\
	\partial_{x_ix_j}^2\tau &= 2\psi\delta_{x_i}\delta_{x_j} +  2\delta(\psi_{x_j}\delta_{x_i} + \psi\delta_{x_ix_j}) + 2\delta\psi_{x_i}\delta_{x_j} + \delta^2\psi_{x_ix_j} 
	\\
	&+ \frac{\phi}{r_0^{\lambda}}\Big(\lambda(\lambda-1)\delta^{\lambda-2}\delta_{x_i}\delta_{x_j} + \lambda\delta^{\lambda-1}\delta_{x_ix_j} + \lambda^2\delta^{\lambda-1}(\psi_{x_j}\delta_{x_i}+\psi_{x_i}\delta_{x_j}) + \lambda\delta^{\lambda}\psi_{x_ix_j} + \lambda^2\delta^{\lambda}\psi_{x_i}\psi_{x_j}\Big),
\end{align*}
and in consequence
\begin{align}
	\nonumber \Delta\tau = 2\psi + 4 & \delta(\nabla\delta\cdot\nabla\psi) + 2\psi\Delta\delta+\delta^2\Delta\psi 
	\\
	+ \frac{\phi}{r_0^{\lambda}} & \Big(\lambda(\lambda-1)\delta^{\lambda-2} + \lambda\delta^{\lambda-1}\Delta\delta + 2\lambda^2\delta^{\lambda-1}(\nabla\delta\cdot\nabla\psi) + \lambda\delta^{\lambda}\Delta\psi + \lambda^2\delta^{\lambda}|\nabla\psi|^2\Big), \label{delta tau}
	\\
	\nonumber D^2\tau(\nabla\tau,\nabla\tau) &= 2\psi(\nabla\tau\cdot\nabla\delta)^2 + 2\delta\psi D^2\delta(\nabla\tau,\nabla\tau) + 4\delta(\nabla\tau\cdot\nabla\delta)(\nabla\tau\cdot\nabla\psi) + \delta^2D^2\psi(\nabla\tau,\nabla\tau) 
	\\
	\nonumber &+ \frac{\phi}{r_0^{\lambda}}\Big(\lambda(\lambda-1)\delta^{\lambda-2}(\nabla\tau\cdot\nabla\delta)^2 + \lambda\delta^{\lambda-1}D^2\delta(\nabla\tau,\nabla\tau) + 2\lambda^2\delta^{\lambda-1}(\nabla\tau\cdot\nabla\delta)(\nabla\tau\cdot\nabla\psi) 
	\\
	& + \lambda\delta^{\lambda}D^2\psi(\nabla\tau,\nabla\tau) + \lambda^2\delta^{\lambda}(\nabla\tau\cdot\nabla\psi)^2\Big). \label{d tau}
\end{align}
Using the expressions above we obtain the following useful formulas 
\begin{align*}
	& (\nabla\delta\cdot\nabla\tau)^2 = |\nabla\tau|^2 + \left((\nabla\delta\cdot\nabla\psi)^2-|\nabla\psi|^2\right)\left(\delta^2+\lambda\frac{\phi}{r_0^{\lambda}}\delta^{\lambda}\right)^2,
	\\
	& (\nabla\delta\cdot\nabla\tau)(\nabla\psi\cdot\nabla\tau) = |\nabla\tau|^2(\nabla\delta\cdot\nabla\psi) 
	+ \left(|\nabla\psi|^2 - (\nabla\delta\cdot\nabla\psi)^2\right)\left(2\delta\psi+\lambda\frac{\phi}{r_0^{\lambda}}\delta^{\lambda-1}\right)\left(\delta^2+\lambda\frac{\phi}{r_0^{\lambda}}\delta^{\lambda}\right),
	\\
	& (\nabla\psi\cdot\nabla\tau)^2 = |\nabla\psi|^2|\nabla\tau|^2 + \left((\nabla\delta\cdot\nabla\psi)^2-|\nabla\psi|^2\right)\left(2\delta\psi+\lambda\frac{\phi}{r_0^{\lambda}}\delta^{\lambda-1}\right)^2,
\end{align*}
and we finally conclude
\begin{align*}
	2D^2\tau(\nabla\tau,\nabla\tau)-\alpha\Delta\tau|\nabla\tau|^2 = T_1+T_2+T_3,
\end{align*}
where
\begin{align}
	\nonumber T_1 &= 2\psi(2-\alpha)|\nabla\tau|^2 + 4\delta\psi D^2\delta(\nabla\tau,\nabla\tau) + 2\delta^2D^2\psi (\nabla\tau,\nabla\tau) + 4(2-\alpha)\delta(\nabla\delta\cdot\nabla\psi)|\nabla\tau|^2 
	\\
	&- 2\delta\psi\alpha\Delta\delta|\nabla\tau|^2 - \delta^2\alpha\Delta\psi|\nabla\tau|^2, \label{T1}
	\\
	\nonumber T_2 &= 4\left(|\nabla\psi|^2-(\nabla\delta\cdot\nabla\psi)^2\right)\left(\delta^2+\lambda\frac{\phi}{r_0^{\lambda}}\delta^{\lambda}\right)\left(5\delta^2\psi+\lambda(2-\psi)\frac{\phi}{r_0^{\lambda}}\delta^{\lambda}\right)
	\\
	&+ \frac{\phi}{r_0^{\lambda}}\left(|\nabla\psi|^2-(\nabla\delta\cdot\nabla\psi)^2\right)\left(2\lambda^3\delta^{3\lambda-2}\left(\frac{\phi}{r_0^{\lambda}}\right)^2+\lambda^2(8\psi(1-\psi)-2)\delta^{\lambda+2}+4\lambda^2\frac{\phi}{r_0^{\lambda}}\delta^{2\lambda}+2\lambda\delta^{\lambda+2}\right), \label{T2}
	\\
	\nonumber T_3 &= \frac{\phi}{r_0^{\lambda}}\left\{\left[(\lambda^2(2-\alpha)-\lambda(2-\alpha+\alpha\delta\Delta\delta))
	\delta^{\lambda-2}+2\lambda^2\delta^{\lambda-1}(2-\alpha)(\nabla\delta\cdot\nabla\psi)\right.\right.
	\\
	&+ \left.\left. \lambda^2\delta^{\lambda}(2-\alpha)|\nabla\psi|^2-\lambda\alpha\delta^{\lambda}\Delta\psi\right]|\nabla\tau|^2+2\lambda\delta^{\lambda-1}D^2\delta(\nabla\tau,\nabla\tau)+2\lambda\delta^{\lambda}D^2\psi(\nabla\tau,\nabla\tau)\right\}.\label{T3}
\end{align}

\begin{proposition}\label{T1 estimates}
For $r_0$ as in \eqref{r_0}, there exist two positive constants $D_1$ and $D_2$ depending on $\Omega$ and $\psi$ such that the term $T_1$ in \eqref{T1} satisfies
\begin{align}
	& T_1\geq|\nabla\tau|^2, & & \forall x\in\Omega_{r_0}, \label{T1 est 1}
	\\
	& T_1\geq -D_1|\nabla\tau|^2, & & \forall x\in\mathcal{O}, \label{T1 est 2}
	\\
	& |T_1|\leq D_2|\nabla\tau|^2, & & \forall x\in\omega_0. \label{T1 est 3}
\end{align}
\end{proposition}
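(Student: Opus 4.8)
My plan is to track, in each of the three regions separately, which terms of \eqref{T1} are leading and which are lower order. The guiding observation is that, apart from the first summand $2\psi(2-\alpha)|\nabla\tau|^2$, every term in \eqref{T1} carries at least one explicit power of the distance $\delta$. Since $\psi\ge 1$ on $\overline{\Omega}$ by \eqref{psi_prop}, $0\le\alpha\le 1$ by \eqref{alpha}, and the functions $\psi,\nabla\psi,D^2\psi,\Delta\psi$ together with $D^2\delta$ and $\Delta\delta$ are bounded in a one-sided neighbourhood of $\Gamma$ (this is where the $C^2$ regularity of $\Omega$ is used), I will estimate all the remaining terms by $C\delta\,|\nabla\tau|^2$ with $C=C(\Omega,\psi)$, after using $|\nabla\delta|=1$ a.e. (cf. \eqref{grad_distance}) and Cauchy--Schwarz to bound $\nabla\delta\cdot\nabla\psi$, $D^2\delta(\nabla\tau,\nabla\tau)$ and $D^2\psi(\nabla\tau,\nabla\tau)$ pointwise.

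For \eqref{T1 est 1} I would write, on $\Omega_{r_0}$, $T_1=c\,|\nabla\tau|^2+4\delta\psi\,D^2\delta(\nabla\tau,\nabla\tau)+2\delta^2 D^2\psi(\nabla\tau,\nabla\tau)$ with $c:=2\psi(2-\alpha)+4(2-\alpha)\delta(\nabla\delta\cdot\nabla\psi)-2\delta\psi\alpha\Delta\delta-\delta^2\alpha\Delta\psi$, and bound from below: $2-\alpha\ge 1$ and $\psi\ge 1$ give $2\psi(2-\alpha)\ge 2$, while the three $\delta$-weighted terms of $c$ together with the two Hessian terms are each $\ge -Cr_0\,|\nabla\tau|^2$ on $\Omega_{r_0}$. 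Taking $r_0$ small enough — which is precisely what (part of) the list \eqref{r_0} ensures — makes $Cr_0\le 1$ and yields $T_1\ge(2-1)|\nabla\tau|^2=|\nabla\tau|^2$. The one point needing care is the term $2\delta\psi\alpha\Delta\delta$: here I must not use the weak bound $|\Delta\delta|\le P/\delta$ of \eqref{delta_distance}, which would cancel the gain from the factor $\delta$, but rather the genuine boundedness of $\Delta\delta$ on the tubular neighbourhood $\Omega_{r_0}$, valid since $\Omega$ is $C^2$; alternatively, since $\alpha$ vanishes on $\Omega_{r_0/2}$, one may invoke a uniform bound for $\Delta\delta$ on $\{\delta\ge r_0/2\}$.

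For \eqref{T1 est 2}, on $\mathcal{O}$ one has $\alpha\equiv 1$ by \eqref{alpha} (since $\mathcal{O}\subset\Omega\setminus\Omega_{r_0}$), so $2-\alpha=1$; the first term $2\psi|\nabla\tau|^2$ is nonnegative since $\psi\ge 1$, and all the remaining terms are bounded in modulus by $C(\Omega,\psi)\,|\nabla\tau|^2$ because on $\mathcal{O}$ the distance $\delta$ stays between $r_0$ and the diameter $R_{\Omega}$ and the coefficients $\psi,\nabla\psi,D^2\psi,\Delta\psi,D^2\delta,\Delta\delta$ are bounded there; this gives $T_1\ge -D_1|\nabla\tau|^2$. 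For \eqref{T1 est 3}, the set $\omega_0$ is a compact subset of $\omega\subset\subset\Omega$, hence bounded away from $\Gamma$; on it $\delta$ and all the above coefficients are bounded, so every term of \eqref{T1} — the first one included — is $O(|\nabla\tau|^2)$, and $|T_1|\le D_2|\nabla\tau|^2$ follows by the triangle inequality.

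The argument is elementary; the only genuinely delicate point, and the one that contributes to the list of constraints \eqref{r_0}, is \eqref{T1 est 1}, where I must make sure that no correction term overcomes the $\delta$-gain and removes more than one full unit from the leading coefficient $2\psi(2-\alpha)$. The term to watch is the $\Delta\delta$-term, which is neutralised by the $C^2$-smoothness of $\Omega$ near $\Gamma$ rather than by the singular estimate \eqref{delta_distance}.
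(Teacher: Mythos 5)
Your argument is correct and is essentially the paper's own proof: the paper likewise dismisses \eqref{T1 est 2} and \eqref{T1 est 3} as obvious and, for \eqref{T1 est 1} on $\Omega_{r_0}$, keeps the leading coefficient $2\psi(2-\alpha)\geq 2$ while absorbing every remaining term into an $O(r_0)$ (in fact $O(r_0^2)$, via Lemma \ref{lemma_psi} and \eqref{dist bd} in place of your plain Cauchy--Schwarz bound on $\nabla\delta\cdot\nabla\psi$) multiple of $|\nabla\tau|^2$, which the constraints in \eqref{r_0} make at most $1$. One caveat: your ``alternative'' treatment of the $\Delta\delta$ term, using the support of $\alpha$ together with \eqref{delta_distance} on $\{\delta\geq r_0/2\}$, only yields $2\delta\psi\alpha|\Delta\delta|\leq 4P|\psi|_{\infty}$, a constant that does not shrink as $r_0\to 0$, so the absorption really does require your primary route, namely the genuine boundedness of $\Delta\delta$ on the $C^2$ tubular neighbourhood of $\Gamma$.
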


\begin{proposition}\label{T2 estimates}
There exists $\lambda_0$ large enough such that, for any $\lambda\geq\lambda_0$ and $r_0$ as in \eqref{r_0},  the term $T_2$ in \eqref{T2} satisfies
\begin{align}
	& T_2\geq -\frac{\phi}{r_0^{\lambda}}|D\psi|_{\infty}^2\left(D_3\lambda^2\psi^2+D_4\lambda^2\right)\delta^{\lambda+2}, & & \forall x\in\Omega_{r_0}, \label{T2 est 1}
	\\
	& T_2\geq 0, & & \forall x\in\tilde{\mathcal{O}}, \label{T2 est 2}
\end{align}
for some positive constants $D_3$and $D_4$ depending on ...
\end{proposition}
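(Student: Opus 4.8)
The plan is to reduce \eqref{T2 est 1}--\eqref{T2 est 2} to a single scalar inequality by factoring out a sign-definite quantity, and then to balance the top power of $\lambda$ against the lower ones. \emph{Factoring.} Both summands in \eqref{T2} are proportional to $K:=|\nabla\psi|^{2}-(\nabla\delta\cdot\nabla\psi)^{2}$, and by Cauchy--Schwarz together with \eqref{grad_distance} (so $|\nabla\delta|=1$ a.e.) one has $0\le K\le|\nabla\psi|^{2}\le|D\psi|_{\infty}^{2}$. Writing $T_{2}=K\,G$, where $G$ is the remaining bracket: if $K=0$ both bounds are trivial; if $G\ge0$ then $T_{2}\ge0$; and if $G<0$ then $T_{2}=KG\ge|D\psi|_{\infty}^{2}\,G$. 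Hence it suffices to show $G\ge0$ on $\tilde{\mathcal{O}}$ and $G\ge-(D_{3}\lambda^{2}\psi^{2}+D_{4}\lambda^{2})\,\frac{\phi}{r_{0}^{\lambda}}\delta^{\lambda+2}$ on $\Omega_{r_{0}}$.

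\emph{Normal form of $G$.} Expanding the two products of \eqref{T2} and collecting monomials in $\delta$ and $\phi/r_{0}^{\lambda}$, I would obtain
\begin{align*}
G &= 20\psi\delta^{4}+\bigl[10\lambda+16\lambda\psi-2\lambda^{2}(2\psi-1)^{2}\bigr]\frac{\phi}{r_{0}^{\lambda}}\delta^{\lambda+2}\\
&\quad +4\lambda^{2}(3-\psi)\frac{\phi^{2}}{r_{0}^{2\lambda}}\delta^{2\lambda}+2\lambda^{3}\frac{\phi^{3}}{r_{0}^{3\lambda}}\delta^{3\lambda-2}.
\end{align*}
The first and last blocks are $\ge0$; the only possibly negative contributions are $-2\lambda^{2}(2\psi-1)^{2}\frac{\phi}{r_{0}^{\lambda}}\delta^{\lambda+2}$ and the $\delta^{2\lambda}$-block when $\psi>3$. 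I will also use $(2\psi-1)^{2}\le4\psi^{2}+1$ and, by \eqref{psi_prop} (so $\psi\ge1$ on $\overline{\Omega}$), $3-\psi\ge-\psi$.

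\emph{The interior region $\Omega_{r_{0}}$.} Here $\delta<r_{0}\le1$ and the cubic block decays fastest as $\delta\to0$, so it cannot control the $\delta^{2\lambda}$-block on its own with a $\lambda$-independent constant; instead I interpolate $2\lambda=\tfrac12(\lambda+2)+\tfrac12(3\lambda-2)$, writing $\frac{\phi^{2}}{r_{0}^{2\lambda}}\delta^{2\lambda}=\bigl(\frac{\phi}{r_{0}^{\lambda}}\delta^{\lambda+2}\bigr)^{1/2}\bigl(\frac{\phi^{3}}{r_{0}^{3\lambda}}\delta^{3\lambda-2}\bigr)^{1/2}$ and applying the weighted Young inequality with the pointwise weight $\varepsilon=\lambda/\psi$. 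Using $3-\psi\ge-\psi$, the cubic part generated is exactly $-2\lambda^{3}\frac{\phi^{3}}{r_{0}^{3\lambda}}\delta^{3\lambda-2}$, which cancels the cubic block of $G$, while the remaining part is $-2\lambda\psi^{2}\frac{\phi}{r_{0}^{\lambda}}\delta^{\lambda+2}$. Discarding $20\psi\delta^{4}\ge0$ and $10\lambda+16\lambda\psi\ge0$ and using $(2\psi-1)^{2}\le4\psi^{2}+1$ and $\lambda\le\lambda^{2}$, this yields $G\ge-(10\lambda^{2}\psi^{2}+2\lambda^{2})\frac{\phi}{r_{0}^{\lambda}}\delta^{\lambda+2}$, i.e.\ \eqref{T2 est 1} with $D_{3}=10$, $D_{4}=2$, independent of $\lambda$ (as they must be, since $D_{3},D_{4}$ enter the admissibility condition \eqref{r_0} for $r_{0}$, which is fixed before $\lambda$).

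\emph{The region $\tilde{\mathcal{O}}$ and the main difficulty.} Here $\delta\ge r_{0}$, so $\frac{\phi}{r_{0}^{\lambda}}\delta^{\lambda}=e^{\lambda\psi}(\delta/r_{0})^{\lambda}\ge e^{\lambda}$ while $\delta\le R_{\Omega}$. I split the cubic block $2\lambda^{3}\frac{\phi^{3}}{r_{0}^{3\lambda}}\delta^{3\lambda-2}$ into two halves: the first dominates $2\lambda^{2}(2\psi-1)^{2}\frac{\phi}{r_{0}^{\lambda}}\delta^{\lambda+2}$ once $\lambda e^{2\lambda}\ge 2(2|\psi|_{\infty})^{2}R_{\Omega}^{4}$, the second dominates $4\lambda^{2}(\psi-3)^{+}\frac{\phi^{2}}{r_{0}^{2\lambda}}\delta^{2\lambda}$ once $\lambda e^{\lambda}\ge 4|\psi|_{\infty}R_{\Omega}^{2}$; both hold for $\lambda\ge\lambda_{0}(\psi,\Omega)$, and the surviving terms $20\psi\delta^{4}+(10\lambda+16\lambda\psi)\frac{\phi}{r_{0}^{\lambda}}\delta^{\lambda+2}$ are $\ge0$, giving \eqref{T2 est 2}. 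I expect the genuine obstacle to be the $\Omega_{r_{0}}$ estimate: near the boundary $\frac{\phi^{2}}{r_{0}^{2\lambda}}\delta^{2\lambda}$ is comparable neither to $\frac{\phi}{r_{0}^{\lambda}}\delta^{\lambda+2}$ with a constant independent of $\lambda$, nor to $\frac{\phi^{3}}{r_{0}^{3\lambda}}\delta^{3\lambda-2}$ uniformly, so the precise interpolation exponents together with the $\psi$-tuned weight $\varepsilon=\lambda/\psi$ are what force the bound into the prescribed shape $\lambda^{2}\psi^{2}+\lambda^{2}$ with $\lambda$-independent constants.
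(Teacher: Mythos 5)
Your proof is correct, and it is both more explicit and more complete than the argument the paper actually gives. The two proofs share only the first step: factoring out $K=|\nabla\psi|^{2}-(\nabla\delta\cdot\nabla\psi)^{2}\ge 0$ via Cauchy--Schwarz and $|\nabla\delta|=1$. From there the paper keeps $T_{2}$ in its original two-block product form, bounds the first block by a chain of crude inequalities ending in $-D_{8}\lambda^{2}\frac{\phi}{r_{0}^{\lambda}}\delta^{\lambda+2}$, and declares that \eqref{T2 est 1} "follows trivially" and that \eqref{T2 est 2} is "straightforward" because the $\lambda^{3}$-term dominates away from the boundary. You instead expand $G$ completely into monomials in $\delta$ and $\frac{\phi}{r_{0}^{\lambda}}\delta^{\lambda}$ (your normal form for $G$ is exactly right, including the identity $8\psi(1-\psi)-2-2(2\psi-1)= -2(2\psi-1)^{2}+\ldots$, i.e.\ the coefficient $10\lambda+16\lambda\psi-2\lambda^{2}(2\psi-1)^{2}$) and then absorb the possibly negative $\delta^{2\lambda}$-block into the positive cubic block by the interpolation $\frac{\phi^{2}}{r_{0}^{2\lambda}}\delta^{2\lambda}=\bigl(\frac{\phi}{r_{0}^{\lambda}}\delta^{\lambda+2}\bigr)^{1/2}\bigl(\frac{\phi^{3}}{r_{0}^{3\lambda}}\delta^{3\lambda-2}\bigr)^{1/2}$ with a $\psi$-tuned Young weight. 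This is the genuinely new ingredient, and it addresses a point the paper glosses over: on $\Omega_{r_{0}}$ the ratio $\frac{\phi}{r_{0}^{\lambda}}\delta^{\lambda-2}$ between the $\delta^{2\lambda}$- and $\delta^{\lambda+2}$-blocks is only bounded by $e^{\lambda|\psi|_{\infty}}/r_{0}^{2}$, so a direct comparison cannot yield a $\lambda$-independent constant; your interpolation sidesteps this and delivers explicit constants $D_{3}=10$, $D_{4}=2$, which is important since $D_{3},D_{4}$ feed back into the admissibility condition \eqref{r_0} and therefore must not depend on $\lambda$ or $r_{0}$. Your treatment of $\tilde{\mathcal{O}}$, splitting the cubic block in two and using $\phi\ge e^{\lambda}$, $(\delta/r_{0})^{\lambda}\ge 1$, $\delta\le R_{\Omega}$, is also correct. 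One purely notational remark: in the convention $ab\le\frac{\varepsilon}{2}a^{2}+\frac{1}{2\varepsilon}b^{2}$ the weight that produces exactly $-2\lambda^{3}\frac{\phi^{3}}{r_{0}^{3\lambda}}\delta^{3\lambda-2}$ on the cubic side and $-2\lambda\psi^{2}\frac{\phi}{r_{0}^{\lambda}}\delta^{\lambda+2}$ on the other is $\varepsilon=\psi/\lambda$ rather than $\lambda/\psi$; the outcome you state is the correct one, so nothing breaks.
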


\begin{proposition}\label{T3 estimates}
There exists $\lambda_0$ large enough such that, for any $\lambda\geq\lambda_0$ and $r_0$ and $\varpi$ as in \eqref{r_0} and \eqref{varpi},  the term $T_3$ in \eqref{T3} satisfies
\begin{align}
	& T_3\geq \lambda^2\left(\frac{\phi}{r_0^{\lambda}}\delta^{\lambda-2}+\left(\frac{\delta}{r_0}\right)^{\lambda}\phi\right)|\nabla\tau|^2, & & \forall x\in\Omega\setminus\overline{\omega_0}, \label{T3 est 1}
	\\
	& T_3\leq \lambda^2D_5\frac{\phi}{r_0^{\lambda}}\delta^{\lambda-2}|\nabla\tau|^2, & & \forall x\in\Omega, \label{T3 est 2}
\end{align}
for some positive constant $D_5$, not depending on $\lambda$.
\end{proposition}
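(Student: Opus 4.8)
The plan is to work directly from the explicit expression \eqref{T3} for $T_3$. Since $\phi/r_0^\lambda>0$, it suffices to estimate the bracketed factor, which I will regroup as follows: the three terms carrying the factor $(2-\alpha)$ and a power $\lambda^2$ — namely $\lambda^2(2-\alpha)\bigl(\delta^{\lambda-2}+2\delta^{\lambda-1}(\nabla\delta\cdot\nabla\psi)+\delta^\lambda|\nabla\psi|^2\bigr)$ — combine, thanks to $|\nabla\delta|=1$ (property \eqref{grad_distance}), into the perfect square $(2-\alpha)\lambda^2\delta^{\lambda-2}|\nabla\delta+\delta\nabla\psi|^2|\nabla\tau|^2\ge0$. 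All remaining contributions are either linear in $\lambda$ or are the two Hessian terms $2\lambda\delta^{\lambda-1}D^2\delta(\nabla\tau,\nabla\tau)$ and $2\lambda\delta^\lambda D^2\psi(\nabla\tau,\nabla\tau)$; I treat these as errors, using that $|\delta\Delta\delta|\le P$ from \eqref{delta_distance}, that $\delta\,D^2\delta$ is bounded on $\Omega$ (a consequence of the bound \eqref{delta estimates 2} on $D^2\tau_\delta$ together with $\psi\ge1$ and the boundedness of $\nabla\delta,\nabla\psi,D^2\psi$), and that $\psi\in C^4(\overline\Omega)$. For the lower bound \eqref{T3 est 1} I split $\Omega\setminus\overline{\omega_0}$ into the boundary strip $\Omega_{r_0}$ — which, for $r_0$ small, is disjoint from $\overline{\omega_0}$ — and the inner region $\mathcal O$.

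On $\Omega_{r_0}$ one has $\psi_1=\delta$ by \eqref{psi_1}, hence $\nabla\psi=\varpi\nabla\delta$ a.e., and the perfect square collapses to $(2-\alpha)\lambda^2\delta^{\lambda-2}(1+\varpi\delta)^2|\nabla\tau|^2$. Expanding $(1+\varpi\delta)^2=1+2\varpi\delta+\varpi^2\delta^2$ and using $\varpi\ge1$, $2-\alpha\ge1$ (from \eqref{varpi}, \eqref{alpha}), the constant $1$ reproduces the $\lambda^2\delta^{\lambda-2}$-term on the right of \eqref{T3 est 1}, the term $\varpi^2\delta^2\ge\delta^2$ reproduces the $\lambda^2\delta^{\lambda}$-term, and the surplus absorbs all the $O(\lambda)$ contributions — in particular $-\lambda(2-\alpha+\alpha\delta\Delta\delta)\delta^{\lambda-2}-\lambda\alpha\delta^\lambda\Delta\psi$ and the two Hessian errors, each of size $O(\lambda\delta^{\lambda-2}|\nabla\tau|^2)$ — once $\lambda\ge\lambda_0$; here one uses that $2-\alpha=2$ on $\Omega_{r_0/2}$, where $\alpha$ vanishes, to dominate the $\delta^{\lambda-2}$-type errors near $\partial\Omega$, and the lower bound $\delta\ge r_0/2$ on the transition strip $\{r_0/2\le\delta<r_0\}$.

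On $\mathcal O$ one has $\alpha=1$, $r_0\le\delta\le R_\Omega$, and $|\nabla\psi|=\varpi|\nabla\psi_1|\ge\varpi\varpi_0$ by \eqref{psi_1}. Expanding the perfect square and cancelling the common $\lambda^2\delta^{\lambda-2}|\nabla\tau|^2$, the claim reduces to $\tfrac{2}{\delta}(\nabla\delta\cdot\nabla\psi)+|\nabla\psi|^2\ge1+O(1/\lambda)$ on $\mathcal O$. The (possibly negative) cross term is bounded from below by Lemma \ref{lemma_psi} together with $\psi_1>r_0>0$: $\nabla\delta\cdot\nabla\psi\ge\varpi(\psi_1-\Do{\psi_1})\ge-\varpi\Do{\psi_1}$, so that the left-hand side is $\ge\varpi^2\varpi_0^2-\tfrac{2\varpi\Do{\psi_1}}{r_0}$, and this is $\ge2$ precisely because of the lower bound $\varpi\ge\tfrac{2}{\varpi_0^2}\bigl(1+\tfrac{2\Do{\psi_1}}{r_0}\bigr)$ built into \eqref{varpi}. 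Since on $\mathcal O$ the weights $\delta^{\lambda-2}$ and $\delta^\lambda$ are comparable, the $O(\lambda)$ terms and the Hessian errors are absorbed for $\lambda$ large. I expect this $\mathcal O$-estimate — more precisely, the balancing of $r_0$ in \eqref{r_0} against $\varpi$ in \eqref{varpi} needed to keep $|\nabla\psi|^2$ large enough to dominate the cross term — to be the main obstacle.

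Finally, the upper bound \eqref{T3 est 2} is the easy direction. After factoring $\tfrac{\phi}{r_0^\lambda}$, every term in the bracket of \eqref{T3} carries at least $\delta^{\lambda-2}$ — using $\delta^{\lambda-1}=\delta\,\delta^{\lambda-2}$, $\delta^\lambda=\delta^2\delta^{\lambda-2}$, and the boundedness of $\delta\Delta\delta$ and $\delta\,D^2\delta$ — so one bounds each coefficient by its supremum, the degree-two-in-$\lambda$ part being controlled by $2\lambda^2(1+R_\Omega|D\psi|_\infty)^2\tfrac{\phi}{r_0^\lambda}\delta^{\lambda-2}|\nabla\tau|^2$; enlarging $\lambda_0$ to swallow the lower-degree terms then gives \eqref{T3 est 2} with a constant $D_5$ independent of $\lambda$.
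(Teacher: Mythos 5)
Your proposal is correct and follows essentially the same route as the paper's proof: the same splitting into the boundary strip $\Omega_{r_0}$ and the inner region $\mathcal{O}$, the same use of Lemma \ref{lemma_psi} together with $\delta\geq r_0$ and the condition $\varpi\geq\frac{2}{\varpi_0^2}\bigl(1+\frac{2\Do{\psi_1}}{r_0}\bigr)$ from \eqref{varpi} to obtain $\varpi^2\varpi_0^2-\frac{2\varpi\Do{\psi_1}}{r_0}\geq 2$ on $\mathcal{O}$, the same absorption of the $O(\lambda)$ and Hessian terms for $\lambda$ large, and the same term-by-term bound for \eqref{T3 est 2}. The only differences are presentational — you package the $\lambda^2(2-\alpha)$-terms as the perfect square $\delta^{\lambda-2}|\nabla\delta+\delta\nabla\psi|^2$ and write out the $\Omega_{r_0}$ case explicitly (using $\nabla\psi=\varpi\nabla\delta$ there), which the paper instead delegates to \cite[Prop.~3.6]{cazacu2014controllability} — so this is the same argument, slightly more fully documented.
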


\begin{proposition}\label{grad tau estimates}
For any $r_0$ and $\varpi$ as in \eqref{r_0} and \eqref{varpi} it holds
\begin{align}
	& |\nabla\tau|^2 \geq \delta^2, & & \forall x\in\Omega_{r_0}, \label{grad tau est 1}
	\\
	& |\nabla\tau|^2 \geq \lambda^2\left(\frac{\delta}{r_0}\right)^{2\lambda}\phi^2, & & \forall x\in\mathcal{O}, \label{grad tau est 2}
	\\
	& |\nabla\tau|^2 \leq \lambda^2 D_6\left(\frac{\delta}{r_0}\right)^{2\lambda}\phi^2, & & \forall x\in\omega_0, \label{grad tau est 3}
\end{align}
where $D_6$ is a positive constant depending only on $\Omega$ and $\psi$.
\end{proposition}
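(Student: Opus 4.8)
The plan is to compute $\nabla\tau=\nabla\tau_{\delta}+\nabla\tau_{\phi}$ from \eqref{deriv delta 1} and \eqref{deriv phi 1} and then exploit, region by region, the algebraic structure of this gradient. Those formulas give $\nabla\tau_{\delta}=2\psi\delta\nabla\delta+\delta^2\nabla\psi$ and $\nabla\tau_{\phi}=\lambda\tau_{\phi}\bigl(\delta^{-1}\nabla\delta+\nabla\psi\bigr)$ (recall $\tau_{\phi}=(\delta/r_0)^{\lambda}\phi$), so
\begin{align*}
\nabla\tau=\Bigl(2\psi\delta+\frac{\lambda\tau_{\phi}}{\delta}\Bigr)\nabla\delta+\bigl(\delta^2+\lambda\tau_{\phi}\bigr)\nabla\psi=:a\,\nabla\delta+b\,\nabla\psi ,
\end{align*}
with $a,b>0$ on $\Omega$. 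Since $|\nabla\delta|=1$ a.e.\ by \eqref{grad_distance}, this yields the identity $|\nabla\tau|^2=a^2+2ab\,(\nabla\delta\cdot\nabla\psi)+b^2|\nabla\psi|^2$, which will be the common starting point below; note also that $|\nabla\tau_{\delta}|\leq C_0$ on $\Omega$ for a constant $C_0=C_0(\Omega,\psi)$.

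First I would prove \eqref{grad tau est 1}. On $\Omega_{r_0}$ one has $\psi_1=\delta$ by \eqref{psi_1}, hence $\nabla\psi=\varpi\nabla\psi_1=\varpi\nabla\delta$; therefore $\nabla\tau=(a+\varpi b)\nabla\delta$ is a positive multiple of $\nabla\delta$, so $|\nabla\tau|=a+\varpi b\geq a\geq 2\psi\delta\geq 2\delta$, using $\psi\geq 1$ from \eqref{psi_prop}. Squaring gives $|\nabla\tau|^2\geq 4\delta^2\geq\delta^2$.

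The heart of the matter is \eqref{grad tau est 2}. On $\mathcal{O}=\Omega\setminus(\overline{\omega_0}\cup\overline{\Omega_{r_0}})$ we have $\delta\geq r_0$, so $\tau_{\phi}\geq e^{\lambda}$ grows exponentially in $\lambda$, and $|\nabla\psi|\geq\varpi\varpi_0$ by the last line of \eqref{psi_1}. The idea is that for $\lambda$ large the term $\nabla\tau_{\phi}$ dominates: factoring out $\lambda\tau_{\phi}$ and estimating
\begin{align*}
\Bigl|\frac{\nabla\delta}{\delta}+\nabla\psi\Bigr|\geq|\nabla\psi|-\frac{1}{\delta}\geq\varpi\varpi_0-\frac{1}{r_0},
\end{align*}
one obtains $|\nabla\tau_{\phi}|\geq\lambda\tau_{\phi}\bigl(\varpi\varpi_0-1/r_0\bigr)$. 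The conditions \eqref{r_0} on $r_0$ and \eqref{varpi} on $\varpi$ (together with $\varpi\varpi_0>2C_{\Omega}$) are precisely what is needed to make $\varpi\varpi_0-1/r_0\geq 1+\varepsilon_0$ for a fixed $\varepsilon_0>0$; then $|\nabla\tau|\geq|\nabla\tau_{\phi}|-|\nabla\tau_{\delta}|\geq(1+\varepsilon_0)\lambda\tau_{\phi}-C_0\geq\lambda\tau_{\phi}$ for $\lambda$ large. Squaring and recalling $\tau_{\phi}=(\delta/r_0)^{\lambda}\phi$ gives \eqref{grad tau est 2}. Alternatively, the same bound can be read off the identity $|\nabla\tau|^2=a^2+2ab\,(\nabla\delta\cdot\nabla\psi)+b^2|\nabla\psi|^2$, after bounding $\nabla\delta\cdot\nabla\psi$ from below through Lemma \ref{lemma_psi} and the fact that $\psi_1>r_0$ on $\mathcal{O}$.

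Finally \eqref{grad tau est 3} is the easy direction. Since $\omega_0\subset\subset\omega\subset\Omega$, the quantities $\delta$, $1/\delta$, $\psi$ and $\nabla\psi$ are all bounded on $\omega_0$, so $|\delta^{-1}\nabla\delta+\nabla\psi|\leq C_1$ there for some $C_1=C_1(\Omega,\psi)$; hence $|\nabla\tau_{\phi}|\leq C_1\lambda\tau_{\phi}$, and since $\lambda\tau_{\phi}$ dominates the bounded quantity $|\nabla\tau_{\delta}|\leq C_0$ for $\lambda$ large, $|\nabla\tau|\leq 2C_1\lambda\tau_{\phi}$, which squared is \eqref{grad tau est 3} with $D_6=4C_1^2$ (using $(\delta/r_0)^{2\lambda}\phi^2=\tau_{\phi}^2$). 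I expect the estimate \eqref{grad tau est 2} on $\mathcal{O}$ to be the main obstacle: the cross term $2ab\,(\nabla\delta\cdot\nabla\psi)$ in $|\nabla\tau|^2$ may be negative, and closing the argument requires producing a constant strictly larger than $1$ in front of the dominant quantity $\lambda\tau_{\phi}$; this is exactly where the quantitative choices \eqref{r_0} of $r_0$, \eqref{varpi} of $\varpi$, and the control of $\nabla\delta\cdot\nabla\psi$ from Lemma \ref{lemma_psi} enter.
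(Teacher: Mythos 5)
Your overall strategy --- compute $\nabla\tau=\nabla\tau_{\delta}+\nabla\tau_{\phi}$ explicitly and run pointwise estimates region by region --- is the same as the paper's, and the proposal is essentially sound, but the execution differs at each step in ways worth recording. For \eqref{grad tau est 1} you exploit that $\psi_1=\delta$ on $\Omega_{r_0}$ forces $\nabla\psi=\varpi\nabla\delta$, so that $\nabla\tau$ is a positive multiple of $\nabla\delta$ and the bound $|\nabla\tau|\geq 2\psi\delta\geq 2\delta$ is immediate; the paper instead expands $|\nabla\tau|^2$ into scalar groups and shows each group is nonnegative via Cauchy--Schwarz and the smallness conditions in \eqref{r_0}. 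Your version is cleaner and does not even need $r_0$ small for this part. For \eqref{grad tau est 3} you supply an actual argument (boundedness of $1/\delta$ and $\nabla\psi$ on $\omega_0$, then absorbing $|\nabla\tau_{\delta}|\leq C_0$ into $\lambda\tau_{\phi}$ for $\lambda$ large), whereas the paper in effect omits this case; note only that your $C_1$, hence $D_6$, also depends on $\omega_0$ through $\inf_{\omega_0}\delta$.

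The one place where your route is genuinely weaker is \eqref{grad tau est 2}. You control the cross term by the crude reverse triangle inequality $|\delta^{-1}\nabla\delta+\nabla\psi|\geq|\nabla\psi|-1/\delta\geq\varpi\varpi_0-1/r_0$ and then assert that \eqref{r_0} and \eqref{varpi} make this quantity at least $1+\varepsilon_0$. That is not automatic: \eqref{varpi} only yields $\varpi\varpi_0\geq\frac{2}{\varpi_0}\left(1+\frac{2\Do{\psi_1}}{r_0}\right)$, so beating $1+1/r_0$ reduces to $4\Do{\psi_1}\geq\varpi_0$, which needs a separate constant chase (for instance: $r_0\leq 1/(|D\psi|_{\infty}^2+2|D\psi|_{\infty})\leq 1/8$ since $|D\psi|_{\infty}\geq\varpi\varpi_0\geq 2$, and on $\Omega_{r_0}$ one has $\nabla\delta\cdot\nabla\psi_1-\psi_1=1-\delta$, whence $\Do{\psi_1}\geq 1-r_0\geq 7/8\geq\varpi_0/4$). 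The paper avoids this entirely by invoking Lemma \ref{lemma_psi}: on $\tilde{\mathcal{O}}$ it uses $\nabla\delta\cdot\nabla\psi\geq\varpi\psi_1-\varpi\Do{\psi_1}\geq-\varpi\Do{\psi_1}$, a far better lower bound on the cross term than $-|\nabla\psi|$, and it is precisely this bound that the entries $\varpi\geq\frac{2}{\varpi_0^2}\left(1+\frac{2\Do{\psi_1}}{r_0}\right)$ and $\varpi\geq\frac{24\Do{\psi_1}R_{\Omega}}{\varpi_0^2}$ of \eqref{varpi} were tailored to absorb. So either supply the missing verification that $\varpi\varpi_0 r_0>1+r_0$ under \eqref{r_0}--\eqref{varpi}, or replace the triangle inequality by the estimate of Lemma \ref{lemma_psi} as in the paper.
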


\begin{proof}[Proof of Proposition \ref{T1 estimates}]
The inequalities \eqref{T1 est 2} and \eqref{T1 est 3} are obvious. Hence, we only need to prove \eqref{T1 est 1}. Due to the definition of $\alpha$, to the properties of $\psi$ and to Lemma \ref{lemma_psi}, and using \eqref{dist bd}, we have (see also \cite[Prop. 3.4]{cazacu2014controllability})
\begin{align*}
	T_1 & \geq \left(2 - r_0^2(8\varpi\Do{\psi_1} + 3|D^2\psi|_{\infty})\right)|\nabla\tau|^2 \geq \left(2 - r_0^2\left(8\frac{\Do{\psi_1}}{\varpi_0}|D\psi|_{\infty} + 3|D^2\psi|_{\infty}\right)\right)|\nabla\tau|^2 \geq |\nabla\tau|^2,
\end{align*}
in $\Omega_{r_0}$, for $r_0$ as in \eqref{r_0}.
\end{proof}

\begin{proof}[Proof of Proposition \ref{T2 estimates}]
Due to Cuachy-Scwarz inequality, the term $|\nabla\psi|^2-(\nabla\delta\cdot\nabla\psi)^2$ in \eqref{T2} is positive; hence
\begin{align*}
	4\left(|\nabla\psi|^2-(\nabla\delta\cdot\nabla\psi)^2\right)\left(\delta^2+\lambda\frac{\phi}{r_0^{\lambda}}\delta^{\lambda}\right)\left(5\delta^2\psi+\lambda(2-\psi)\frac{\phi}{r_0^{\lambda}}\delta^{\lambda}\right) \geq 4 D_7\delta^2\left(5\delta^2\psi+\lambda(2-\psi)\frac{\phi}{r_0^{\lambda}}\delta^{\lambda}\right)
	\\
	\geq -4 D_7\lambda\psi\frac{\phi}{r_0^{\lambda}}\delta^{\lambda+2} \geq - D_8\lambda^2\frac{\phi}{r_0^{\lambda}}\delta^{\lambda+2}
\end{align*}
for $\lambda$ large enough. From this \eqref{T2 est 1} follows trivially.
\\
\indent Concerning \eqref{T2 est 2}, it is straightforward to check that the inequality holds for $\lambda$ large enough, since the term in $\lambda^3$ is positive and it dominates all the other terms far away from the boundary.
\end{proof}

\begin{proof}[Proof of Proposition \ref{T3 estimates}]
For $x\in\Omega_{r_0}$, due to \eqref{dist bd}, the proof is analogous to the one of \cite[Prop. 3.6]{cazacu2014controllability} and we omit it here. Therefore, let us assume now $x\in\tilde{\mathcal{O}}$. Due to the definition of $\alpha$, for $\lambda$ large enough we have
\begin{align*}
	 \lambda^2(2-\alpha)-\lambda(2-\alpha-\alpha\delta\Delta\delta)\geq\lambda^2.
\end{align*}
Hence, from Lemma \ref{lemma_psi} and from the properties of $\psi$, for $x\in\Omega\setminus\overline{\omega_0}$ we have
\begin{align*}
	T_3 & \geq \frac{\phi}{r_0^{\lambda}}\left(\lambda^2\delta^{\lambda-2} + 2\lambda^2\delta^{\lambda-1}(2-\alpha)(\varpi\psi_1-\varpi\Do{\psi_1}) + \lambda^2\delta^{\lambda}(2-\alpha)|\nabla\psi|^2 - \lambda\alpha\delta^{\lambda}|D^2\psi|_{\infty} \right.
	\\
	&\left. - 2\lambda\delta^{\lambda-2}|D^2\delta|_{\infty} - 2\lambda\delta^{\lambda}|D^2\psi|_{\infty} \right)|\nabla\tau|^2
	\\
	& \geq \lambda^2\frac{\phi}{r_0^{\lambda}}\delta^{\lambda-2}|\nabla\tau|^2 + \lambda^2\frac{\phi}{r_0^{\lambda}}\delta^{\lambda}\left(\varpi^2|\nabla\psi_1|^2 - \frac{2\varpi\Do{\psi_1}}{\delta} - \frac{2+\alpha}{\lambda}|D^2\psi|_{\infty} - 2\frac{|D^2\delta|_{\infty}}{\delta^2\lambda}\right)|\nabla\tau|^2
	\\
	& \geq \lambda^2\frac{\phi}{r_0^{\lambda}}\delta^{\lambda-2}|\nabla\tau|^2 + \lambda^2\frac{\phi}{r_0^{\lambda}}\delta^{\lambda}\left(\varpi^2|\nabla\psi_1|^2 - \frac{2\varpi\Do{\psi_1}}{r_0} - \frac{2+\alpha}{\lambda}|D^2\psi|_{\infty} - 2\frac{|D^2\delta|_{\infty}}{r_0^2\lambda}\right)|\nabla\tau|^2
	\\
	& \geq \lambda^2\frac{\phi}{r_0^{\lambda}}\delta^{\lambda-2}|\nabla\tau|^2 + \lambda^2\frac{\phi}{r_0^{\lambda}}\delta^{\lambda}\left(\frac{\varpi^2\varpi_0^2}{2} - \frac{2\varpi\Do{\psi_1}}{r_0}\right)|\nabla\tau|^2 \geq \lambda^2\frac{\phi}{r_0^{\lambda}}\delta^{\lambda-2}|\nabla\tau|^2 + \lambda^2\frac{\phi}{r_0^{\lambda}}\delta^{\lambda}|\nabla\tau|^2,
\end{align*}
for $\lambda$ large enough and $\varpi$ as in \eqref{varpi}. Concerning \eqref{T3 est 2}, once again the proof is trivial and we omit it here.
\end{proof}

\begin{proof}[Proof of Proposition \ref{grad tau estimates}]
We have
\begin{align}\label{grad tau}
	\nonumber |\nabla\tau|^2 &= 4\delta^2\psi^2 + \delta^4|\nabla\psi|^2 + 4\delta^3(\nabla\delta\cdot\nabla\psi) + \lambda^2\left(\frac{\phi}{r_0^{\lambda}}\right)^2\left(\delta^{2\lambda-2} + \delta^{2\lambda}|\nabla\psi|^2 + 2\delta^{2\lambda-1}(\nabla\delta\cdot\nabla\psi)\right) 
	\\
	&+ \lambda\frac{\phi}{r_0^{\lambda}}\left(2\delta^{2+\lambda}|\nabla\psi|^2 + 4\delta^{\lambda}\psi + 2(1+2\psi)\delta^{1+\lambda}(\nabla\delta\cdot\nabla\psi)\right)
\end{align}
Now we observe that, for $r_0$ as in \eqref{r_0}, we have
\begin{align*}
	3\delta^2\psi^2+4\delta^3(\nabla\delta\cdot\nabla\psi) \geq \delta^2(3\psi^2-4\delta|\nabla\psi|) \geq \delta^2(3\psi^2-4r_0|\nabla\psi|) \geq 0,
\end{align*}
\begin{align*}
	2\delta^{2+\lambda}|\nabla\psi|^2 + 4\delta^{\lambda}\psi + 2(1+2\psi)\delta^{1+\lambda}(\nabla\delta\cdot\nabla\psi) \geq 2\delta^{\lambda}\left(2\psi-\delta^2|\nabla\psi|^2-(1+2\psi)\delta(\nabla\delta\cdot\nabla\psi)\right)
	\\
	\geq 2\delta^{\lambda}\left(2\psi-r_0\left(|\nabla\psi|^2+(1+2\psi)|\nabla\psi|\right)\right) \geq 0
\end{align*}
and
\begin{align*}
	\delta^{2\lambda-2} + \delta^{2\lambda}|\nabla\psi|^2 + 2\delta^{2\lambda-1}(\nabla\delta\cdot\nabla\psi) = \delta^{2\lambda-2}\left(1 + \delta^2|\nabla\psi|^2 + 2\delta(\nabla\delta\cdot\nabla\psi)\right)
	\\
	\geq \delta^{2\lambda-2}\left(1 - \delta^2|\nabla\psi|^2 - 2\delta|\nabla\psi|\right) \geq \delta^{2\lambda-2}\left(1 - r_0\left(|\nabla\psi|^2 + 2|\nabla\psi|\right)\right)\geq 0.
\end{align*}
Therefore, \eqref{grad tau est 1} immediately follows. 
\\
\indent Let us now prove \eqref{grad tau est 2}. Firstly, we observe that, thanks to Lemma \ref{lemma_psi} and to the properties of $\psi$, we get
\begin{align*}
	\delta^{2\lambda-2} &+ \delta^{2\lambda}|\nabla\psi|^2 + 2\delta^{2\lambda-1}(\nabla\delta\cdot\nabla\psi) \geq \delta^{2\lambda}\left(|\nabla\psi|^2 + \frac{2}{\delta}(\nabla\delta\cdot\nabla\psi)\right) \geq \delta^{2\lambda}\left(\varpi^2\varpi_0^2 - \frac{2\varpi\Do{\psi_1}}{r_0}\right) 
	\\
	&\geq \frac{\varpi^2\varpi_0^2}{2}\delta^{2\lambda}, 
\end{align*}
for all $x\in\tilde{\mathcal{O}}$ and for $\varpi$ as in \eqref{varpi}. Moreover,
\begin{align*}
	2\delta^{2+\lambda}|\nabla\psi|^2 + 4\delta^{\lambda}\psi + 2(1+2\psi)\delta^{1+\lambda}(\nabla\delta\cdot\nabla\psi) \geq -2(1+2\psi)\varpi\Do{\psi_1}\delta^{\lambda+1};
\end{align*}
hence
\begin{align*}
	|\nabla\tau|^2 & \geq \lambda^2\frac{\varpi^2\varpi_0^2}{2}\left(\frac{\delta}{r_0}\right)^{2\lambda}\phi^2 - 2(1+2\psi)\varpi\Do{\psi_1}R_{\Omega}\left(\frac{\delta}{r_0}\right)^{\lambda}\phi.
\end{align*}
Now, since by definition $\lambda\psi\leq\phi$,
\begin{align*}
		\lambda^2 & \frac{\varpi^2\varpi_0^2}{4}\left(\frac{\delta}{r_0}\right)^{2\lambda}\phi^2 - 2(1+2\psi)\varpi\Do{\psi_1}R_{\Omega}\left(\frac{\delta}{r_0}\right)^{\lambda}\phi 
		\\
		&= \frac{\varpi^2\varpi_0^2}{4}\left(\frac{\delta}{r_0}\right)^{2\lambda}\phi^2\left(\lambda^2 - \lambda\frac{8(1+2\psi)\varpi\Do{\psi_1}R_{\Omega}}{\varpi^2\varpi_0^2}\left(\frac{r_0}{\delta}\right)^{\lambda}\frac{1}{\phi}\right) 
		\\
		&\geq \frac{\varpi^2\varpi_0^2}{4}\left(\frac{\delta}{r_0}\right)^{2\lambda}\phi^2\left(\lambda^2 - \lambda\frac{24\psi\varpi\Do{\psi_1}R_{\Omega}}{\varpi^2\varpi_0^2}\frac{1}{\phi}\right) \geq \frac{\varpi^2\varpi_0^2}{2}\left(\frac{\delta}{r_0}\right)^{2\lambda}\phi^2\left(\lambda^2 - \frac{\lambda\psi}{\phi}\right) 
		\\
		&\geq \frac{\varpi^2\varpi_0^2}{2}\left(\frac{\delta}{r_0}\right)^{2\lambda}\phi^2\left(\lambda^2 - 1\right) 
\end{align*}
for $\varpi$ as in \eqref{varpi}. Therefore we can conclude
\begin{align*}
	|\nabla\tau|^2 & \geq \lambda^2\frac{\varpi^2\varpi_0^2}{4}\left(\frac{\delta}{r_0}\right)^{2\lambda}\phi^2,
\end{align*}
which implies \eqref{grad tau est 3}, again for $\varpi$ as in \eqref{varpi}.
\end{proof}

\subsection{Proof of the lemmas from Section \ref{carleman_proof}}
\begin{proof}[Proof of Lemma \ref{scalar product lemma}]
To simplify the presentation, we define
\begin{align*}
	&\mathrm{S}_1:=\Delta z, & &\mathrm{S}_2:=\frac{\mu}{\delta^2}z, & &\mathrm{S}_3:=\left(R\sigma_t+R^2|\nabla\sigma|^2\right)z,
	\\
	&\mathrm{A}_1:=z_t, & &\mathrm{A}_2:=2R\,\nabla\sigma\cdot\nabla z, & &\mathrm{A}_3:=R\Delta\sigma(1+\alpha)z,
\end{align*}
and we denote by $I_{i,j}$, $i,j=1,2,3$, the scalar product $\langle \mathrm{S}_i,\mathrm{A}_j\rangle$. We compute each term separately. Moreover, the computations for $I_{1,j}$ and $I_{3,j}$, $j=1,2,3$, are the same as in \cite[Lemma 2.4]{ervedoza2008control} and we will omit them here. 
\paragraph*{Computations for $I_{2,1}$}
Due to the boundary conditions \eqref{bc time}, we immediately have
\begin{align*}
	I_{2,1} = \frac{\mu}{2}\int_Q \frac{\partial_t(z^2)}{\delta^2}\,dxdt = \frac{\mu}{2}\left.\int_{\Omega}\frac{z^2}{\delta^2}\right|_0^T\,dx - \frac{\mu}{2}\int_Q z^2\partial_t\left(\frac{1}{\delta^2}\right)\,dxdt = 0.
\end{align*}
\paragraph*{Computations for $I_{2,2}$}
Applying integration by parts and \eqref{heat z bc} we have
\begin{align*}
	I_{2,2} = R\mu\int_Q\frac{1}{\delta^2}\left(\nabla\sigma\cdot\nabla(z^2)\right)\,dxdt = -R\mu\int_Q \Delta\sigma\frac{z^2}{\delta^2}\,dxdt + 2R\mu\int_Q (\nabla\delta\cdot\nabla\sigma)\frac{z^2}{\delta^3}\,dxdt.
\end{align*}
\paragraph*{Computations for $I_{2,3}$}
\begin{align*}
	I_{2,3} = R\mu\int_Q\Delta\sigma(1+\alpha)\frac{z^2}{\delta^2}\,dxdt.
\end{align*}
Identity \eqref{scalar product} follows immediately
\end{proof}

\begin{proof}[Proof of Lemma \ref{I_bd lemma}]
It is sufficient to prove that $\nabla\sigma\cdot n = 0$ for all $(x,t)\in\Gamma\times(0,T)$ and $\lambda>1$. First of all, we have
\begin{align*}
	\nabla\sigma = \theta\left(-2\delta\psi\nabla\delta - \delta^2\nabla\psi - \frac{\lambda}{r_0^{\lambda}}\left(\delta^{\lambda-1}\nabla\delta + \delta^{\lambda}\nabla\psi\right)\phi\right).
\end{align*}
\indent Moreover, because of the assumptions we made on the function $\psi$, for any $x\in\Gamma$ we have $\nabla\psi\cdot n=-|\nabla\psi|$; furthermore, it is a classical property of the distance function that $\nabla\delta\cdot n=-1$. Therefore,
\begin{align*}
	\nabla\sigma\cdot n &= \theta\left(-2\delta\psi(\nabla\delta\cdot n) + \delta^2|\nabla\psi| - \frac{\lambda}{r_0^{\lambda}}\left(\delta^{\lambda-1}\nabla\delta\cdot n - \delta^{\lambda}|\nabla\psi|\right)\phi\right)
	\\
	&= \theta\left(2\delta + \delta^2|\nabla\psi| + \frac{\lambda}{r_0^{\lambda}}\delta^{\lambda-1}\Big(1 + \delta|\nabla\psi|\Big)\,\phi\right).
\end{align*}
It is thus evident that, for any $\lambda>1$, $\nabla\sigma\cdot n = 0$ on $\Gamma\times(0,T)$.
\end{proof}

\begin{proof}[Proof of Lemma \ref{I_l lemma}]
We split $I_l$ in two parts, $I_l=I_l^1+I_l^2$, where
\begin{align}
	I_l^1 = & -2R\int_Q D^2\sigma(\nabla z,\nabla z)\,dxdt - R\int_Q \alpha\Delta\sigma|\nabla z|^2\,dxdt + 2R\mu\int_Q (\nabla\delta\cdot\nabla\sigma)\frac{z^2}{\delta^3}\,dxdt, \label{I_l 1}
	\\
	\nonumber I_l^2 = & -\frac{R}{2}\int_Q \Delta^2\sigma(1+\alpha)z^2\,dxdt + R\int_Q \left(\nabla(\Delta\sigma)\cdot\nabla\alpha\right)z^2\,dxdt + \frac{R}{2}\int_Q \Delta\sigma\Delta\alpha z^2\,dxdt
	\\
	&+ R\mu\int_Q \alpha\Delta\sigma\frac{z^2}{\delta^2}\,dxdt. \label{I_l 2}
\end{align}
Moreover, we also split $I_l^1=I_{l,\delta}^1+I_{l,\phi}^1$ where
\begin{align}
	I_{l,\delta}^1 &= -2R\int_Q D^2\sigma_{\delta}(\nabla z,\nabla z)\,dxdt - R\int_Q \alpha\Delta\sigma_{\delta}|\nabla z|^2\,dxdt + 2R\mu\int_Q (\nabla\delta\cdot\nabla\sigma_{\delta})\frac{z^2}{\delta^3}\,dxdt, \label{I_l 1 delta}
	\\
	I_{l,\phi}^1 &= -2R\int_Q D^2\sigma_{\phi}(\nabla z,\nabla z)\,dxdt - R\int_Q \alpha\Delta\sigma_{\phi}|\nabla z|^2\,dxdt + 2R\mu\int_Q (\nabla\delta\cdot\nabla\sigma_{\phi})\frac{z^2}{\delta^3}\,dxdt. \label{I_l 1 phi}
\end{align}

\paragraph*{Estimates for $I_{l,\delta}^1$}
From \eqref{deriv delta 3} and \eqref{deriv delta 4} we have 
\begin{align*}
	I_{l,\delta}^1 &= 4R\int_Q \theta\psi(\nabla\delta\cdot\nabla z)^2\,dxdt + 4R\int_Q \theta\psi\delta D^2\delta(\nabla z,\nabla z)\,dxdt 
	\\
	&+ 8R\int_Q \theta\delta(\nabla\delta\cdot\nabla z)(\nabla\psi\cdot\nabla z)\,dxdt + R\int_Q \theta\delta^2 D^2\psi(\nabla z,\nabla z)\,dxdt - R\int_Q \alpha\Delta\sigma_{\delta}|\nabla z|^2\,dxdt 
	\\
	&- 4R\mu\int_Q \theta\psi\frac{z^2}{\delta^2}\,dxdt - 2R\mu\int_Q \theta(\nabla\delta\cdot\nabla\psi)\frac{z^2}{\delta}\,dxdt.
\end{align*}
Hence
\begin{align*}
	I_{l,\delta}^1 &\geq -4R\int_Q \theta\psi(\nabla\delta\cdot\nabla z)^2\,dxdt + 4R\int_Q \theta\psi\delta D^2\delta(\nabla z,\nabla z)\,dxdt 
	\\
	&+ 8R\int_Q \theta\delta(\nabla\delta\cdot\nabla z)(\nabla\psi\cdot\nabla z)\,dxdt + R\int_Q \theta\delta^2 D^2\psi(\nabla z,\nabla z)\,dxdt - R\int_Q \alpha\Delta\sigma_{\delta}|\nabla z|^2\,dxdt 
	\\
	&- 4R\mu\int_Q \theta\psi\frac{z^2}{\delta^2}\,dxdt - 2R\mu\int_Q \theta(\nabla\delta\cdot\nabla\psi)\frac{z^2}{\delta}\,dxdt
	\\
	&\geq 4R\int_Q \theta\psi\left(|\nabla z|^2-\mu\frac{z^2}{\delta^2}\right)\,dxdt - 8R\int_Q \theta\psi|\nabla z|^2\,dxdt + 4R\int_Q \theta\psi\delta D^2\delta(\nabla z,\nabla z)\,dxdt 
	\\
	&+ 8R\int_Q \theta\delta(\nabla\delta\cdot\nabla z)(\nabla\psi\cdot\nabla z)\,dxdt + R\int_Q \theta\delta^2 D^2\psi(\nabla z,\nabla z)\,dxdt - R\int_Q \alpha\Delta\sigma_{\delta}|\nabla z|^2\,dxdt 
	\\
	&- 2R\mu\int_Q \theta(\nabla\delta\cdot\nabla\psi)\frac{z^2}{\delta}\,dxdt.
\end{align*}
Therefore,
\begin{align*}
	I_{l,\delta}^1 &\geq 4R\int_Q \theta\psi\left(|\nabla z|^2-\mu\frac{z^2}{\delta^2}\right)\,dxdt - 8R\int_Q \theta\psi|\nabla z|^2\,dxdt - 4R|D^2\delta|_{\infty}\int_Q \theta\psi|\nabla z|^2\,dxdt 
	\\
	&- 8R|D\psi|_{\infty}R_{\Omega}\int_Q \theta|\nabla z|^2\,dxdt - R|D^2\psi|_{\infty}R_{\Omega}^2\int_Q \theta|\nabla z|^2\,dxdt - R\int_Q \alpha\Delta\sigma_{\delta}|\nabla z|^2\,dxdt 
	\\
	&- 2R\mu\int_Q \theta(\nabla\delta\cdot\nabla\psi)\frac{z^2}{\delta}\,dxdt
	\\
	&\geq 4R\int_Q \theta\psi\left(|\nabla z|^2-\mu\frac{z^2}{\delta^2}\right)\,dxdt - RM_1\int_Q \theta|\nabla z|^2\,dxdt - R\int_Q \alpha\Delta\sigma_{\delta}|\nabla z|^2\,dxdt 
	\\
	&- 2R\mu\int_Q \theta(\nabla\delta\cdot\nabla\psi)\frac{z^2}{\delta}\,dxdt.
\end{align*}
where $M_1=M_1(\mu,\psi,\Omega)$ is a positive constant.
\\
\indent Next, we estimate the first term in the expression above applying the Hardy-Poincar\'e inequality \eqref{hardy 3}. First of all, by integration by parts we obtain the identities
\begin{align*}
	& \int_{\Omega} z(\nabla\psi\cdot\nabla z)\,dx = -\frac{1}{2}\int_{\Omega} z^2\Delta\psi\,dx
	\\
	\\
	& \int_{\Omega} \delta^{2-\gamma}z(\nabla\psi\cdot\nabla z)\,dx = -\frac{1}{2}\int_{\Omega} \delta^{2-\gamma}\Delta\psi z^2\,dx - \frac{2-\gamma}{2}\int_{\Omega} \delta^{1-\gamma}(\nabla\delta\cdot\nabla\psi)\,dx.
\end{align*}
Secondly, we apply \eqref{hardy 3} for $u:=z\sqrt{\psi}$ and, after integrating in time, we get
\begin{align*}
	A_4 & \int_Q \theta\psi z^2\,dxdx + \int_Q \theta\psi\left(|\nabla z|^2-\mu\frac{z^2}{\delta^2}\right)\,dxdt + \frac{1}{4}\int_Q \theta\frac{|\nabla\psi|^2}{\psi}z^2\,dxdt - \frac{1}{2}\int_Q \theta z^2\Delta\psi\,dxdt
	\\
	& \geq A_5\int_Q \theta\psi\left(\delta^{2-\gamma}|\nabla z|^2+A_1\frac{z^2}{\delta^{\gamma}}\right)\,dxdt + \frac{A_5}{4}\int_Q \theta\delta^{2-\gamma}\frac{|\nabla\psi|^2}{\psi}z^2\,dxdt - \frac{A_5}{2}\int_Q \theta\delta^{2-\gamma}z^2\Delta\psi\,dxdt 
	\\
	&- A_5\frac{2-\gamma}{2}\int_Q \theta\delta^{1-\gamma}(\nabla\delta\cdot\nabla\psi)z^2\,dxdt,
\end{align*}
where $A_5$ and $A_5$ are the constants of Proposition \ref{hardy 3 prop}. Now, for $r_0$ as in \eqref{r_0} we have
\begin{align*}
	\frac{A_5\psi}{4\delta^{\gamma}}\geq\frac{A_5}{2}(2-\gamma)\delta^{1-\gamma}|D\psi|_{\infty}, \;\;\; \forall x\in\Omega_{r_0};
\end{align*}
therefore,
\begin{align*}
	\frac{A_5}{2}\int_Q \theta\psi & \left(\delta^{2-\gamma}|\nabla z|^2+\frac{1}{2}\frac{z^2}{\delta^{\gamma}}\right)\,dxdt - A_5\frac{2-\gamma}{2}\int_Q \theta\delta^{1-\gamma}(\nabla\delta\cdot\nabla\psi)z^2\,dxdt 
	\\
	& \geq -\frac{A_5}{2}(2-\gamma)|D\psi|_{\infty}\left|\,\sup_{\delta > r_0}\delta^{1-\gamma}\,\right|\intr{\tilde{\mathcal{O}}\times (0,T)} \theta z^2\,dxdt; 
\end{align*}
combing the two expressions above, we finally obtain
\begin{align*}
	\int_Q \theta\psi\left(|\nabla z|^2-\mu^*\frac{z^2}{\delta^2}\right)\,dxdt & \geq \frac{A_5}{2}\int_Q \theta\psi\left(\delta^{2-\gamma}|\nabla z|^2+\frac{1}{2}\frac{z^2}{\delta^{\gamma}}\right)\,dxdt - A_6\int_Q \theta z^2\,dxdx, 
\end{align*}
where
\begin{align*}
	A_6:= \frac{A_5}{4}\left(R_{\Omega}^{2-\gamma}|D\psi|_{\infty}^2+2R_{\Omega}^{2-\gamma}+2(2-\gamma)|D\psi|_{\infty}\left|\sup_{\delta > r_0}\delta^{1-\gamma}\right|\,\right). 
\end{align*}
Therefore
\begin{align*}
	I_{l,\delta}^1 &\geq M_2R\int_Q \theta\psi\left(\delta^{2-\gamma}|\nabla z|^2+\frac{z^2}{\delta^{\gamma}}\right)\,dxdt - RM_1\int_Q \theta|\nabla z|^2\,dxdt - R\int_Q \alpha\Delta\sigma_{\delta}|\nabla z|^2\,dxdt 
	\\
	&- 2R\mu\int_Q \theta(\nabla\delta\cdot\nabla\psi)\frac{z^2}{\delta}\,dxdt - A_6R\int_Q \theta z^2\,dxdx.
\end{align*}
Since $\gamma>1$, for $r_0$ as in \eqref{r_0} we have 
\begin{align*}
	\frac{2|\mu||D\psi|_{\infty}}{\delta} \leq \frac{M_2}{2\delta^{\gamma}}, \;\;\; \forall x\in\Omega_{r_0};
\end{align*}
knowing this, we can finally conclude
\begin{align}\label{I_l 1 delta est}
	\nonumber I_{l,\delta}^1 &\geq B_1R\int_Q \theta\psi\left(\delta^{2-\gamma}|\nabla z|^2+\frac{z^2}{\delta^{\gamma}}\right)\,dxdt - RM_1\int_Q \theta|\nabla z|^2\,dxdt - R\int_Q \alpha\Delta\sigma_{\delta}|\nabla z|^2\,dxdt 
	\\
	&- A_6R\int_Q \theta z^2\,dxdx,
\end{align}
where $B_1:=M_2/2$.

\paragraph*{Estimates for $I_{l,\phi}^1$}
In order to get rid of the gradient terms with negative signs in \eqref{I_l 1 delta est}, we introduce the quantity
\begin{align}\label{T}
	\nonumber \mathcal{T} =& \, I_{l,\phi}^1 - R\int_Q \alpha\Delta\sigma_{\delta}|\nabla z|^2\,dxdt - RM_1\int_Q \theta|\nabla z|^2\,dxdt
	\\
	\nonumber =& -2R\int_Q D^2\sigma_{\phi}(\nabla z,\nabla z)\,dxdt - R\int_Q \alpha\Delta\sigma_{\phi}|\nabla z|^2\,dxdt + 2R\mu\int_Q (\nabla\delta\cdot\nabla\sigma_{\phi})\frac{z^2}{\delta^3}\,dxdt 
	\\
	&- R\int_Q \alpha\Delta\sigma_{\delta}|\nabla z|^2\,dxdt - RM_1\int_Q \theta|\nabla z|^2\,dxdt
\end{align}
and we need to estimate it from below. To do that, according to Propositions \ref{delta estimates} and \ref{phi estimates} we remark that
\begin{align*}
	& 2 D^2\tau_{\phi}(\nabla z,\nabla z) + \alpha\Delta\tau_{\phi}|\nabla z|^2 + \alpha\Delta\tau_{\delta}|\nabla z|^2 \geq \lambda\left(\frac{\delta}{r_0}\right)^{\lambda-2}\phi|\nabla z|^2, & \forall x\in\Omega_{r_0},
	\\
	& \left|2 D^2\tau_{\phi}(\nabla z,\nabla z) + \alpha\Delta\tau_{\phi}|\nabla z|^2 + (\alpha\Delta\tau_{\delta}-M_1)|\nabla z|^2\right| \leq M_2\lambda^2\left(\frac{\delta}{r_0}\right)^{\lambda}\phi|\nabla z|^2, & \forall x\in\omega_0,
	\\
	& 2 D^2\tau_{\phi}(\nabla z,\nabla z) + \alpha\Delta\tau_{\phi}|\nabla z|^2 + (\alpha\Delta\tau_{\delta}-M_1)|\nabla z|^2 \geq M_3 \lambda^2\left(\frac{\delta}{r_0}\right)^{\lambda}\phi|\nabla z|^2, & \forall x\in\mathcal{O},
\end{align*} 
for $\lambda$ large enough and for some positive constants $M_2$ and $M_3$ not depending on $\lambda$. On the other hand, there exists a positive constant $M_4$, again not depending on $\lambda$, such that it holds
\begin{align*}
	\left|\frac{2|\mu||(\nabla\delta\cdot\nabla\tau_{\phi})|}{\delta^3}\right| \leq M_4\lambda\left(\frac{\delta}{r_0}\right)^{\lambda-4}\phi, \;\;\;\; \forall x\in\Omega.
\end{align*}
Therefore it follows
\begin{align*}
	\mathcal{T} \geq \frac{\lambda R}{2}\intr{\Omega_{r_0}\times(0,T)} & \theta\left(\frac{\delta}{r_0}\right)^{\lambda-2}|\nabla z|^2\,dxdt - M_2\lambda^2R\intr{\omega_0\times(0,T)} \theta\left(\frac{\delta}{r_0}\right)^{\lambda}\phi|\nabla z|^2\,dxdt 
	\\
	&+ M_3\lambda^2R\intr{\mathcal{O}\times(0,T)} \theta\left(\frac{\delta}{r_0}\right)^{\lambda}\phi|\nabla z|^2\,dxdt - M_4\lambda R\int_Q \theta\left(\frac{\delta}{r_0}\right)^{\lambda-4}\phi z^2\,dxdt,
\end{align*}
for $\lambda$ large enough. Joining the two expression obtained for $I_{l,\delta}^1$ and $\mathcal{T}$ we finally have 
\begin{align}\label{I_l 1 est}
	\nonumber I_l^1 &\geq B_1R\int_Q \theta\psi\left(\delta^{2-\gamma}|\nabla z|^2+\frac{z^2}{\delta^{\gamma}}\right)\,dxdt  - A_6R\int_Q \theta z^2\,dxdx + \frac{\lambda R}{2}\intr{\Omega_{r_0}\times(0,T)} \theta\left(\frac{\delta}{r_0}\right)^{\lambda-2}|\nabla z|^2\,dxdt 
	\\
	& \nonumber - B_2\lambda^2R\intr{\omega_0\times(0,T)} \theta\left(\frac{\delta}{r_0}\right)^{\lambda}\phi|\nabla z|^2\,dxdt + B_3\lambda^2R\intr{\mathcal{O}\times(0,T)} \theta\left(\frac{\delta}{r_0}\right)^{\lambda}\phi|\nabla z|^2\,dxdt 
	\\
	&- M_5\lambda R\int_Q \theta\left(\frac{\delta}{r_0}\right)^{\lambda-4}\phi z^2\,dxdt,
\end{align}

\paragraph*{Estimates for $I_l^2$}
Using the fact that the support of $\alpha$ is located away from the origin, we note that
\begin{align*}
	\left|\alpha\frac{\Delta\tau_{\delta}}{\delta^2}\right|, \;\; \left|\alpha\frac{\Delta\tau_{\psi}}{\delta^2}\right|, \;\; |\Delta\alpha\Delta\tau_{\delta}|, \;\; |\Delta\alpha\Delta\tau_{\psi}|, \;\; |\nabla(\Delta\tau_{\delta})\cdot\nabla\alpha|, \;\; |\nabla(\Delta\tau_{\psi})\cdot\nabla\alpha|, \;\; |\Delta^2\tau_{\delta}| \leq A_{\lambda}, \;\;\; \forall x\in\Omega.
\end{align*}
Moreover, there exists a positive constant $\Upsilon$ such that
\begin{align*}
	|\Delta^2\tau_{\delta}(1+\alpha)| \leq \frac{2\Upsilon}{\delta^2}, \;\;\; \forall x\in\Omega.
\end{align*}
Hence
\begin{align*}
	I_l^2 \geq -A_{\lambda}R\int_Q \theta z^2\,dxdt - \Upsilon R\int_Q \theta|\nabla z|^2\,dxdt
\end{align*}
and, for $\lambda$ large enough, we finally have \eqref{I_l bound} with $B_{\lambda}:=A_{\lambda}+A_6+M_5\lambda\sup_{\in\Omega}\{(\delta/r_0)^{\lambda-4}\phi\}$.
\end{proof}

\begin{proof}[Proof of Lemma \ref{I_nl lemma}]
We split $I_{nl}=I_{nl,1}+I_{nl,2}$, where $I_{nl,1}$ indicates the integrals in $I_{nl}$ restricted to $\Omega_{r_0}$, while $I_{nl,2}$ are the terms in $I_{nl}$ restricted to $\tilde{\mathcal{O}}$. Moreover, if we put $\sigma=-\theta\tau$, then $I_{nl}$ can be rewritten as
\begin{align*}
	I_{nl} &= 2R^3\int_Q \theta^3 D^2\tau(\nabla\tau,\nabla\tau)z^2\,dxdt - R^3\int_Q \theta^3\alpha\Delta\tau|\nabla\tau|^2z^2\,dxdt - \frac{R^2}{2}\int_Q \theta^2\alpha^2|\Delta\tau|^2z^2\,dxdt.
\end{align*}

\paragraph*{Computations for $I_{nl,1}$} 
From \eqref{T2 est 1}, \eqref{T3 est 1} and \eqref{grad tau est 1}, for any $x\in\Omega_{r_0}$ we have 
\begin{align*}
	T_2 + T_3 &\geq \lambda^2\left(\frac{\phi}{r_0^{\lambda}}\delta^{\lambda-2}+\left(\frac{\delta}{r_0}\right)^{\lambda}\phi\right)|\nabla\tau|^2-\lambda^2\frac{\phi}{r_0^{\lambda}}|D\psi|_{\infty}^2\left(D_3\psi^2+D_4\right)\delta^{\lambda+2}
	\\
	& = \lambda^2\frac{\phi}{r_0^{\lambda}}\delta^{\lambda-2} \left(|\nabla\tau|^2 + \delta^2 |\nabla\tau|^2
	-|D\psi|_{\infty}^2\left(D_3\psi^2+D_4\right)\delta^4\right)
	\\
	& \geq \lambda^2\frac{\phi}{r_0^{\lambda}}\delta^{\lambda} \left(1 	-|D\psi|_{\infty}^2\left(D_3\psi^2+D_4\right)\delta^2\right) \geq \lambda^2\frac{\phi}{r_0^{\lambda}}\delta^{\lambda} \left(1 	-|D\psi|_{\infty}^2\left(D_3\psi^2+D_4\right)r_0^2\right) \geq 0
\end{align*}
for $r_0$ as in \eqref{r_0}. Hence, using \eqref{T1 est 1} and \eqref{grad tau est 1} we conclude
\begin{align*}
	2D^2\tau(\nabla\tau,\nabla\tau)-\alpha\Delta\tau|\nabla\tau|^2\geq \delta^2, \;\;\; \forall x\in\Omega_{r_0};
\end{align*}
as a consequence,
\begin{align*}
	I_{nl,1} \geq R^3\intr{\Omega_{r_0}\times(0,T)} \theta^3\delta^2z^2\,dxdt - \frac{R^2}{2}\intr{\Omega_{r_0}\times(0,T)} \theta^2\alpha^2|\Delta\tau|^2z^2\,dxdt.
\end{align*}
Moreover, since $\alpha$ is supported away from the boundary we also have 
\begin{align*}
	\alpha^2|\Delta\tau|^2 \leq A'_{\lambda}\delta^2, \;\;\; \forall x\in\Omega_{r_0};
\end{align*}
hence, finally, there exists $R_0=R_0(\lambda)$ large enough such that, for any $R\geq R_0$
\begin{align*}
	I_{nl,1} \geq \frac{R^3}{2}\intr{\Omega_{r_0}\times(0,T)} \theta^3\delta^2z^2\,dxdt.
\end{align*}

\paragraph*{Computations for $I_{nl,2}$}
According to Propositions \ref{T1 estimates}, \ref{T2 estimates} and \ref{T3 estimates} and to \eqref{grad tau est 2}, for all $x\in\mathcal{O}$ we have 
\begin{align*}
	2D^2\tau(\nabla\tau,\nabla\tau)-\alpha\Delta\tau|\nabla\tau|^2 \geq G_1\lambda^2\left(\frac{\delta}{r_0}\right)^{\lambda}\phi|\nabla\tau|^2 \geq G_1\lambda^4\left(\frac{\delta}{r_0}\right)^{3\lambda}\phi^3.
\end{align*}
In addition, it holds
\begin{align*}
	& \alpha^2|\Delta\tau|^2 \leq G_2\lambda^4\left(\frac{\delta}{r_0}\right)^{2\lambda}\phi^2, & & \forall x\in\tilde{\mathcal{O}},
	\\
	& \left|2D^2\tau(\nabla\tau,\nabla\tau)-\alpha\Delta\tau|\nabla\tau|^2\right| \leq G_3\lambda^2\left(\frac{\delta}{r_0}\right)^{\lambda}\phi|\nabla\tau|^2 \leq G_4\lambda^4\left(\frac{\delta}{r_0}\right)^{3\lambda}\phi^3, & & \forall x\in\omega_0.
\end{align*}
\indent The previous inequalities follows from \eqref{delta tau}, \eqref{d tau} and \eqref{grad tau est 3}; the constants $G_1$, $G_2$, $G_3$ and $G_4$ are all positive and independent on $\lambda$. Therefore we obtain
\begin{align*}
	I_{nl,2} &\geq G_1\lambda^4R^3\intr{\mathcal{O}\times(0,T)} \theta^3\left(\frac{\delta}{r_0}\right)^{3\lambda}\phi^3z^2\,dxdt - G_4\lambda^4R^3\intr{\omega_0\times(0,T)} \theta^3\left(\frac{\delta}{r_0}\right)^{3\lambda}\phi^3z^2\,dxdt 
	\\
	&- \frac{G_2}{2}\lambda^4R^2\intr{\tilde{\mathcal{O}}\times(0,T)} \theta^2\left(\frac{\delta}{r_0}\right)^{2\lambda}\phi^2\,dxdt.
\end{align*}
\indent Joining now the two expressions we get for $I_{nl.1}$ and $I_{nl,2}$, we finally obtain that there exists $R_0=R_0(\lambda)$ large enough such that for $R\geq R_0$ 
\begin{align*}
	I_{nl} &\geq \frac{R^3}{2}\intr{\Omega_{r_0}\times(0,T)} \theta^3\delta^2z^2\,dxdt + G_5\lambda^4R^3\intr{\mathcal{O}\times(0,T)} \theta^3\left(\frac{\delta}{r_0}\right)^{3\lambda}\phi^3z^2\,dxdt 
	\\
	&- G_6\lambda^4R^3\intr{\omega_0\times(0,T)} \theta^3\left(\frac{\delta}{r_0}\right)^{3\lambda}\phi^3z^2\,dxdt, 
\end{align*}
where $G_5:=G_1/2$ and $G_6:=G_2/2+G_4$.
\end{proof}

\begin{proof}[Proof of Lemma \ref{I_r lemma}]
According to the expression of $\theta$, there exists a constant $\varsigma>0$ such that
\begin{align*}
	|\theta\theta_t|\leq\varsigma\theta^{\,3}, \;\;\;\; |\theta_{tt}|\leq\varsigma\theta^{\,5/3};
\end{align*}
on the other hand, from the definition of $\sigma$ we obtain
\begin{align}\label{sigma time est}
	\nonumber & |\Delta\sigma| \leq E_{\lambda}\theta, \;\;\; |\sigma_t| \leq E_{\lambda}\theta_t, & & \forall x\in\Omega,
	\\
	\nonumber & \partial_t\left(|\nabla\sigma|^2\right) \leq E_{\lambda}\theta\theta_t\delta^2, & & \forall x\in\Omega_{r_0},
	\\
	& \partial_t\left(|\nabla\sigma|^2\right) \leq E_{\lambda}\theta\theta_t\left(\frac{\delta}{r_0}\right)^{2\lambda}\phi^2 & & \forall x\in\tilde{\mathcal{O}},
\end{align}
for some positive constant $E_{\lambda}$ big enough. 
\\
Since $\alpha$ is supported away from the boundary, we can write 
\begin{align*}
	R^2\int_Q \left|\alpha\sigma_t\Delta\sigma z^2\right|\,dxdt \leq \frac{\varsigma E_{\lambda}^2}{r_0^2}R^2\intr{\Omega_{r_0}\times(0,T)} \theta^{\,3}\delta^2z^2\,dxdt + \varsigma E_{\lambda}^2R^2\intr{\tilde{\mathcal{O}}\times(0,T)} \theta^{\,3}\delta^2z^2\,dxdt.
\end{align*}
Furthermore, from \eqref{sigma time est} we obtain
\begin{align*}
	R^2\left|\int_Q \partial_t\left(|\nabla\sigma|^2\right)z^2\,dxdt\,\right| \leq \varsigma E_{\lambda}R^2 \intr{\Omega_{r_0}\times(0,T)} \theta^{\,3}\delta^2z^2\,dxdt + \varsigma E_{\lambda}R^2\intr{\tilde{\mathcal{O}}\times(0,T)} \theta^{\,3}\left(\frac{\delta}{r_0}\right)^{2\lambda}\phi^2z^2\,dxdt.
\end{align*}
Now we define
\begin{align*}
	\Theta:= -\frac{R}{2}\int_Q \sigma_{tt}z^2\,dxdt - B_{\lambda}R\int_Q \theta z^2\,dxdt,
\end{align*}
where $B_{\lambda}$ is the same introduced in Lemma \ref{I_l lemma}. It is straightforward that there exists a positive constant $F_{\lambda}$ such that
\begin{align*}
	|\Theta| \leq 2F_{\lambda}R\int_Q \theta^{\,5/3}z^2\,dxdt.
\end{align*}
Next, for $1<q,\,q'<\infty$ such that $1/q+1/q'=1$ and $\ell>0$ we can write 
\begin{align*}
	\int_Q \theta^{\,5/3}z^2\,dxdt = \int_Q \left(\ell\theta^{\,5/3-1/q'}\delta^{1/q'}z^{2/q}\right)\left(\frac{1}{\ell}\theta^{1/q'}\delta^{-1/q'}z^{2/q'}\right)\,dxdt;
\end{align*}
choosing $q=3$ and $q'=3/2$ in the previous expression, and using Young's inequality, we obtain
\begin{align*}
	\int_Q \theta^{\,5/3}z^2\,dxdt \leq \frac{\ell^3}{3}\int_Q \theta^3\delta^2z^2\,dxdt + \frac{2R_{\Omega}^{\gamma-1}}{3\ell^{\,3/2}}\int_Q \theta\frac{z^2}{\delta^{\gamma}}\,dxdt,
\end{align*}
for some positive parameter $\gamma\in (1,2)$. Therefore we have
\begin{align*}
	|\Theta| \leq 2F_{\lambda}R\left( \frac{\ell^3}{3}\int_Q \theta^3\delta^2z^2\,dxdt + \frac{2R_{\Omega}^{\gamma-1}}{3\ell^{\,3/2}}\int_Q \theta\frac{z^2}{\delta^{\gamma}}\,dxdt\right).
\end{align*}
Consequently, it follows that
\begin{align*}
	|I_r| &\leq G_{\lambda}\left(R^2\intr{\Omega_{r_0}\times(0,T)} \theta^3\delta^2z^2\,dxdt \right.
	\\
	&\left. + \ell^3 R\int_Q \theta^3\delta^2z^2\,dxdt + \frac{R}{\ell^{\,3/2}}\int_Q \theta\frac{z^2}{\delta^{\gamma}}\,dxdt + R^2\intr{\tilde{\mathcal{O}}} \theta^3\left(\frac{\delta}{r_0}\right)^{2\lambda}z^2\,dxdt\right),
\end{align*}
for some new constant $G_{\lambda}>0$. Take now $\ell$ such that $G_{\lambda}/\ell^{\,3/2}=B_1/2$; then there exists $R_0=R_0(\lambda)$ such that for any $R\geq R_0$ \eqref{I_r bound} holds. 
\\
\indent We conclude pointing out that, if we choose an exponent $k<3$ for the function $\theta$ in the definition of our weight $\sigma$ (see Section \ref{carleman sec}), it is straightforward to check that some of the passages in the computations above are not true anymore and there are terms in the expression $I_r$ that we are not able to handle. Therefore, the value $k=3$ turns out to be sharp for obtaining our Carleman inequality.
\end{proof}

\section{Proof of the Propositions of Section \ref{h-p_ineq}}

\begin{proof}[Proof of Proposition \ref{hardy 2 prop}]
We split the proof in two parts: firstly, we derive \eqref{hardy 2} in $\Omega_{r_0}$ and, in a second moment, we extend the result to the whole $\Omega$.
\paragraph*{Step 1. inequality on $\Omega_{r_0}$}
Let us consider a smooth function $\phi>0$ which satisfies 
\begin{align}\label{phi condition}
	& -\Delta\phi \geq \mu\frac{\phi}{\delta^2}+\phi^p, \;\;\; \forall p\in\left[1,\frac{N-k+2}{N-k-2}\right),
\end{align}
for $k\in(1,N-2)$. According to \citep{fall2013nonexistence}, for $\delta<1$ the function 
\begin{align}\label{phi expr}
	\delta^{\,-A_k^{1/2}\left(1-\delta^{1/2}\right)}\left(1+\frac{1}{\log\delta}\right), \;\;\; A_k:=\left(\frac{N-k-2}{2}\right)^2
\end{align}
satisfies \eqref{phi condition}. Hence, for any $x\in\Omega_{r_0}$ with $r_0\leq 1$ we define $v:=\phi z$ for $z\in C_0^{\infty}(\Omega_{r_0})$; in particular, $v\in C_0^{\infty}(\Omega_{r_0})$ and
\begin{align*}
	|\nabla v|^2=\phi^2|\nabla z|^2+z^2|\nabla\phi|^2+\frac{1}{2}\nabla(\phi^2)\cdot\nabla(z^2).
\end{align*}
By applying integration by parts, it is simply a matter of computations to show
\begin{align*}
	\int_{\Omega_{r_0}} |\nabla v|^2\,dx = \int_{\Omega_{r_0}} \phi^2|\nabla z|^2\,dx - \int_{\Omega_{r_0}} \frac{\Delta\phi}{\phi}v^2\,dx
\end{align*}
and
\begin{align*}
	\frac{1}{2}\int_{\Omega_{r_0}} \delta^{2-\gamma}\nabla(\phi^2)\cdot\nabla(z^2)\,dx &= -(2-\gamma)\int_{\Omega_{r_0}} \delta^{1-\gamma}\frac{\nabla\phi\cdot\nabla\delta}{\phi}v^2\,dx - \int_{\Omega_{r_0}} \delta^{2-\gamma}\frac{\Delta\phi}{\phi}v^2\,dx 
	\\
	&- \int_{\Omega_{r_0}}\delta^{2-\gamma}|\nabla\phi|^2z^2\,dx.
\end{align*}
The two identities above implies 
\begin{align*}
	\int_{\Omega_{r_0}} \delta^{2-\gamma}\phi^2|\nabla z|^2\,dx &\leq R_{\Omega}^{2-\gamma}\int_{\Omega_{r_0}} \phi^2|\nabla z|^2\,dx = R_{\Omega}^{2-\gamma}\int_{\Omega_{r_0}} \left(|\nabla v|^2 + \frac{\Delta\phi}{\phi}v^2\right)\,dx 
	\\
	& \leq R_{\Omega}^{2-\gamma}\int_{\Omega_{r_0}} \left(|\nabla v|^2 - \mu\frac{v^2}{\delta^2}\right)\,dx - R_{\Omega}^{2-\gamma}\int_{\Omega_{r_0}} \phi^{p-1}v^2\,dx
\end{align*}
and 
\begin{align*}
	\int_{\Omega_{r_0}} \delta^{2-\gamma}|\nabla v|^2\,dx &= \int_{\Omega_{r_0}} \delta^{2-\gamma}\phi^2|\nabla z|^2\,dx - (2-\gamma)\int_{\Omega_{r_0}} \delta^{1-\gamma}\frac{\nabla\phi\cdot\nabla\delta}{\phi}v^2\,dx - \int_{\Omega_{r_0}} \delta^{2-\gamma}\frac{\Delta\phi}{\phi}v^2\,dx; 
\end{align*}
hence
\begin{align*}
	\int_{\Omega_{r_0}} \delta^{2-\gamma}|\nabla v|^2\,dx &\leq R_{\Omega}^{2-\gamma}\int_{\Omega_{r_0}} \left(|\nabla v|^2 - \mu\frac{v^2}{\delta^2}\right)\,dx - R_{\Omega}^{2-\gamma}\int_{\Omega_{r_0}} \phi^{p-1}v^2\,dx + \mu\int_{\Omega_{r_0}} \delta^{2-\gamma}\frac{v^2}{\delta^2}\,dx
	\\
	&+ \int_{\Omega_{r_0}} \delta^{2-\gamma}\phi^{p-1}v^2\,dx - (2-\gamma)\int_{\Omega_{r_0}} \delta^{1-\gamma}\frac{\nabla\phi\cdot\nabla\delta}{\phi}v^2\,dx.
\end{align*}
Now, again by integration by parts we have
\begin{align*}
	- (2-\gamma)\int_{\Omega_{r_0}} &\delta^{1-\gamma}\frac{\nabla\phi\cdot\nabla\delta}{\phi}v^2\,dx 
	\\
	&=  \int_{\Omega_{r_0}} \delta^{2-\gamma}\frac{\Delta\phi}{\phi}v^2\,dx - \int_{\Omega_{r_0}} \frac{\delta^{2-\gamma}}{\phi^2}|\nabla\phi|^2v^2\,dx + 2\int_{\Omega_{r_0}} \delta^{2-\gamma}\frac{\nabla\phi\cdot\nabla v}{\phi}v\,dx
	\\
	& \leq -\mu\int_{\Omega_{r_0}} \delta^{2-\gamma}\frac{v^2}{\delta^2}\,dx - \int_{\Omega_{r_0}} \delta^{2-\gamma}\phi^{p-1}v^2\,dx + 2\int_{\Omega_{r_0}} \delta^{2-\gamma}\frac{\nabla\phi\cdot\nabla v}{\phi}v\,dx;
\end{align*}
therefore
\begin{align*}
	\int_{\Omega_{r_0}} \delta^{2-\gamma}&|\nabla v|^2\,dx 
	\\
	&\leq R_{\Omega}^{2-\gamma}\int_{\Omega_{r_0}} \left(|\nabla v|^2 - \mu\frac{v^2}{\delta^2}\right)\,dx - R_{\Omega}^{2-\gamma}\int_{\Omega_{r_0}} \phi^{p-1}v^2\,dx + 2\int_{\Omega_{r_0}} \delta^{2-\gamma}\frac{\nabla\phi\cdot\nabla v}{\phi}v\,dx
	\\
	&\leq R_{\Omega}^{2-\gamma}\int_{\Omega_{r_0}} \left(|\nabla v|^2 - \mu\frac{v^2}{\delta^2}\right)\,dx + P_1 \int_{\Omega_{r_0}} \phi^{p-1}v^2\,dx + 2\int_{\Omega_{r_0}} \delta^{2-\gamma}\frac{\nabla\phi\cdot\nabla v}{\phi}v\,dx
	\\
	&\leq R_{\Omega}^{2-\gamma}\int_{\Omega_{r_0}} \left(|\nabla v|^2 - \mu\frac{v^2}{\delta^2}\right)\,dx + P_2 \int_{\Omega_{r_0}} v^2\,dx + 2\int_{\Omega_{r_0}} \delta^{2-\gamma}\frac{\nabla\phi\cdot\nabla v}{\phi}v\,dx.
\end{align*}
By definition of $\phi$ we have 
\begin{align*}
	\frac{\nabla\phi\cdot\nabla v}{\phi} = \left(1+\frac{1}{\log\delta}\right)^{-1}\left(\frac{A_k^{1/2}}{2}\frac{\log\delta}{\delta^{1/2}}-A_k^{1/2}\frac{1-\delta^{1/2}}{\delta}-\frac{1}{\delta\log^2\delta}\right)(\nabla\delta\cdot\nabla v);
\end{align*}
plugging this expression  in the inequality above we immediately get
\begin{align*}
	\int_{\Omega_{r_0}} \delta^{2-\gamma}|\nabla v|^2dx &\leq R_{\Omega}^{2-\gamma}\int_{\Omega_{r_0}} \left(|\nabla v|^2 - \mu\frac{v^2}{\delta^2}\right)dx + P_2\int_{\Omega_{r_0}} v^2dx + P_3\int_{\Omega_{r_0}} \delta^{2-\gamma}\frac{\log\delta}{\delta^{1/2}}(\nabla\delta\cdot\nabla v)v\,dx
\end{align*}
with 
\begin{align*}
	P_3:=A_k^{1/2}\left|\sup_{x\in\Omega_{r_0}} \left(1+\frac{1}{\log\delta}\right)^{-1}\,\right|.
\end{align*}
Now, using another time integration by parts, and since $\log\delta<\delta^{3/2}$, we finally obtain 
\begin{align*}
	\int_{\Omega_{r_0}} \delta^{2-\gamma}|\nabla v|^2\,dx &\leq R_{\Omega}^{2-\gamma}\int_{\Omega_{r_0}} \left(|\nabla v|^2 - \mu\frac{v^2}{\delta^2}\right)\,dx + P_2\int_{\Omega_{r_0}} v^2\,dx + P_3\int_{\Omega_{r_0}} \delta^{3-\gamma}(\nabla\delta\cdot\nabla(v^2))\,dx
	\\
	&\leq R_{\Omega}^{2-\gamma}\int_{\Omega_{r_0}} \left(|\nabla v|^2 - \mu\frac{v^2}{\delta^2}\right)\,dx + A_2\int_{\Omega_{r_0}} v^2\,dx, 
\end{align*}
where 
\begin{align*}
	A_2:= P_2+P_3\left[R_{\Omega}^{2-\gamma}(3-\gamma)+R_{\Omega}^{3-\gamma}|\Delta\delta|\right].
\end{align*}

\paragraph*{Step 2. inequality on $\Omega$}
We apply a cut-off argument to recover the validity of the inequality on the whole $\Omega$. More in details, we consider a function $\psi\in C_0^{\infty}(\RR^N)$ such that 
\begin{align*}
	\psi(x)=\left\{\begin{array}{ll}
		1, & \forall x\in\Omega_{r_0/2},
		\\
		0, & \forall x\in\Omega\setminus\Omega_{r_0}
	\end{array}\right.
\end{align*}
and we split $v\in C_0^{\infty}(\Omega)$ as $v=\psi v + (1-\psi)v:=v_1+v_2$. Thus, we get
\begin{align*}
	\int_{\Omega} \delta^{2-\gamma}|\nabla v|^2\,dx &= \int_{\Omega_{r_0}} \delta^{2-\gamma}|\nabla v_1|^2\,dx + \intr{\Omega\setminus\Omega_{r_0/2}} \delta^{2-\gamma}|\nabla v_2|^2\,dx + 2\intr{\Omega_{r_0}\setminus\Omega_{r_0/2}} \delta^{2-\gamma}(\nabla v_1\cdot\nabla v_2)\,dx. 
\end{align*}
Applying \eqref{hardy 2} to the previous identity we obtain
\begin{align*}
	\int_{\Omega} \delta^{2-\gamma}|\nabla v|^2\,dx &\leq R_{\Omega}^{2-\gamma}\left(\int_{\Omega} |\nabla v|^2\,dx - \mu\int_{\Omega_{r_0}}\frac{v^2}{\delta^2}\,dx\right) - \intr{\Omega_{r_0}\setminus\Omega_{r_0/2}} 2\left(R_{\Omega}^{2-\gamma}-\delta^{2-\gamma}\right)(\nabla v_1\cdot\nabla v_2)\,dx
	\\
	&+ J_1\int_{\Omega} v^2\,dx.
\end{align*}
\indent As shown in \cite[Lemma 5.1]{cazacu2012schrodinger}, for a smooth function $q: C^{\infty}(\Omega)\to\RR$ which is bounded and non-negative, there exists a constant $C>0$ depending on $\Omega$ and $q$ such that it holds
\begin{align}\label{weight ineq}
	\int_{\Omega} q(x)(\nabla v_1\cdot\nabla v_2)\,dx \geq -C\int_{\Omega} v^2\,dx;
\end{align}
hence, considering \eqref{weight ineq} with 
\begin{align*}
	q=\left. 2\left(R_{\Omega}^{2-\gamma}-\delta^{2-\gamma}\right)\,\right|_{\Omega_{r_0}\setminus\Omega_{r_0/2}}
\end{align*}
we get
\begin{align}\label{hardy 2 prel}
	\int_{\Omega} \delta^{2-\gamma}|\nabla v|^2\,dx &\leq R_{\Omega}^{2-\gamma}\left(\int_{\Omega} |\nabla v|^2\,dx - \mu\int_{\Omega_{r_0}}\frac{v^2}{\delta^2}\,dx\right) + J_2\int_{\Omega} v^2\,dx.
\end{align}
On the other hand we have
\begin{align*}
	\int_{\Omega_{r_0}}\frac{v^2}{\delta^2}\,dx \geq \int_{\Omega}\frac{v^2}{\delta^2}\,dx - J_3\int_{\Omega} v^2\,dx.
\end{align*}
Plugging this last inequality in \eqref{hardy 2 prel}, we finally obtain \eqref{hardy 2}.
\end{proof}
\section*{Acknowledgements} The authors wish to thank Prof. Mahamadi Warma (University of Puerto Rico) and Dr. Cristian Cazacu (University Politehnica of Bucharest) for fruitful discussions that led to this work. 
\section*{Bibliography}
\bibliography{biblio}

\begin{thebibliography}{10}

\bibitem{baras1984heat}
P.~Baras and J.~A. Goldstein.
\newblock The heat equation with a singular potential.
\newblock {\em Trans. Amer. Math. Soc.}, 284(1):121--139, 1984.

\bibitem{berestycki1997existence}
H.~Berestycki and M.~J. Esteban.
\newblock Existence and bifurcation of solutions for an elliptic degenerate
  problem.
\newblock {\em J. Differential Equations}, 134(1):1--25, 1997.

\bibitem{biccari2015boundary}
U.~Biccari.
\newblock Boundary controllability for a one-dimensional heat equation with two
  singular inverse-square potentials.
\newblock {\em arXiv preprint arXiv:1509.05178}, 2015.

\bibitem{brezis2010functional}
H.~Brezis.
\newblock {\em Functional analysis, {S}obolev spaces and partial differential
  equations}.
\newblock Springer Science \& Business Media, 2010.

\bibitem{brezis1997hardy}
H.~Brezis and M.~Marcus.
\newblock Hardy's inequalities revisited.
\newblock {\em Ann. Sc. Nor. Sup. Pisa Cl. Sci. (4)}, 25(1-2):217--237, 1997.

\bibitem{cabre1999existence}
X.~Cabr{\'e} and Y.~Martel.
\newblock Existence versus explosion instantan{\'e}e pour des {\'e}quations de
  la chaleur lin{\'e}aires avec potentiel singulier.
\newblock {\em C. R. Math. Acad. Sci. Paris}, 329(11):973--978, 1999.

\bibitem{cazacu2012schrodinger}
C.~Cazacu.
\newblock Schr\"odinger operators with boundary singularities: {H}ardy
  inequality, {P}ohozaev identity and controllability results.
\newblock {\em J. Funct. Anal.}, 263(12):3741--3783, 2012.

\bibitem{cazacu2014controllability}
C.~Cazacu.
\newblock Controllability of the heat equation with an inverse-square potential
  localized on the boundary.
\newblock {\em SIAM J. Control Optim.}, 52(4):2055--2089, 2014.

\bibitem{ervedoza2008control}
S.~Ervedoza.
\newblock Control and stabilization properties for a singular heat equation
  with an inverse-square potential.
\newblock {\em Comm. Part. Diff. Eq.}, 33(11):1996--2019, 2008.

\bibitem{ervedoza2010systematic}
S.~Ervedoza and E.~Zuazua.
\newblock A systematic method for building smooth controls for smooth data.
\newblock {\em Discrete Contin. Dyn. Syst. Ser. B}, 14(4):1375--1401, 2010.

\bibitem{fall2013nonexistence}
M.~M. Fall.
\newblock Nonexistence of distributional supersolutions of a semilinear
  elliptic equation with hardy potential.
\newblock {\em J. Funct. Anal.}, 264(3):661--690, 2013.

\bibitem{fursikov1996controllability}
A.~V. Fursikov and O.~Y. Imanuvilov.
\newblock Controllability of evolution equations, {L}ect. {N}ote {S}eries 34.
\newblock {\em Res. Inst. Math., GARC, Seoul National University}, 1996.

\bibitem{azorero1998hardy}
J.~García~Azorero and I.~Peral~Alonso.
\newblock Hardy inequalities and some critical elliptic and parabolic problems.
\newblock {\em J. Differential Equations}, 144(2):441--476, 1998.

\bibitem{giri2008electron}
P.~R. Giri, K.~S. Gupta, S.~Meljanac, and A.~Samsarov.
\newblock Electron capture and scaling anomaly in polar molecules.
\newblock {\em Phys. Lett. A}, 372(17):2967--2970, 2008.

\bibitem{lions1988controlabilite}
J.~L. Lions.
\newblock {\em Contr\^olabilit\'e exacte perturbations et stabilisation de
  syst\`emes distribu\'es({T}ome 1, {C}ontr\^olabilit\'e exacte. {T}ome 2,
  {P}erturbations)}.
\newblock Recherches en mathematiques appliqu{\'e}es, Masson, 1988.

\bibitem{marcus1998best}
M.~Marcus, V.~Mizel, and Y.~Pinchover.
\newblock On the best constant for hardy’s inequality in $\mathbb{R}^n$.
\newblock {\em Trans. Amer. Math. Soc.}, 350(8):3237--3255, 1998.

\bibitem{miller2006control}
L.~Miller.
\newblock The control transmutation method and the cost of fast controls.
\newblock {\em SIAM J. Control Optim.}, 45(2):762--772, 2006.

\bibitem{vancostenoble2008null}
J.~Vancostenoble and E.~Zuazua.
\newblock Null controllability for the heat equation with singular
  inverse-square potentials.
\newblock {\em J. Funct. Anal.}, 254(7):1864--1902, 2008.

\bibitem{vancostenoble2009hardy}
J.~Vancostenoble and E.~Zuazua.
\newblock Hardy inequalities, observability, and control for the wave and
  {S}chr\"odinger equations with singular potentials.
\newblock {\em SIAM J. Math. Anal.}, 41(4):1508--1532, 2009.

\bibitem{vazquez2000hardy}
J.~L. Vazquez and E.~Zuazua.
\newblock The {H}ardy inequality and the asymptotic behaviour of the heat
  equation with an inverse-square potential.
\newblock {\em J. Funct. Anal.}, 173(1):103--153, 2000.

\end{thebibliography}

\end{document}